\numberwithin{equation}{section}
\begin{document}
	\theoremstyle{plain}
	\newtheorem{thm}{Theorem}[section]
	\newtheorem{lem}[thm]{Lemma}
	\newtheorem{cor}[thm]{Corollary}
	\newtheorem{cor*}[thm]{Corollary*}
	\newtheorem{prop}[thm]{Proposition}
	\newtheorem{prop*}[thm]{Proposition*}
	\newtheorem{conj}[thm]{Conjecture}
	\theoremstyle{definition}
	\newtheorem{construction}{Construction}
	\newtheorem{notations}[thm]{Notations}
	\newtheorem{question}[thm]{Question}
	\newtheorem{prob}[thm]{Problem}
	\newtheorem{rmk}[thm]{Remark}
	\newtheorem{remarks}[thm]{Remarks}
	\newtheorem{defn}[thm]{Definition}
	\newtheorem{claim}[thm]{Claim}
	\newtheorem{assumption}[thm]{Assumption}
	\newtheorem{assumptions}[thm]{Assumptions}
	\newtheorem{properties}[thm]{Properties}
	\newtheorem{exmp}[thm]{Example}
	\newtheorem{comments}[thm]{Comments}
	\newtheorem{blank}[thm]{}
	\newtheorem{observation}[thm]{Observation}
	\newtheorem{defn-thm}[thm]{Definition-Theorem}
	\newtheorem*{Setting}{Setting}

	\newcommand{\sA}{\mathscr{A}}
	\newcommand{\sB}{\mathscr{B}}
	\newcommand{\sC}{\mathscr{C}}
	\newcommand{\sD}{\mathscr{D}}
	\newcommand{\sE}{\mathscr{E}}
	\newcommand{\sF}{\mathscr{F}}
	\newcommand{\sG}{\mathscr{G}}
	\newcommand{\sH}{\mathscr{H}}
	\newcommand{\sI}{\mathscr{I}}
	\newcommand{\sJ}{\mathscr{J}}
	\newcommand{\sK}{\mathscr{K}}
	\newcommand{\sL}{\mathscr{L}}
	\newcommand{\sM}{\mathscr{M}}
	\newcommand{\sN}{\mathscr{N}}
	\newcommand{\sO}{\mathscr{O}}
	\newcommand{\sP}{\mathscr{P}}
	\newcommand{\sQ}{\mathscr{Q}}
	\newcommand{\sR}{\mathscr{R}}
	\newcommand{\sS}{\mathscr{S}}
	\newcommand{\sT}{\mathscr{T}}
	\newcommand{\sU}{\mathscr{U}}
	\newcommand{\sV}{\mathscr{V}}
	\newcommand{\sW}{\mathscr{W}}
	\newcommand{\sX}{\mathscr{X}}
	\newcommand{\sY}{\mathscr{Y}}
	\newcommand{\sZ}{\mathscr{Z}}
	\newcommand{\bZ}{\mathbb{Z}}
	\newcommand{\bN}{\mathbb{N}}
	\newcommand{\bQ}{\mathbb{Q}}
	\newcommand{\bC}{\mathbb{C}}
	\newcommand{\bR}{\mathbb{R}}
	\newcommand{\bH}{\mathbb{H}}
	\newcommand{\bD}{\mathbb{D}}
	\newcommand{\bE}{\mathbb{E}}
	\newcommand{\bP}{\mathbb{P}}
	\newcommand{\bV}{\mathbb{V}}
	\newcommand{\cV}{\mathcal{V}}
	\newcommand{\cF}{\mathcal{F}}
	\newcommand{\bfM}{\mathbf{M}}
	\newcommand{\bfN}{\mathbf{N}}
	\newcommand{\bfX}{\mathbf{X}}
	\newcommand{\bfY}{\mathbf{Y}}
	\newcommand{\spec}{\textrm{Spec}}
	\newcommand{\dbar}{\bar{\partial}}
	\newcommand{\ddbar}{\partial\bar{\partial}}
	\newcommand{\redref}{{\color{red}ref}}
	
	\title[Hyperbolicity of the base of an admissible family] {Hyperbolicity of the base of an admissible family of log canonical stable minimal models}
	
	\author[Junchao Shentu]{Junchao Shentu}
	\email{stjc@ustc.edu.cn}
	\address{School of Mathematical Sciences,
		University of Science and Technology of China, Hefei, 230026, China}
	\author[Chen Zhao]{Chen Zhao}
	\email{czhao@ustc.edu.cn}
	\address{School of Mathematical Sciences,
		University of Science and Technology of China, Hefei, 230026, China}

	\begin{abstract}
		We investigate the stratified hyperbolicity properties of Birkar's moduli stack of log canonical (lc) stable minimal models. The main technical result is a construction of Viehweg-Zuo's system of Higgs sheaves associated with an admissible family of lc stable minimal models, using the theory of degenerations of Hodge structure and non-abelian Hodge theory.
	\end{abstract}
	
	\maketitle
	
	\section{Introduction}
	The construction of compact moduli spaces of varieties is a fundamental problem in algebraic geometry. Since the seminal work of Deligne-Mumford \cite{Deligne1969} on the moduli of stable curves, a concerted effort has been made to generalize the construction of compact moduli spaces to higher dimensional varieties, with the aim of providing a framework for studying families of algebraic varieties with prescribed properties. These efforts have led to the development of a rich and diverse theory of moduli spaces, which has found applications in a wide range of areas in algebraic geometry and related fields. Known examples include the meaningful compactifications of the moduli spaces of polarized abelian varieties \cite{Alexeev2002}, plane curves \cite{Hacking2004},  manifolds of general type \cite{Kollar1988,Kollar2010,Kollar2018}, polarized Calabi-Yau manifolds \cite{KX2020} and $K$-polystable Fano manifolds \cite{Xu2020}. 
	Given $d\in\bN,c\in\bQ^{\geq0}$, a finite set $\Gamma\subset\bQ^{>0}$ and $\sigma\in\bQ[t]$, Birkar \cite{Birkar2022} introduced the notion of a $(d,\Phi_c,\Gamma,\sigma)$-stable minimal model as a triple $(X,B),A$, which consists of a variety $X$ over ${\rm Spec}(\bC)$ and $\bQ$-divisors $A\geq0$, $B\geq 0$ on $X$ satisfying the following conditions:
	\begin{itemize}
		\item $\dim X=d$, $(X,B)$ is an slc projective pair, $K_X+B$ is semi-ample,
		\item the coefficients of $A$ and $B$ are in $c\bZ^{\geq0}$,
		\item $(X,B+tA)$ is slc, $K_X+B+tA$ is ample for some $t>0$,
		\item ${\rm vol}(K_X+B+tA)=\sigma(t)$ for $0\leq t\ll 1$, and
		\item ${\rm vol}(A|_F)\in\Gamma$ where $F$ is any general fiber of the fibration $f:X\to Z$ determined by $K_X+B$.
	\end{itemize}
	The moduli stack $\sM_{\rm slc}(d,\Phi_c,\Gamma,\sigma)$ of $(d,\Phi_c,\Gamma,\sigma)$-stable minimal models (see \cite{Birkar2022} or \S \ref{section_moduli}) admits a projective good coarse moduli space (\cite[Theorem 1.14]{Birkar2022}) and provides a meaningful compactification of the moduli of birational equivalence classes of projective manifolds of an arbitrary Kodaira dimension. In the present article, we study the global geometry of the open substack $\sM_{{\rm lc},[0,1)}(d,\Phi_c,\Gamma,\sigma)$ of $\sM_{\rm slc}(d,\Phi_c,\Gamma,\sigma)$, which parameterizes the lc stable minimal models $(X,B),A$ where the coefficients of $B$ lie in $[0,1)$. These objects serve as canonical models of projective klt pairs.
	In recent years, the hyperbolicity properties of the moduli stack of varieties have attracted many attentions.
	The research in this area is pioneered by the works of Par\v{s}in \cite{Parsin1968},  Arakelov \cite{Arakelov1971} and a series of works of Viehweg-Zuo \cite{VZ2001,VZ2002,VZ2003}. In the present article we confirm the stratified hyperbolicity properties of $\sM_{{\rm lc},[0,1)}(d,\Phi_c,\Gamma,\sigma)$.
	
	A family of lc stable minimal models $(X^o,B^o),A^o\to S^o$ is called \emph{admissible} if the coefficients of $B^o$ lie in $[0,1)$ and $(X^o,A^o+B^o)$ admits a log smooth birational model (Definition \ref{defn_admissible}). 
	\begin{thm}\label{thm_main}
		Let $f^o:(X^o,B^o),A^o\to S^o$ be an admissible family of $(d,\Phi_c,\Gamma,\sigma)$-lc stable minimal models over a smooth quasi-projective variety $S^o$ which defines a morphism $\xi^o:S^o\to \sM_{\rm lc}(d,\Phi_c,\Gamma,\sigma)$.
		Then the following results hold. 
		\begin{itemize}
			\item\emph{(big Picard theorem)} Let $S$ be a projective variety containing $S^o$ as a Zariski open subset. Assume that $\xi^o$ is quasi-finite. Then $(S,S\backslash S^o)$ is a Picard pair\footnote{The classical big Picard theorem states that $(\bP^1,\{0,1,\infty\})$ is a Picard pair.}, in the sense that either $0\in\overline{\gamma^{-1}(S\backslash S^o)}$ or $\gamma$ can be extended to a holomorphic map $\overline{\gamma}:\Delta\to S$ for any given holomorphic map $\gamma:\Delta^\ast\to S$ from the punctured unit disc $\Delta^\ast$. Here $\Delta$ is the unit disk.
			\item\emph{(Borel hyperbolicity)} If $\xi^o$ is quasi-finite, then any holomorphic map from an algebraic variety to $S^o$ is algebraic. 
			\item\emph{(Viehweg hyperbolicity)} If $\xi^o$ is generically finite, then $S^o$ is of log general type in the sense that $\omega_S(S\backslash S^o)$ is big for any smooth compactification $S$ of $S^o$ such that $S\backslash S^o$ is a divisor. 
			\item\emph{(pseudo Kobayashi hyperbolicity)} If $\xi^o$ is generically finite, then $S^o$ is Kobayashi hyperbolic away from a proper Zariski closed subset $Z\subset S^o$, meaning that the Kobayashi pseudo-distance $d_{S^o}$ satisfies $d_{S^o}(x,y)>0$ for any distinct $x,y\in S^o$ that are not both in $Z$.
			\item\emph{(Brody hyperbolicity)} If $\xi^o$ is quasi-finite, then $S^o$ is Brody hyperbolic, that is, there is no non-constant holomorphic map $\bC\to S^o$.
		\end{itemize}
	\end{thm}
	The following examples show that the assumption 'admissible' in Theorem \ref{thm_main} is indispensable.
	\begin{enumerate}
		\item \emph{The presence of the degenerating fiber.} Let $f:X\to\bP^1$ be a Lefschetz pencil with $S\subset\bP^1$ the set of its critical values, such that the general fibers of $f$ are canonically polarized $d$-folds with $v$ their volumes. Then $f:(X,0),0\to\bP^1$ is a family of $(d,\Phi_0,\{1\},v)$-lc stable minimal models.  $\bP^1$ is not hyperbolic while $\bP^1\backslash S$ is hyperbolic ($\#(S)\geq3$ due to \cite{VZ2001}).
		\item \emph{The presence of the degenerating polarization.} Let $E$ be an elliptic curve and  $x_0\in E(\bC)$. Let $X=E\times E$ and let $f:E\times E\to E$ denote the projection to the first component. Let  $A=\frac{1}{2}(E\times\{x_0\}\cup\Delta_E)$ where $\Delta_E\subset E\times E$ is the diagonal. Then $f:(X,0),A\to E$ is a family of $(1,\Phi_{\frac{1}{2}},\{1\},t)$-lc stable minimal models. The underlying family of elliptic curves is trivial, however the family of polarizations is non-isotrivial, degenerating at $x_0$. The entire base $E$ is not hyperbolic, whereas $E\backslash\{x_0\}$ is hyperbolic.
	\end{enumerate}
	A direct consequence of Theorem \ref{thm_main} is that any finite covering projective variety $S\to\sM_{{\rm lc},[0,1)}(d,\Phi_c,\Gamma,\sigma)$ is stratified hyperbolic with respect to the admissible condition (Theorem \ref{thm_stratified_hyperbolic}). 
	\begin{exmp}[Log smooth families of projective pairs of log general type]
		Let $f:(X,B)\to S$ be a log smooth family of projective klt pairs of log general type. Assume that $\dim X_s=d$, ${\rm vol}(K_{X_s}+B_s)=v$ for each $s\in S$, and the coefficients of $B$ lie in $c\bZ^{\geq0}$ for some $c\in \bQ^{\geq0}$. Then the relative lc model $(X^{\rm can},B^{\rm can}),0\to S$ (c.f. \cite{BCHM2010}) is an admissible family of $(d,\Phi_c,\{1\},v)$-lc stable minimal models (see \cite[Page 721]{WeiWu2023}). Furthermore, $f$ induces a morphism $S\to\sM_{\rm lc}(d,\Phi_c,\{1\},v)$.  Theorem \ref{thm_main} yields the following corollary.
		\begin{cor}
			Fix $d\in\bN,c\in\bQ^{\geq0}$ and $v\in\bQ^{>0}$.
			Let $f^o:(X^o,B^o)\to S^o$ be a log smooth family of projective pairs of general type over a smooth quasi-projective variety $S^o$ such that $\dim X_s=d$, ${\rm vol}(K_{X_s}+B_s)=v$ for every $s\in S^o$ and the coefficients of $B$ lie in $c\bZ^{\geq0}\cap[0,1)$. Let $\xi^o:S^o\to \sM_{\rm lc}(d,\Phi_c,\{1\},v)$ be the map determined by $f^o$.
			Then the following results hold. 
			\begin{itemize}
				\item Let $S$ be a projective variety containing $S^o$ as a dense Zariski open subset. If $\xi^o$ is quasi-finite, then $(S,S\backslash S^o)$ is a Picard pair.
				\item If $\xi^o$ is quasi-finite, then $S^o$ is Borel hyperbolic and Brody hyperbolic.
				\item If $\xi^o$ is generically finite, then $S^o$ is of log general type and Kobayashi hyperbolic modulo a proper Zariski closed subset.
			\end{itemize}    
		\end{cor}
		Among the results above, the Viehweg hyperbolicity, the Brody hyperbolicity and the pseudo Kobayashi hyperbolicity have been proved, mainly due to \cite{VZ2003,PS17,PTW2019,WeiWu2023}.
	\end{exmp}
	\begin{exmp}[Families of stable Calabi-Yau pairs]
		A lc stable minimal model $(X,B),A$ is a stable Calabi-Yau pair if $K_X+B\sim_{\bQ}0$. The base of an admissible family of $(d,\Phi_c,\Gamma,\sigma)$-lc stable Calabi-Yau pairs satisfies the hyperbolicity properties proposed in Theorem \ref{thm_main}. 
	\end{exmp}
	\begin{exmp}[Families of stable Fano pairs]
		A lc stable minimal model $(X,B),A$ is a stable Fano pair if $(X,A+B),A$ is a stable Calabi-Yau pair. The base of an admissible family of $(d,\Phi_c,\Gamma,\sigma)$-stable Fano pairs satisfies the hyperbolicity properties proposed in Theorem \ref{thm_main}. 
	\end{exmp}
	Many efforts have been made to the hyperbolicity properties of various  moduli spaces of polarized projective manifolds. For an incomplete list see \cite{VZ2003,PS17,PTW2019,Deng2022,DLSZ,WeiWu2023} and the references therein. 
	
	The \emph{Viehweg hyperbolicity} in Theorem \ref{thm_main} has been studied by numerous scholars, with a non-exhaustive list of works including but not limited to \cite{Kovacs1996,Kovacs2000,OV2001,VZ2001,Kovacs2002,Kovacs2003,KK2008,KK20082,KK2010,Patakfalvi2012,PS17,CP2019,Taji2021,WeiWu2023}.
	The Viehweg hyperbolicity, in the case of families of canonically polarized manifolds (known as the Viehweg hyperbolicity conjecture), was first proved by Campana-P\v{a}un \cite{CP2019}, based on the so-called
	Viehweg-Zuo sheaves constructed in \cite{VZ2003}. By using Saito's theory of Hodge modules \cite{MSaito1988}, Popa-Schnell \cite{PS17} extended the construction of Viehweg-Zuo sheaves to base spaces of an arbitrary family of projective varieties with a geometric generic fiber that admits a good minimal model, and proved the relevant hyperbolicity property for these families. Popa-Schnell's result was further generalized by Wei-Wu \cite{WeiWu2023} to log smooth families of pairs of log general type. There are also other admissible conditions to ensure the relevant hyperbolicity results, see \cite{Kovacs2021,Park2022} for example.
	
	The \emph{big Picard theorem} can be traced back to the classical big Picard theorem, which states that any holomorphic map from the punctured disk  $\Delta^\ast$ into $\bP^1$ that omits three points can be extended to a holomorphic map $\Delta\rightarrow \bP^1$. In a recent work by Deng-Lu-Sun-Zuo \cite{DLSZ}, it has been confirmed that the big Picard theorem holds for the base of a maximal variational family of good minimal manifolds. 
	The \emph{Borel hyperbolicity} is a formal consequence of the big Picard theorem (see \cite[Corollary C]{DLSZ} or Proposition \ref{prop_BPT_BH}).
	
	The \emph{pseudo Kobayashi hyperbolicity}, in the case of family of curves of genus $g>1$, was established by Royden \cite{Royden1975} and Wolpert \cite{Wolpert1986}. To-Yeung \cite{Yeung2015} made the first breakthrough on higher dimensional families, demonstrating that the base manifold of any effectively parametrized family of canonically polarized manifolds is Kobayashi hyperbolic. Further relevant works on other families can be found in Berndtsson-P\u{a}un-Wang \cite{BPW2022}, Schumacher \cite{Schumacher2018}, Deng \cite{Deng2022} and To-Yeung \cite{Yeung2018,Yeung2021}.

	The \emph{Brody hyperbolicity}, in the case of families of canonically polarized manifolds, was
	proved by Viehweg-Zuo \cite{VZ2003} using their construction of systems of Higgs sheaves. It was generalized by Popa-Taji-Wu \cite{PTW2019} to families of general type minimal manifolds, using similar constructions as in \cite{PS17}. These results were further generalized by Deng \cite{Deng2022} to families of good minimal manifolds, answering a question by Viehweg-Zuo \cite[Question 0.1]{VZ2003}.
	Wei-Wu \cite{WeiWu2023} studied the Brody hyperbolicity of a log smooth family of pairs of log general type. 
	
	Due to the works \cite{VZ2003,CP2019,Deng2022,DLSZ}, the five hyperbolicity results follow from the construction of a certain system of Higgs sheaves associated with the relevant family. The main contribution of the present article is the construction of the Viehweg-Zuo type system of Higgs sheaves associated with an admissible family of lc stable minimal models.
	\begin{thm}[=Theorem \ref{thm_big_Higgs_sheaf}]\label{thm_main_VZHiggs}
		Let $f^o:(X^o,B^o),A^o\to S^o$ be an  admissible family of $(d,\Phi_c,\Gamma,\sigma)$-lc stable minimal models over a smooth quasi-projective variety $S^o$ which defines a generically finite morphism $\xi^o:S^o\to M_{\rm lc}(d,\Phi_c,\Gamma,\sigma)$.
		Let $S$ be a smooth projective variety containing $S^o$ as a Zariski open subset such that $D:=S\backslash S^o$ is a (reduced) simple normal crossing divisor and $\xi^o$ extends to a morphism $\xi:S\to M_{\rm lc}(d,\Phi_c,\Gamma,\sigma)$\footnote{We do not require $\xi$ to have a moduli interpretation at the boundary $D$.}. Let $\sL$ be a line bundle on $S$. Then there exist the following data.
		\begin{enumerate}
			\item A projective birational morphism $\pi:S'\to S$ such that $S'$ is smooth, $\pi^{-1}(D)$ is a simple normal crossing divisor and $\pi$ is a composition of smooth blow-ups.
			\item A (possibly non-reduced) effective exceptional divisor $E$ of $\pi$ such that $E\cup\pi^{-1}(D)$ has a simple normal crossing support.
			\item A $\bQ$-polarized variation of Hodge structure of weight $w>0$ on $S'\backslash (E\cup \pi^{-1}(D))$ with $(H=\bigoplus_{p+q=w} H^{p,q},\theta,h)$ its associated Higgs bundle by taking the total graded quotients with respect to the Hodge filtration. Here $h$ is the Hodge metric.
		\end{enumerate}
		These data satisfy the following conditions.
		\begin{enumerate}
			\item There is a coherent ideal sheaf $I_Z$ on $S$ whose co-support $Z$ is contained in $D$ and ${\rm codim}_S(Z)\geq2$, and a natural inclusion $\sL\otimes I_Z\subset \pi_{\ast}\left({_{<\pi^{-1}(D)+E}}H^{w,0}\right)$.
			\item Let $(\bigoplus_{p=0}^w L^p,\theta)$ be the log Higgs subsheaf generated by $L^0:=\sL\otimes I_Z$, where $$L^p\subset\pi_{\ast}\left({_{<\pi^{-1}(D)+E}}H^{w-p,p}\right).$$ Then  the Higgs field $$\theta:L^p|_{S\backslash (D\cup\pi(E))}\to L^{p+1}|_{S\backslash (D\cup\pi(E))}\otimes \Omega_{S\backslash (D\cup\pi(E))}$$ is holomorphic over $S\backslash D$ for each $0\leq p<n$, that is, 
			$$\theta(L^p)\subset L^{p+1}\otimes\Omega_S(\log D).$$
		\end{enumerate}
	\end{thm}
	Here ${_{<\pi^{-1}(D)+E}}H^{p,q}$ is the prolongation of the Hodge bundle $H^{p,q}$ in the sense of Simpson \cite{Simpson1990} and Mochizuki \cite{Mochizuki20072} (see \S \ref{section_prolongation}), consisting of the meromorphic sections with prescribed poles. 
	Theorem \ref{thm_main_VZHiggs} is proved by extending the original construction of Viehweg-Zuo \cite{VZ2003} by using the theory of degenerations of Hodge structure and non-abelian Hodge theory. Our construction is also valid for K\"ahler morphisms (see \S\ref{section_Analytic_prolongation_VZ}).
	
	The present article is organized as follows. In \S2 we recall the theory of degenerations and prolongations of a variation of Hodge structure. In \S3 we investigate the analytic prolongations of Viehweg-Zuo Higgs sheaves (Theorem \ref{thm_VZ_prolongation}). \S4 is the review of the basic notions and constructions of Birkar's moduli space of stable minimal models. Theorem \ref{thm_main_VZHiggs} is proved in \S \ref{section_VZHiggs_stable_family}. \S5 is devoted to the proof of Theorem \ref{thm_main} and several consequences.
	
	{\bf Notations:}
	\begin{itemize}
		\item Let $F$ be a torsion free coherent sheaf on a smooth variety $X$. Let  $F^{\vee}:=\sH om_{\sO_X}(F,\sO_X)$ so that $F^{\vee\vee}$ is the reflexive hull of $F$. We use $\det(F)$ to denote the reflexive hull of $\wedge^{{\rm rank}(F)}F$.
		\item 
		We define a functorial desingularization of $(X,Z)$, where $X$ is a complex space and $Z\subset X$ is a closed analytic subset, in the sense of W{\l}odarczyk \cite{Wlodarczyk2005,Wlodarczyk2009}. Specifically, this desingularization is a projective bimeromorphic morphism $\pi:X'\to X$ from a complex manifold $X'$, satisfying the conditions that $\pi^{-1}(Z)$, $\pi^{-1}(X_{\rm sing})$, and $\pi^{-1}(Z)\cup\pi^{-1}(X_{\rm sing})$ are simple normal crossing divisors on $X'$, and $\pi$ is biholomorphic over the loci where $(X,Z)$ is a log smooth pair. 
		\item  $\Delta:=\{t\in\bC\mid |t|<1\}$ denotes the unit disck in $\bC$ and $\Delta^\ast:=\Delta\setminus\{0\}$.
		\item Let $f:X\to S$ be a morphism between algebraic varieties. The fiber $f^{-1}\{s\}$ over the point $s\in S$ is denoted by $X_s$.
	\end{itemize}
	\section{Analytic prolongations of variations of Hodge structure}
	\subsection{Norm estimates for the Hodge metric}
	Let $\bV=(\cV,\nabla,\cF^\bullet,Q)$ be an $\bR$-polarized variation of Hodge structure over $(\Delta^\ast)^n\times \Delta^m$, together with the Hodge metric $h_Q$, where $(\cV,\nabla)$ is a flat connection, $\cF^\bullet$ is the Hodge filtration and $Q$ is a real polarization. Let us fix the standard coordinates $s_1,\dots,s_n, w_1,\dots,w_m$ on $(\Delta^\ast)^n\times \Delta^m$. Let $D_i:=\{s_i=0\}\subset\Delta^{n+m}$ for every $i=1,\dots,n$. Let $N_i$ be the unipotent part of ${\rm Res}_{D_i}\nabla$ and let 
	$$p:\bH^{n}\times \Delta^m\to (\Delta^\ast)^n\times \Delta^m,$$ 
	$$(z_1,\dots,z_n,w_1,\dots,w_m)\mapsto(e^{2\pi\sqrt{-1}z_1},\dots,e^{2\pi\sqrt{-1}z_n},w_1,\dots,w_m)$$
	be the universal covering. Let
	$W^{(1)}=W(N_1),\dots,W^{(n)}=W(N_1+\cdots+N_n)$ be the monodromy weight filtrations (centered at 0) on $V:=\Gamma(\bH^n\times \Delta^m,p^\ast\cV)^{p^\ast\nabla}$.
	The following important norm estimates for flat sections are due to Cattani-Kaplan-Schmid \cite[Theorem 5.21]{Cattani_Kaplan_Schmid1986} and Mochizuki \cite[Part 3, Chapter 13]{Mochizuki20072}.
	\begin{thm}\label{thm_Hodge_metric_asymptotic}
		For any $0\neq v\in {\rm Gr}_{l_n}^{W^{(n)}}\cdots{\rm Gr}_{l_1}^{W^{(1)}}V$, one has
		\begin{align*}
		|v|^2_{h_Q}\sim \left(\frac{\log|s_1|}{\log|s_2|}\right)^{l_1}\cdots\left(-\log|s_n|\right)^{l_n}
		\end{align*}
		on any region of the form
		$$\left\{(s_1,\dots, s_n,w_1,\dots,w_m)\in (\Delta^\ast)^n\times \Delta^m\bigg|\frac{\log|s_1|}{\log|s_2|}>\epsilon,\dots,-\log|s_n|>\epsilon,(w_1,\dots,w_m)\in K\right\}$$
		for any $\epsilon>0$ and  $K\subset \Delta^m$ compact.
	\end{thm}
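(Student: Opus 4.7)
The plan is to reduce the norm estimate on $\bV$ to an explicit calculation on an associated $SL_2^n$-orbit of polarized Hodge structures, following the Cattani-Kaplan-Schmid strategy, and then to extend the argument to the general (not necessarily quasi-unipotent) case via Mochizuki's theory of tame harmonic bundles. Since $w\in\Delta^m$ ranges over a compact set, the transverse parameters contribute only bounded multiplicative factors, so I may fix $w$ and reduce to the case over $(\Delta^\ast)^n$. In the quasi-unipotent setting, after a finite cyclic base change in each $\Delta^\ast$-factor I may also assume the monodromies are unipotent, so that $N_i$ is the logarithm of the $i$-th monodromy. Schmid's nilpotent orbit theorem and its several-variable extension then bound the Hodge metric $h_Q$ above and below by the Hodge metric of the nilpotent orbit $F(z)=\exp\bigl(\sum z_iN_i\bigr)\cdot F_{\rm lim}$, uniformly on any region of the prescribed form. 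It therefore suffices to establish the estimate on the nilpotent orbit.

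Next I invoke the $SL_2^n$-orbit theorem of Cattani-Kaplan-Schmid on the region under consideration. It produces a reference polarized real Hodge structure together with commuting $\mathfrak{sl}_2$-triples $(N_k^{-},Y_k,N_k^{+})$, characterized by the condition that, for each $k$, the sum $Y_1+\cdots+Y_k$ is the grading operator of the monodromy weight filtration $W^{(k)}=W(N_1+\cdots+N_k)$. Consequently, any $0\neq v\in {\rm Gr}^{W^{(n)}}_{l_n}\cdots{\rm Gr}^{W^{(1)}}_{l_1}V$ lifts to a simultaneous eigenvector of all $Y_k$ satisfying $(Y_1+\cdots+Y_k)v=l_k v$ for every $k$. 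The asymptotic expansion of the nilpotent-orbit Hodge norm furnished by the theorem, with scaling parameters $y_k/y_{k+1}$ on the $k$-th iterated $SL_2$-factor (setting $y_k:=-\log|s_k|$ and $y_{n+1}:=1$), then reads
$$|v|^2_{h_Q}\sim\prod_{k=1}^{n}\left(\frac{y_k}{y_{k+1}}\right)^{l_k},$$
which is precisely the formula appearing in the statement.

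The main obstacle is the removal of the quasi-unipotent hypothesis, since the CKS machinery is only developed in that setting. Here I follow Mochizuki, whose theory of tame pure polarized harmonic bundles provides twistor analogues of both the nilpotent orbit theorem and the $SL_2^n$-orbit theorem, now phrased through limiting mixed twistor structures and the parabolic prolongation of $(\cV,\nabla)$. The nilpotent parts of the residues of $\nabla$ still define the filtrations $W(N_i)$, and once these twistor versions of the orbit theorems are in hand the computation of the Hodge norm on the iterated graded pieces proceeds verbatim. The principal analytic difficulty is Mochizuki's uniform comparison between the harmonic metric of $\bV$ and the model metric built from the $SL_2^n$-orbit data, a comparison which must remain valid on the arbitrary sectors of the polydisc described in the statement and survive restriction to the successive graded quotients of the weight filtrations.
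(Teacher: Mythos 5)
The paper does not prove this theorem: it is stated as a citation of Cattani--Kaplan--Schmid \cite[Theorem 5.21]{Cattani_Kaplan_Schmid1986} in the quasi-unipotent case and Mochizuki \cite[Part 3, Chapter 13]{Mochizuki20072} in general, with no argument of its own. Your outline is an accurate high-level account of how those sources establish the estimate: reduce to the nilpotent orbit by the several-variable nilpotent orbit theorem, apply the $SL_2^n$-orbit theorem to obtain commuting gradings $Y_1,\dots,Y_n$ whose cumulative sums $Y_1+\cdots+Y_k$ grade the filtrations $W^{(k)}=W(N_1+\cdots+N_k)$, identify the iterated graded pieces with simultaneous eigenspaces, and read off the norm asymptotics $\prod_k(y_k/y_{k+1})^{l_k}$ with $y_k=-\log|s_k|$ and $y_{n+1}=1$; Mochizuki's twistor analogues of the two orbit theorems then remove the quasi-unipotency hypothesis needed for a merely $\bR$-polarized variation. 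Two small remarks: the reduction by a cyclic base change $s_j\mapsto s_j^{N_j}$ rescales each $y_j$ by a constant and so only affects the estimate up to bounded factors, which is harmless since the statement is up to $\sim$; and the identification of the iterated graded $\mathrm{Gr}^{W^{(n)}}_{l_n}\cdots\mathrm{Gr}^{W^{(1)}}_{l_1}V$ with the common eigenspace relies precisely on the commutation of the $Y_k$'s guaranteed by the $SL_2^n$-orbit theorem, as you note. So your proposal is not a genuinely different route but a faithful summary of the cited proofs, which is the appropriate way to treat a result the paper itself uses as a black box.
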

	The rest of this part is devoted to the norm estimates of $h_Q$ on $S(\bV)$, where $S(\bV)$ denotes $\cF^{\max\{p\mid\cF^p\neq0\}}$. Let $\cV_{-1}$ denote Deligne's canonical extension of $(\cV,\nabla)$ whose real parts of the eigenvalues of the residue maps lie in $(-1,0]$. By the nilpotent orbit theorem \cite{Cattani_Kaplan_Schmid1986} $j_\ast S(\bV)\cap\cV_{-1}$ is a subbundle of $\cV_{-1}$.
	\begin{lem}\label{lem_W_F}
		Assume that $n=1$. Then $W_{-1}(N_1)\cap \big(j_\ast S(\bV)\cap\cV_{-1}\big)|_{\bf 0}=0$.
	\end{lem}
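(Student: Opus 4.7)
\textbf{Proof proposal for Lemma \ref{lem_W_F}.} Let $w$ be the weight of $\bV$, let $p_0:=\max\{p\mid\cF^p\neq 0\}$ so that $S(\bV)=\cF^{p_0}$, and set $q_0:=w-p_0$. The plan is to translate the statement into a fact about Deligne's bigrading of the limiting mixed Hodge structure at the origin, and then to deduce it from Hodge symmetry together with the polarization of $\bV$.

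First, by Schmid's nilpotent orbit theorem (as extended by Cattani-Kaplan-Schmid and by Mochizuki to the non-quasi-unipotent case), the prolongation $j_\ast S(\bV)\cap\cV_{-1}$ is a subbundle of $\cV_{-1}$, and its fiber at $\mathbf{0}$ coincides with the limiting piece $F^{p_0}_{\lim}$ of the extended Hodge filtration on $V:=\cV_{-1}|_{\mathbf{0}}$. Moreover, Schmid's theorem endows $(V, W_\bullet(N_1)[w], F^\bullet_{\lim})$ with the structure of a polarized mixed Hodge structure, where $[w]$ is the shift relating the paper's convention ($W(N_1)$ centered at $0$) to the MHS convention centered at $w$, i.e.\ $M_k=W_{k-w}(N_1)$. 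Thus the desired statement $W_{-1}(N_1)\cap F^{p_0}_{\lim}=0$ becomes $M_{w-1}\cap F^{p_0}_{\lim}=0$, i.e.\ every nonzero element of $F^{p_0}_{\lim}$ has MHS weight at least $w$.

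Next I would introduce Deligne's bigrading $V_\bC=\bigoplus_{p,q} I^{p,q}$ characterized by $F^p_{\lim}=\bigoplus_{p'\geq p} I^{p',q'}$ and $M_k=\bigoplus_{p'+q'\leq k} I^{p',q'}$, and show that $I^{p,q}=0$ whenever $\min(p,q)<q_0$. This uses two inputs:
\begin{enumerate}
\item[(i)] Because $\bV$ is $\bR$-polarized of weight $w$, the Hodge numbers are symmetric, $h^{p,w-p}=h^{w-p,p}$, so $q_0=w-p_0$ is the least index with nonzero graded piece and therefore $\cF^{q_0}=\cV$. By the nilpotent orbit theorem, $F^{q_0}_{\lim}=V$, which forces $I^{p,q}=0$ for all $p<q_0$.
\item[(ii)] Deligne's conjugation symmetry $I^{p,q}=\overline{I^{q,p}}$ for an $\bR$-MHS then yields $I^{p,q}=0$ for $q<q_0$ as well.
\end{enumerate}

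Finally, combining these: since $F^{p_0+1}_{\lim}=0$, one has $F^{p_0}_{\lim}=\bigoplus_q I^{p_0,q}$, and by the above only the summands with $q\geq q_0$ can be nonzero. Hence every nonzero element of $F^{p_0}_{\lim}$ lies in $\bigoplus_{p+q\geq p_0+q_0=w} I^{p,q}$, while $M_{w-1}=\bigoplus_{p+q\leq w-1} I^{p,q}$. The two subspaces therefore intersect trivially, proving the lemma. The only delicate point I foresee is bookkeeping the shift between the ``centered at $0$'' and ``centered at $w$'' conventions and correctly invoking the identification of $(j_\ast S(\bV)\cap\cV_{-1})|_{\mathbf{0}}$ with $F^{p_0}_{\lim}$; once those are in place the argument is pure linear algebra on the limiting MHS.
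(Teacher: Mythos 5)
Your proof is correct, but it takes a genuinely different route from the paper. The paper's argument uses Schmid's result that $N_1^l:\mathrm{Gr}_l^{W(N_1)}\to\mathrm{Gr}_{-l}^{W(N_1)}$ is an isomorphism of Hodge type $(-l,-l)$ on the limiting mixed Hodge structure: picking a nonzero $[\alpha]\in W_{-l}\cap F^{p_0}_{\lim}$ with $l$ maximal, the hard Lefschetz isomorphism produces a preimage $\beta$ of Hodge type $(p_0+l,\,w-p_0)$, which must vanish because $\cF^{p_0+l}=0$; contradiction. You instead pass to Deligne's bigrading $I^{p,q}$, prove $I^{p,q}=0$ for $\min(p,q)<q_0$ via $F^{q_0}_{\lim}=V$ (Hodge symmetry on the generic fiber plus the nilpotent orbit theorem) together with Deligne's conjugation relation, and then conclude by pure bookkeeping that $F^{p_0}_{\lim}\cap M_{w-1}=\bigoplus_{a\geq p_0,\,a+b\leq w-1}I^{a,b}=0$. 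This avoids explicitly invoking the $N^l$-isomorphism, trading it for the bigrading machinery; both inputs ultimately come from Schmid's theory, so neither is strictly more elementary, but your version is more ``splitting-theoretic'' while the paper's is more ``$\mathrm{sl}_2$-theoretic.'' One small imprecision to fix: the identity $I^{p,q}=\overline{I^{q,p}}$ you invoke in step~(ii) holds only for $\bR$-split mixed Hodge structures; for a general $\bR$-MHS one has $\overline{I^{p,q}}\equiv I^{q,p}\ (\mathrm{mod}\ \bigoplus_{a<q,\,b<p}I^{a,b})$. This does not damage your argument—if $q<q_0$ and $I^{p,q}\neq 0$ then $\overline{I^{p,q}}\subset I^{q,p}+\sum_{a<q,\,b<p}I^{a,b}$, and every summand vanishes because its first index is $<q_0$—but the congruence, not the equality, is what should be cited.
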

	\begin{proof}
		Assume that $W_{-1}(N_1)\cap \big(j_\ast S(\bV)\cap\cV_{-1}\big)|_{\bf 0}\neq0$ and let $k$ be the weight of $\bV$. Let $l=\max\{l\mid W_{-l}(N_1)\cap \big(j_\ast S(\bV)\cap\cV_{-1}\big)|_{\bf 0}\neq0\}$. Then $l\geq 1$. 
		By \cite[6.16]{Schmid1973}, the filtration $j_\ast \cF^\bullet\cap\cV_{-1}$ induces a pure Hodge structure of weight $m+k$ on $W_{m}(N_1)/W_{m-1}(N_1)$. Moreover, 
		\begin{align}\label{align_hard_lef_N}
		N^l: W_{l}(N_1)/W_{l-1}(N_1)\to W_{-l}(N_1)/W_{-l-1}(N_1)
		\end{align}
		is an isomorphism of type $(-l,-l)$. Let $p=\max\{i\mid \cF^i\neq0\}$. By the definition of $l$, any nonzero element $\alpha\in W_{-l}(N_1)\cap \big(j_\ast S(\bV)\cap\cV_{-1}\big)|_{\bf 0}$ induces a nonzero $[\alpha]\in W_{-l}(N_1)/W_{-l-1}(N_1)$ of Hodge type $(p,k-l-p)$. Since (\ref{align_hard_lef_N}) is an isomorphism, there is $\beta\in W_{l}(N_1)/W_{l-1}(N_1)$ of Hodge type $(p+l,k-p)$ such that $N^l(\beta)=[\alpha]$. However, $\beta=0$ since $\cF^{p+l}=0$. This contradicts to the fact that $[\alpha]\neq0$. Consequently, $W_{-1}(N_1)\cap \big(j_\ast S(\bV)\cap\cV_{-1}\big)_{\bf 0}$ must be zero.
	\end{proof}
	Let $T_i$ denote the local monodromy operator of $\bV$ around $D_i$.
	Since $T_1,\dots,T_n$ are pairwise commutative, there is a finite decomposition 
	$$\cV_{-1}|_{\bf 0}=\bigoplus_{-1<\alpha_1,\dots,\alpha_n\leq 0}\bV_{\alpha_1,\dots,\alpha_n}$$
	such that $(T_i-e^{2\pi\sqrt{-1}\alpha_i}{\rm Id})$ is unipotent on $\bV_{\alpha_1,\dots,\alpha_n}$ for each $i=1,\dots,n$. 
	Let $$v_1,\dots, v_N\in (\cV_{-1}\cap j_\ast S(\bV))|_{\bf 0}\cap\bigcup_{-1<\alpha_1,\dots,\alpha_n\leq 0}\bV_{\alpha_1,\dots,\alpha_n}$$
	be an orthogonal basis of $(\cV_{-1}\cap j_\ast S(\bV))|_{\bf 0}\simeq \Gamma(\bH^n\times\Delta^m,p^\ast S(\bV))^{p^\ast\nabla}$. Then $\widetilde{v_1},\dots,\widetilde{v_N}$ that are determined by
	\begin{align}\label{align_adapted_frame}
	\widetilde{v_j}:={\rm exp}\left(\sum_{i=1}^n\log s_i(\alpha_i{\rm Id}+N_i)\right)v_j\textrm{ if } v_j\in\bV_{\alpha_1,\dots, \alpha_n},\quad \forall j=1,\dots,N
	\end{align}
	form a frame of $\cV_{-1}\cap j_\ast S(\bV)$.
	We always use the notation $\alpha_{D_i}(\widetilde{v_j})$ instead of $\alpha_i$ in (\ref{align_adapted_frame}). By (\ref{align_adapted_frame}) we see that 
	\begin{align*}
	|\widetilde{v_j}|^2_{h_Q}&\sim\left|\prod_{i=1}^ns_i^{\alpha_{D_i}(\widetilde{v_j})}{\rm exp}\left(\sum_{i=1}^nN_i\log s_i\right)v_j\right|^2_{h_Q}\\\nonumber
	&\sim|v_j|^2_{h_Q}\prod_{i=1}^n |s_i|^{2\alpha_{D_i}(\widetilde{v_j})},\quad j=1,\dots,N
	\end{align*}
	where $\alpha_{D_i}(\widetilde{v_j})\in(-1,0]$, $\forall i=1,\dots, n$. 
	It follows from Theorem \ref{thm_Hodge_metric_asymptotic} and Lemma \ref{lem_W_F}
	that
	\begin{align*}
	|v_j|^2_{h_Q}\sim \left(\frac{\log|s_1|}{\log|s_2|}\right)^{l_1}\cdots\left(-\log|s_n|\right)^{l_n}\quad\textrm{with}\quad 0\leq l_1\leq l_2\leq\dots\leq l_{n},
	\end{align*}
	on any region of the form
	$$\left\{(s_1,\dots, s_n,w_1,\dots,w_{m})\in (\Delta^\ast)^n\times \Delta^{m}\bigg|\frac{\log|s_1|}{\log|s_2|}>\epsilon,\dots,-\log|s_n|>\epsilon,(w_1,\dots,w_{m})\in K\right\}$$
	for any $\epsilon>0$ and $K\subset \Delta^{m}$ compact. Therefore, we know that
	\begin{align*}
	1\lesssim |v_j|\lesssim|s_1\cdots s_n|^{-\epsilon},\quad\forall\epsilon>0.
	\end{align*}
	
	\begin{defn}(Zucker \cite[page 433]{Zucker1979})
		Let $(E,h)$ be a vector bundle with a possibly singular hermitian metric $h$ on a hermitian manifold $(X,ds^2_0)$. A holomorphic local frame $(v_1,\dots,v_N)$ of $E$ is called $L^2$-adapted if, for every set of measurable functions $\{f_1,\dots,f_N\}$, $\sum_{i=1}^Nf_iv_i$ is locally square integrable if and only if $f_iv_i$ is locally square integrable for all $i$.
	\end{defn}
	\begin{lem}
		The local frame $(\widetilde{v_1},\dots,\widetilde{v_N})$ is $L^2$-adapted.
	\end{lem}
	\begin{proof}
		If 
		$$\sum_{j=1}^N f_j\widetilde{v_j}={\rm exp}\left(\sum_{i=1}^nN_i\log s_i\right)\left(\sum_{j=1}^N f_j\prod_{i=1}^n |s_i|^{\alpha_{D_i}(\widetilde{v_j})}v_j\right)$$
		is locally square integrable, then 
		$$\sum_{j=1}^N f_j\prod_{i=1}^n |s_i|^{\alpha_{D_i}(\widetilde{v_j})}v_j$$
		is locally square integrable because the entries of the matrix ${\rm exp}\left(-\sum_{i=1}^nN_i\log s_i\right)$ are $L^\infty$-bounded.
		Since $(v_1,\dots,v_N)$ is an orthogonal basis, 
		$|f_j\widetilde{v_j}|_{h_Q}\sim\prod_{i=1}^n |s_i|^{\alpha_{D_i}(\widetilde{v_j})}|f_jv_j|_{h_Q}$ is locally square integrable for all $j$. 
	\end{proof}
	In conclusion, we obtain the following proposition.
	\begin{prop}\label{prop_adapted_frame}
		Let $(X,ds^2_0)$ be a hermitian manifold and $D$ a normal crossing divisor on $X$. Let $\bV$ be an $\bR$-polarized variation of Hodge structure on $X^o:=X\backslash D$. Then there is an $L^2$-adapted holomorphic local frame $(\widetilde{v_1},\dots,\widetilde{v_N})$ of $\cV_{-1}\cap j_\ast S(\bV)$ at every point $x\in D$. 
		Let $z_1,\cdots,z_n$ be holomorphic local coordinates on $X$ so that $D
		=\{z_1\cdots z_r=0\}$. Then there are constants  $\alpha_{D_i}(\widetilde{v_j})\in(-1,0]$, $i=1,\dots, r$, $j=1,\dots,N$ and positive real functions $\lambda_j\in C^\infty(X\backslash D)$, $j=1,\dots,N$ such that
		\begin{align*}
		|\widetilde{v_j}|^2\sim\lambda_j\prod_{i=1}^r |z_i|^{2\alpha_{D_i}(\widetilde{v_j})},\quad \forall j=1,\dots,N
		\end{align*}
		and
		$$1\lesssim \lambda_j\lesssim|z_1\cdots z_r|^{-\epsilon},\quad\forall\epsilon>0,\quad \forall j=1,\dots,N$$
	\end{prop}
	\subsection{Prolongations of a variation of Hodge structure: log smooth case}\label{section_prolongation}
	Let $X$ be a complex manifold and $D=\sum_{i=1}^l D_i$ a reduced simple normal crossing divisor on $X$.
	Let $(E,h)$ be a holomorphic vector bundle on $X\backslash D$ with a smooth hermitian metric $h$. Let $D_1=\sum_{i=1}^la_iD_i$, $D_2=\sum_{i=1}^lb_iD_i$ be $\bR$-divisors. We denote $D_1<(\leq) D_2$ if $a_i<(\leq) b_i$ for all $i$.
	\begin{defn}[Analytic prolongation](\cite{Mochizuki2002}, Definition 4.2)\label{defn_prolongation}
		Let $A=\sum_{i=1}^la_iD_i$ be an $\bR$-divisor, let $U$ be an open subset of $X$, and let $s\in \Gamma(U\backslash D,E)$ be a holomorphic section. We denote $(s)\leq -A$ if $|s|_h=O(\prod_{k=1}^r |z_{k}|^{-a_{i_k}-\epsilon})$ for any positive number $\epsilon$, where $z_1,\dots,z_n$ are holomorphic local coordinates such that $D=\{z_1\cdots z_r=0\}$ and $D_{i_k}=\{z_k=0\}$ for each $k=1,\dots,r$.
		The $\sO_X$-module $ _{A}E$ is defined as 
		$$\Gamma(U, {_{A}}E):=\{s\in\Gamma(U\backslash D,E)|(s)\leq -A\}$$
		for any open subset $U\subset X$.
		Let
		\begin{align*}
		{_{<A}}E:=\bigcup_{{B}<{A}}{_{B}}E\quad\textrm{and}\quad{\rm Gr}_{A}E:={_{A}}E/{_{<A}}E.
		\end{align*}	
	\end{defn}
	Let $\bV=(\cV,\nabla,\cF^\bullet,Q)$ be an $\bR$-polarized variation of Hodge structure of weight $w$ on $X\backslash D$. Let $(H:={\rm Gr}_{\cF^\bullet}\cV,\theta:={\rm Gr}_{\cF^\bullet}\nabla)$ denote the total graded quotient. Then $(H,\theta)$ is the Higgs bundle  corresponding to $(\cV,\nabla)$ via Simpson's correspondence \cite{Simpson1988}. The Hodge metric $h_Q$ associated with $Q$ is a harmonic metric on $(H,\theta)$. The triple $(H,\theta,h_Q)$ is a tame harmonic bundle in the sense of Simpson \cite{Simpson1990} and Mochizuki \cite{Mochizuki20072}. Notice that $(H,\theta)$ is a system of Hodge bundles (\cite[\S 4]{Simpson1992}) in the sense that
	\begin{align*}
	H=\bigoplus_{p+q=w} H^{p,q},\quad H^{p,q}\simeq \cF^{p}/\cF^{p+1},\quad \theta(H^{p,q})\subset H^{p-1,q+1}\otimes\Omega_{X\backslash D}.
	\end{align*}
	According to Simpson \cite[Theorem 3]{Simpson1990} and Mochizuki \cite[Proposition 2.53]{Mochizuki2009},  the set of prolongations forms a parabolic structure.
	\begin{thm}\label{thm_parabolic}
		Let $X$ be a complex manifold and $D=\sum_{i=1}^l D_i\subset X$ a reduced simple normal crossing divisor. Let $(H=\oplus_{p+q=w} H^{p,q},\theta,h_Q)$ be the system of Hodge bundles associated with an $\bR$-polarized variation of Hodge structure of weight $w$ on $X\backslash D$.
		For each $\bR$-divisor $A$ supported on $D$, $_{A}H$ is a locally free coherent sheaf satisfying the following conditions:
		\begin{itemize}
			\item  $_{A+\epsilon D_i}H = {_A}H$ for any $i=1,\dots,l$ and any constant $0<\epsilon\ll 1$,
			\item $_{A+D_i}H={_A}H\otimes \sO(-D_i)$ for every $1\leq i\leq l$,
			\item the subset of $(a_1,\dots,a_l)\in\bR^l$ such that ${\rm Gr}_{\sum_{i=1}^l a_iD_i}H\neq 0$ is discrete, and
			\item the Higgs field $\theta$ has at most logarithmic poles along $D$, that is, $\theta$ extends to 
			\begin{align*}
			{_{A}}H\to{_{A}}H\otimes\Omega_{X}(\log D).
			\end{align*}
		\end{itemize} 
	\end{thm}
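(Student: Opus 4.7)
The plan is to work locally on a polydisc $\Delta^n$ with coordinates $z_1,\dots,z_n$ such that $D=\{z_1\cdots z_r=0\}$, and produce an $L^2$-adapted holomorphic frame for the full Hodge bundle $H$ (not merely for its top piece $S(\bV)$ as in Proposition \ref{prop_adapted_frame}) whose elements satisfy the explicit asymptotics
\begin{equation*}
	|\widetilde{v}_j|^2_{h_Q}\sim\lambda_j\prod_{i=1}^r |z_i|^{2\alpha_{D_i}(\widetilde{v}_j)},\qquad \alpha_{D_i}(\widetilde{v}_j)\in(-1,0],\qquad 1\lesssim\lambda_j\lesssim|z_1\cdots z_r|^{-\epsilon}.
\end{equation*}
With such a frame available, each claim of the theorem reduces to explicit vanishing conditions on the holomorphic coefficients of a section written in this frame.

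To construct the frame, I would start from Deligne's canonical extension $\cV_{-1}$ and invoke the several-variable nilpotent orbit theorem of Cattani-Kaplan-Schmid \cite{Cattani_Kaplan_Schmid1986}: the Hodge filtration $\cF^\bullet$ extends across $D$ to a filtration of $\cV_{-1}$ by holomorphic subbundles, and the associated graded identifies with the canonical extension of each Hodge piece $H^{p,q}$. Applied piece by piece, the multi-graded norm estimate of Theorem \ref{thm_Hodge_metric_asymptotic} is valid for the full Hodge bundle, and the same argument preceding Proposition \ref{prop_adapted_frame} (using that $\exp(-\sum N_i\log z_i)$ is $L^\infty$-bounded) promotes the flat multi-graded basis to an $L^2$-adapted holomorphic frame $(\widetilde{v}_1,\dots,\widetilde{v}_N)$ of the extension of $H$ near any point of $D$, with weights $\alpha_{D_i}(\widetilde{v}_j)\in(-1,0]$.

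Given this frame, the first four properties are pure bookkeeping. Writing $s=\sum_j f_j\widetilde{v}_j$ and using the $L^\infty$ analogue of $L^2$-adaptedness (same proof), $s\in{_{A}}H(U)$ with $A=\sum a_iD_i$ if and only if each $f_j$ is holomorphic on $U$ and divisible along $D_i$ by $z_i^{m_{ij}}$, where
\begin{equation*}
	m_{ij}=\max\{m\in\bZ\mid m>-a_i-\alpha_{D_i}(\widetilde{v}_j)\}=-\lfloor a_i+\alpha_{D_i}(\widetilde{v}_j)\rfloor.
\end{equation*}
This gives explicit local generators $\{\prod_{i=1}^r z_i^{m_{ij}}\widetilde{v}_j\}$, so $_{A}H$ is locally free of rank $N$. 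The finite list $\{\alpha_{D_i}(\widetilde{v}_j)\}_{i,j}$ yields discreteness of the jumping locus in $\bR^l$; the ceiling/floor shift behaviour gives $_{A+\epsilon D_i}H={_A}H$ for $0<\epsilon\ll 1$, and multiplication by $z_i$ identifies $_{A}H\otimes\sO(-D_i)\xrightarrow{\sim}{_{A+D_i}}H$.

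The main obstacle is the last item, the logarithmicity of $\theta$. On $\cV_{-1}$ the flat connection $\nabla$ has logarithmic poles along $D$ by the nilpotent orbit theorem. Since $\cF^\bullet$ extends to subbundles of $\cV_{-1}$ and Griffiths transversality is preserved across $D$, the induced graded operator $\theta={\rm Gr}_{\cF^\bullet}\nabla$ is a morphism $\cV_{-1}\to\cV_{-1}\otimes\Omega_X(\log D)$ of degree $(-1,1)$, i.e.\ a logarithmic Higgs field on the canonical extension of $H$. The delicate point is to transfer this from the canonical extension (the case $A=0$) to arbitrary prolongations $_{A}H$: after expanding the frame as above, the matrix of $\theta$ gets conjugated by $\mathrm{diag}(z_i^{\alpha_{D_i}(\widetilde{v}_j)-\alpha_{D_i}(\widetilde{v}_k)})$, and one must check that these factors, together with the integer shifts $z_i^{m_{ij}-m_{ik}}$, combine with the original log pole $dz_i/z_i$ without destroying the logarithmic bound. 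Because all weight differences $\alpha_{D_i}(\widetilde{v}_j)-\alpha_{D_i}(\widetilde{v}_k)$ lie in $(-1,1)$, the property $_{A+\epsilon D_i}H={_A}H$ established above absorbs them, and $\theta({_A}H)\subset{_A}H\otimes\Omega_X(\log D)$ follows.
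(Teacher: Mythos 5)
The paper does not prove Theorem~\ref{thm_parabolic}: it cites Simpson \cite[Theorem 3]{Simpson1990} and Mochizuki \cite[Proposition 2.53]{Mochizuki2009} and states the result as background, so there is no ``paper's own proof'' to compare with. Your attempt is a genuine proof sketch, and its overall framework---reduce to the Cattani--Kaplan--Schmid asymptotics, build $L^2$-adapted frames, and read off the parabolic structure from explicit weights---is the right one; it is essentially Mochizuki's approach. However, two steps do not close.

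First, the claim that one obtains an $L^2$-adapted frame of the full Hodge bundle ``piece by piece'' from Proposition~\ref{prop_adapted_frame} glosses over a real difficulty. The paper's construction only treats the top piece $S(\bV)$, and it depends crucially on Lemma~\ref{lem_W_F}, a special fact about $\cF^{\max}$: the flat representatives of $S(\bV)$ avoid $W_{-1}$, which keeps the $\lambda_j$-factors in \eqref{align_L2adapted_frame} bounded below and lets $L^2$-adaptedness reduce to the fractional weights $\alpha_{D_i}$. For a general $H^{p,q}=\cF^p/\cF^{p+1}$ one is dealing with a quotient, not a subbundle, flat vectors of both positive and negative weight occur, and the comparison between the Hodge metric and the model metric coming from the several-variable $\mathrm{SL}_2$-orbit theorem is exactly what has to be proved. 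This is the content of Mochizuki's work, not a formal corollary of the special case.

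Second, the ``absorption'' argument for the last item is not correct. When you pass from the frame adapted to the canonical extension to the frame $\{\prod_i z_i^{m_{ij}}\widetilde{v}_j\}$ for $_AH$, the $(k,j)$-entry of the matrix of $\theta$ is multiplied by $\prod_i z_i^{m_{ik}-m_{ij}}$. Since $m_{ij}=-\lfloor a_i+\alpha_{D_i}(\widetilde{v}_j)\rfloor$ and $\alpha_{D_i}(\widetilde{v}_j)-\alpha_{D_i}(\widetilde{v}_k)\in(-1,1)$, the exponent $m_{ik}-m_{ij}$ can equal $-1$, which would turn a log pole $dz_i/z_i$ into a double pole; and the identity $_{A+\epsilon D_i}H={_A}H$ (stability under $\epsilon$-perturbation of the weight) cannot absorb a genuine $z_i^{-1}$ factor. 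What is actually needed is the assertion that $\theta$ preserves the parabolic filtration: the entry $\theta_{kj}$ must vanish to order $\geq 1$ whenever $m_{ij}>m_{ik}$. That is precisely the nontrivial content of Simpson's theorem and cannot be assumed. One honest way to get it is to observe that the residue of $\nabla$ commutes with $\nabla$ to leading order, so $\theta=\mathrm{Gr}_{\cF}\nabla$ respects the generalized eigenspace decomposition of $\mathrm{Res}_{D_i}\nabla$ and the associated weight filtrations; another is to invoke the sharp decay estimate $|\theta|_{h_Q}\lesssim \sum_i |dz_i|/(|z_i|\,|\log|z_i||)+\sum_j|dz_j|$ near the boundary, which immediately gives $\theta({_A}H)\subset{_A}H\otimes\Omega_X(\log D)$ from the defining norm bounds. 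Either of these needs to be supplied; as written, the argument is circular.
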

	The proof of the following lemma is straightforward and thus omitted here.
	\begin{lem}\label{lem_integral}
		Let $f$ be a holomorphic function on $\Delta^\ast:=\{z\in\bC|0<|z|<1\}$ and $a\in\bR$. Then
		$$\int_{|z|\leq\frac{1}{2}}|f|^2|z|^{2a}dzd\bar{z}<\infty$$
		if and only if $v(f)+a>-1$. Here
		$$v(f):=\min\{l|f_l\neq0\textrm{ in the Laurent expansion } f=\sum_{i\in\bZ}f_iz^i\}.$$
	\end{lem}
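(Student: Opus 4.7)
The plan is to factor out the leading Laurent term and reduce the question to a standard polar-coordinate integral. First I would use the Laurent expansion $f=\sum_{i\geq v(f)}f_iz^i$ (valid on $\Delta^\ast$ since $f$ is holomorphic there and $v(f)$ is assumed finite) to write $f(z)=z^{v(f)}g(z)$, where $g(z)=\sum_{j\geq 0}f_{v(f)+j}z^j$ extends to a holomorphic function on the full disc $\Delta$ with $g(0)=f_{v(f)}\neq 0$. This yields the pointwise identity
$$|f(z)|^2|z|^{2a}=|z|^{2(v(f)+a)}|g(z)|^2.$$
By continuity of $g$ there exist $0<\epsilon\leq 1/2$ and constants $0<c\leq M<\infty$ such that $c\leq|g(z)|\leq M$ on $\{|z|\leq\epsilon\}$, while $|g|$ remains bounded by some $M'$ on the compact annulus $\{\epsilon\leq|z|\leq 1/2\}$.

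After this reduction, both directions follow from the elementary polar-coordinate computation
$$\int_{|z|\leq\rho}|z|^{2(v(f)+a)}\,dz\,d\bar z\ =\ 4\pi\int_0^{\rho}r^{2(v(f)+a)+1}\,dr,$$
which is finite if and only if $2(v(f)+a)+1>-1$, i.e. $v(f)+a>-1$. For the ``if'' direction I would apply the global upper bound $|g|\leq \max(M,M')$ on $\{|z|\leq 1/2\}$ to dominate the full integral by the convergent power integral above with $\rho=1/2$. For the ``only if'' direction I would restrict the integral to the smaller disc $\{|z|\leq\epsilon\}$, use the lower bound $|g|\geq c$ to bound it below by $c^2\cdot 4\pi\int_0^{\epsilon}r^{2(v(f)+a)+1}dr$, and observe that this diverges whenever $v(f)+a\leq -1$, forcing the full integral to diverge as well.

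I do not anticipate any genuine obstacle; the statement is a routine Fubini plus polar-coordinates exercise. The only point requiring a small amount of care is ensuring $f_{v(f)}\neq 0$, so that the extended function $g$ is bounded \emph{below} near $0$ — this is built into the definition of $v(f)$ as a minimum over nonzero coefficients. (If one wished to drop the hypothesis that $f$ is meromorphic at $0$, one could alternatively use the monomial orthogonality $\int_0^{2\pi}z^i\bar z^j\,d\theta=0$ for $i\neq j$ to convert the integral into the diagonal series $2\pi\sum_i|f_i|^2\int_0^{1/2}r^{2(i+a)+1}dr$, and then read off the integrability of each term separately; but this refinement is not needed for the statement as given.)
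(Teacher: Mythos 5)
The paper omits a proof of this lemma (it is labelled ``straightforward''), so there is nothing to compare against; your argument is the standard one and is correct. One remark on hypotheses: the lemma only assumes $f$ holomorphic on $\Delta^\ast$, so $f$ could a priori have an essential singularity at $0$, in which case $v(f)=-\infty$ and the factorization $f=z^{v(f)}g$ with $g$ holomorphic on $\Delta$ is not available. Your parenthetical remark (Fourier orthogonality in $\theta$, reducing to the diagonal series $4\pi\sum_\ell |f_\ell|^2\int_0^{1/2}r^{2(\ell+a)+1}\,dr$) does handle this case --- if $v(f)=-\infty$ one simply picks $\ell_0$ with $f_{\ell_0}\neq 0$ and $\ell_0+a\le -1$, and the $\ell_0$-term already diverges --- so the proof is complete as written; I would just move that observation out of the parenthetical, since it is needed rather than optional.
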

	\begin{lem}\label{lem_Deligne_prolongation}
		Notations as above. Let $S(\bV)=\cF^{\max\{p|\cF^p\neq0\}}$. Then there is a natural isomorphism $$\cV_{-1}\cap j_\ast S(\bV)\simeq {_{<D}}S(\bV),$$
		where $j:X\backslash D\to X$ is the immersion and ${_{<D}}S(\bV)$ is taken with respect to the Hodge metric $h_Q$. Let $U\subset X$ be an open subset. Then a holomorphic section $s\in S(\bV)(U\backslash D)$ extends to a section in ${_{<D}}S(\bV)(U)$ if and only if the integration
		$$\int|s|^2_{h_Q}{\rm vol}_{ds^2}$$
		is finite locally at every point of $U\cap D$, where $ds^2$ is a hermitian metric on $X$.
	\end{lem}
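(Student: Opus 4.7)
The plan is to work locally, fixing a point $x\in D$, coordinates $z_1,\dots,z_n$ with $D=\{z_1\cdots z_r=0\}$, and the $L^2$-adapted holomorphic frame $(\widetilde v_1,\dots,\widetilde v_N)$ of $\cV_{-1}\cap j_\ast S(\bV)$ supplied by Proposition \ref{prop_adapted_frame}, which satisfies
$$|\widetilde v_j|^2_{h_Q}\sim \lambda_j\prod_{i=1}^r|z_i|^{2\alpha_{D_i}(\widetilde v_j)},\qquad \alpha_{D_i}(\widetilde v_j)\in(-1,0],\ 1\lesssim\lambda_j\lesssim|z_1\cdots z_r|^{-\epsilon}\ \forall\epsilon>0.$$
Both assertions of the lemma will follow once I establish, for a holomorphic section $s\in\Gamma(U\setminus D,S(\bV))$, the equivalence of three conditions: (a) $s\in\Gamma(U,{_{<D}}S(\bV))$; (b) $s$ is locally square integrable near $D$ with respect to $h_Q$ and a smooth hermitian metric on $X$; (c) in the expansion $s=\sum_j f_j\widetilde v_j$ with $f_j$ holomorphic on $U\setminus D$, every $f_j$ extends holomorphically across $D$, equivalently $s\in\Gamma(U,\cV_{-1}\cap j_\ast S(\bV))$.

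For (c)$\Rightarrow$(a), I would set $B^{(j)}:=\sum_i(-\alpha_{D_i}(\widetilde v_j))D_i$. Since $-\alpha_{D_i}(\widetilde v_j)\in[0,1)$, the divisor $B^{(j)}$ satisfies $B^{(j)}<D$, and the displayed estimate together with $\lambda_j^{1/2}\lesssim |z_1\cdots z_r|^{-\epsilon}$ for all $\epsilon>0$ shows $\widetilde v_j\in {_{B^{(j)}}}S(\bV)\subset {_{<D}}S(\bV)$; any $\sO_X$-linear combination of the $\widetilde v_j$ with holomorphic coefficients on $U$ is still in ${_{<D}}S(\bV)$. For (a)$\Rightarrow$(b), by definition $s\in\Gamma(U,{_{<D}}S(\bV))$ gives $|s|^2_{h_Q}=O(\prod_i|z_i|^{-2(b_i+\epsilon)})$ for some $b_i<1$ and all $\epsilon>0$; choosing $\epsilon$ so that $b_i+\epsilon<1$ and recalling $\int_{|z|\leq 1/2}|z|^{-2c}\,dz\,d\bar z<\infty$ whenever $c<1$ gives local square integrability of $s$.

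The main content is (b)$\Rightarrow$(c). Assuming $s$ is locally $L^2$, the $L^2$-adaptedness of $(\widetilde v_j)$ immediately yields that each individual summand $f_j\widetilde v_j$ is itself locally $L^2$. Combined with $\lambda_j\gtrsim 1$, this gives the local finiteness of
$$\int |f_j|^2\prod_{i=1}^r|z_i|^{2\alpha_{D_i}(\widetilde v_j)}\,dV$$
near every point of $D$. Expanding $f_j$ as a Laurent series in $z_1,\dots,z_r$ with holomorphic coefficients in $z_{r+1},\dots,z_n$ and using orthogonality of distinct monomials under polar integration in each $z_i$, the integral splits as a sum indexed by Laurent multi-indices $\beta\in\bZ^r$; a coordinate-wise application of Lemma \ref{lem_integral} forces every $\beta$ with nonzero Laurent coefficient to satisfy $\beta_i>-1-\alpha_{D_i}(\widetilde v_j)$ for every $i$. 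Since $\alpha_{D_i}(\widetilde v_j)\in(-1,0]$ gives $-1-\alpha_{D_i}(\widetilde v_j)\in[-1,0)$, integrality of $\beta_i$ yields $\beta_i\geq 0$, so $f_j$ extends holomorphically to $U$. The equivalence (a)$\Leftrightarrow$(c) then gives the first assertion, and (a)$\Leftrightarrow$(b) the $L^2$ criterion.

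The principal obstacle is the $L^2$-adaptedness step in (b)$\Rightarrow$(c): an a priori pointwise bound on $|s|^2_{h_Q}$ does not separate the contributions of the different frame vectors $\widetilde v_j$, whose parabolic weights $\alpha_{D_i}(\widetilde v_j)$ generally differ, so without the adapted property furnished by Proposition \ref{prop_adapted_frame} one cannot isolate each $f_j$ for a coordinate-wise application of Lemma \ref{lem_integral}. The remaining steps reduce to elementary integration estimates.
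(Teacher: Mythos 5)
Your proof is correct and follows essentially the same route as the paper: both establish $\cV_{-1}\cap j_\ast S(\bV)\subset {_{<D}}S(\bV)$ from the norm estimate in Proposition \ref{prop_adapted_frame} together with Lemma \ref{lem_integral}, and both prove the reverse inclusion (and the $L^2$ criterion) by passing through local square integrability and invoking the $L^2$-adaptedness of the frame $(\widetilde v_j)$ to isolate each coefficient $f_j$. Your write-up merely spells out the Laurent-expansion/orthogonality step and the chain of equivalences (a)$\Leftrightarrow$(b)$\Leftrightarrow$(c) more explicitly than the paper does.
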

	\begin{proof}
		It follows from Proposition \ref{prop_adapted_frame} and Lemma \ref{lem_integral} that 
		$$\cV_{-1}\cap j_\ast S(\bV)\subset {_{<D}}S(\bV).$$
		For the converse, let $\widetilde{v_1},\dots,\widetilde{v_N}$ be the $L^2$-adapted local frame of $\cV_{-1}\cap j_\ast S(\bV)$ (as in Proposition \ref{prop_adapted_frame}) at some point $x\in D$. Let $\alpha=\sum_{j=1}^N f_j\widetilde{v_j}\in {_{<D}}S(\bV)$ where $f_1,\dots,f_N$ are functions that are holomorphic outside $D$. By Lemma \ref{lem_integral}, $\alpha$ is locally square integrable at $x$. Hence, all $f_j\widetilde{v_j}$ are locally square integrable at $x$. According to \ref{prop_adapted_frame} and Lemma \ref{lem_integral}, it follows that the functions $f_1,\dots,f_N$ are holomorphic in some neighborhood of $x$. This proves
		$${_{<D}}S(\bV)\subset \cV_{-1}\cap j_\ast S(\bV)$$
		and the last claim of the lemma.
	\end{proof}
	\subsection{Prolongations of a variation of Hodge structure: general case}\label{section_prolongation_general} 
	The analytic prolongation of a variation of Hodge structure on a general base is defined via desingularization. 
	Let $X$ be a complex manifold and $Z\subset X$ a closed analytic subset. Let $D\subset Z$ be the union of the irreducible components of $Z$ whose codimension is one. Let $\pi:\widetilde{X}\to X$ be a functorial desingularization of the pair $(X,Z)$ so that $\widetilde{X}$ is smooth, $\pi^{-1}(Z)$ is a simple normal crossing divisor on $\widetilde{X}$ and 
	$$\pi^o:=\pi|_{\widetilde{X}^o}:\widetilde{X}^o:=\pi^{-1}(X\backslash Z)\to X^o:=X\backslash Z$$
	is biholomorphic.
	Let $\bV=(\cV,\nabla,\cF^\bullet,Q)$ be an $\bR$-polarized variation of Hodge structure of weight $w$ on $\widetilde{X}^o$ and $(H=\oplus_{p+q=w}H^{p,q},\theta,h_Q)$ the corresponding Higgs bundle with the Hodge metric $h_Q$. Let $A$ be an $\bR$-divisor supported on $\pi^{-1}(Z)$. Then $\pi_\ast({_{A}}H)$ is a torsion free coherent sheaf on $X$ whose restriction on $X^o$ is $(\pi^o)^{-1\ast}(H)$. By abuse of notation we still denote $\theta:=(\pi^o)^{-1\ast}(\theta)$. $\theta$ is a meromorphic Higgs field on $\pi_\ast({_{A}}H)$ with poles along $Z$. Let ${\rm Cryt}(\pi)\subset X$ be the degenerate loci of $\pi$. Since $\pi$ is functorial, $D\backslash {\rm Cryt}(\pi)$ is a simple normal crossing divisor on $X\backslash {\rm Cryt}(\pi)$ and the exceptional loci $\pi^{-1}({\rm Cryt}(\pi))$ is a simple normal crossing divisor on $\widetilde{X}$. $(\pi_\ast({_{A}}H),\theta)|_{X\backslash {\rm Cryt}(\pi)}$ is locally free and $\theta$ admits at most log poles along $D\backslash {\rm Cryt}(\pi)$. The following negativity result for $\ker(\theta)$ generalizes \cite{Zuo2000}. The main idea of its proof is due to Brunebarbe \cite{Brunebarbe2017}.
	\begin{prop}\label{prop_semipositive_kernel}
		Notations as above. Assume that ${\rm supp}(A)$ lies in the exceptional divisor $\pi^{-1}({\rm Cryt}(\pi))$. Let $K\subset \pi_\ast({_{A}}H)$ be a coherent subsheaf such that $\theta(K)=0$. Then $K^\vee$ is weakly positive in the sense of Viehweg \cite{Viehweg1983}.
	\end{prop}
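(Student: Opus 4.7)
The plan is to adapt Brunebarbe's curvature argument for kernels of Higgs fields \cite{Brunebarbe2017}, using the prolongation estimates of Section 2 to handle the birational setting. Since weak positivity is insensitive to modifications in codimension two and descends along proper birational pushforward of reflexive sheaves, I would first replace $K$ by its reflexive hull and work on $\widetilde{X}$: let $\widetilde{K}$ be the saturation in ${_{A}}H$ of the image of $\pi^\ast K/{\rm torsion}\to {_{A}}H$. Then $\widetilde{K}$ is a reflexive coherent subsheaf of ${_{A}}H$ annihilated by $\theta$, and $(\pi_\ast\widetilde{K})^{\vee\vee}=K^{\vee\vee}$, so it suffices to prove that $\widetilde{K}^\vee$ is weakly positive on $\widetilde{X}$.

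On the dense open set $\widetilde{X}^\circ:=\widetilde{X}\setminus\pi^{-1}(Z)$, the sheaf $\widetilde{K}|_{\widetilde{X}^\circ}$ is a holomorphic subbundle of the system of Hodge bundles $(H,\theta,h_Q)$ lying in $\ker\theta$. The standard Griffiths curvature calculation (as used by Zuo and Brunebarbe) then shows that the induced Hodge metric $h_Q|_{\widetilde{K}}$ is Griffiths seminegative, so the dual metric on $\widetilde{K}^\vee|_{\widetilde{X}^\circ}$ is Griffiths semipositive.

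To extend this semipositivity across $\pi^{-1}(Z)$, I would exploit the $L^2$-adapted local frames of Proposition \ref{prop_adapted_frame}: near any point of $\pi^{-1}(Z)$ the Hodge norms of local holomorphic sections of ${_{A}}H$ satisfy $|\widetilde{v}_j|^2\sim\lambda_j\prod_i|z_i|^{2\alpha_{D_i}(\widetilde{v}_j)}$ with $\lambda_j$ sandwiched between $1$ and $\prod_i|z_i|^{-\epsilon}$ for every $\epsilon>0$. Combined with the hypothesis ${\rm supp}(A)\subset\pi^{-1}({\rm Cryt}(\pi))$, this allows one to show that the dual metric $(h_Q|_{\widetilde{K}})^\vee$ extends to a singular hermitian metric on $\widetilde{K}^\vee$ whose local plurisubharmonic weights have at most analytic singularities and whose curvature current is globally semipositive on $\widetilde{X}$. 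One then concludes via the well-known criterion that a reflexive sheaf admitting a singular hermitian metric of semipositive curvature is weakly positive in the sense of Viehweg \cite{Viehweg1983}.

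The main obstacle will be this third step: verifying that the extension of the dual Hodge metric across $\pi^{-1}(Z)$ introduces no spurious negative masses and that the induced plurisubharmonic weights satisfy the precise regularity required for the weak positivity criterion. The hypothesis ${\rm supp}(A)\subset\pi^{-1}({\rm Cryt}(\pi))$ is essential here: it forces the parabolic weights to vanish over the locus where $\pi$ is an isomorphism, so the only boundary contribution comes from $\pi$-exceptional divisors, for which weak positivity, being a birational invariant, is unaffected.
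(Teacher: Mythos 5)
Your overall strategy (Griffiths curvature for $\ker\theta$, then extend the metric across the boundary and invoke a singular-metric criterion for weak positivity) is indeed the skeleton of the paper's argument. However, the extension step --- which you correctly flag as the main obstacle --- is carried out with the wrong tool, and the gap is genuine.

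You propose to control the Hodge norm near $\pi^{-1}(Z)$ using the $L^2$-adapted frames of Proposition \ref{prop_adapted_frame}. But that proposition is specifically about $\cV_{-1}\cap j_\ast S(\bV)$, the prolongation of the \emph{top} Hodge piece $S(\bV)=\cF^{\max\{p:\cF^p\neq 0\}}$; the two-sided estimate $1\lesssim\lambda_j\lesssim|z_1\cdots z_r|^{-\epsilon}$ hinges on Lemma \ref{lem_W_F}, which uses hard Lefschetz for the weight filtration and the fact that $\cF^{p+l}=0$ for $l\geq 1$. For a subsheaf $K$ sitting inside general graded pieces of ${}_A H$ that lemma does not apply, and Hodge norms of such sections can tend to zero (negative weight). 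Moreover, even if the estimate did apply, an upper bound of the form $|z|^{-\epsilon}$ for all $\epsilon>0$ is still unbounded, and the Hartogs-type extension for plurisubharmonic functions \cite[Lemma 12.4]{HPS2018} used in the paper requires genuine local boundedness from above of $\log|s|_{h_Q}$, not just sub-polynomial growth.

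The missing ingredient is the actual heart of the argument: since $\theta(s)=0$, the flat section corresponding to $s$ lies in $W_0$ of the monodromy weight filtration at a general point of each boundary divisor (Schmid \cite[Corollary 6.7]{Schmid1973}, see also \cite[Lemma 5.4]{Brunebarbe2017}). Because ${\rm supp}(A)\subset\pi^{-1}({\rm Cryt}(\pi))$ and ${\rm Cryt}(\pi)$ has codimension $\geq 2$ in $X$, on $X\setminus{\rm Cryt}(\pi)$ one has $\pi_\ast({}_A H)\simeq {}_{\bm 0}H$, so $s$ lies in the parabolic-weight-zero prolongation. Combining $s\in W_0$ with $s\in{}_{\bm 0}H$ and Simpson's norm estimate gives an honest local bound on $|s|_{h_Q}$ in codimension one on $X$ --- not merely sub-polynomial growth. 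This is what lets $\log|s|_{h_Q}$ extend plurisubharmonically and hence gives a semi-negatively curved singular metric on $K$ itself (then dualize). Note also that the paper works directly on $X$, where ${\rm Cryt}(\pi)$ has codimension $\geq 2$ and only the divisor $D\setminus{\rm Cryt}(\pi)$ matters; your preliminary reduction to $\widetilde X$ is not needed and would force you to contend with the exceptional divisor and the nontrivial parabolic weights of $A$ there.
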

	\begin{proof}
		We follow the notion of singular hermitian metrics on torsion free coherent sheaves (as in \cite{Paun2018,HPS2018}).
		The Hodge metric $h_Q$ defines a singular hermitian metric on the bundle $_{A}H$, with singularities along $\pi^{-1}(Z)$. Since $\pi^o$ is biholomorphic, we may regard $h_Q$ as a singular hermitian metric on the torsion free coherent sheaf $\pi_\ast({_{A}}H)$. Let $K^o:=K|_{X^o}$. By Griffiths' curvature formula
		$$\Theta_{h_Q}(H)+\theta\wedge\overline{\theta}+\overline{\theta}\wedge\theta=0,$$
		one knows that
		$$\Theta_{h_Q}(K^o)=-\theta\wedge\overline{\theta}|_{K^o}+\overline{B}\wedge B$$
		is Griffiths semi-negative, where $B\in A^{1,0}_{X^o}(K^o,K^{o\bot})$ is the second fundamental class. We claim that the hermitian metric $h_Q|_{X^o}$ extends to a singular hermitian metric on $K$ with semi-negative curvatures. It suffices to prove that $\log |s|_{h_Q}$ can be extended to a plurisubharmonic function on $X$ for an arbitrary section $s\in K$.
		
		Since $\Theta_{h_Q}(K^o)$ is Griffiths semi-negative, $\log|s|_{h_Q}$ is a smooth plurisubharmonic function on $X^o$. By  Riemannian extension theorem and Hartogs extension theorem for plurisubharmonic functions \cite[Lemma 12.4]{HPS2018}, it suffices to show that $\log|s|_{h_Q}$ is locally bounded from above in codimension one. Let ${\rm Cryt}(\pi)\subset X$ be the degenerate loci of $\pi$, which is of codimension $\geq 2$. Then $D\backslash {\rm Cryt}(\pi)$ is a simple normal crossing divisor on $X\backslash {\rm Cryt}(\pi)$. The assumption on $A$ yields that $\pi_\ast({_{A}}H)|_{X\backslash {\rm Cryt}(\pi)}\simeq{_{\bm{0}}}((\pi^{o})^{-1\ast}H)$, where $\bm{0}$ is the zero divisor on $X\backslash {\rm Cryt}(\pi)$. Let $x$ be a general point of a component $D_i$ of $D\backslash {\rm Cryt}(\pi)$. Let $N_i$ be the monodromy operator around $D_i$ associated with the connection $((\pi^{o})^{-1\ast}\cV,(\pi^{o})^{-1\ast}\nabla)$ and let $\{W_k\}_{k\in\bZ}$ be the monodromy weight filtration determined by $N_i$. Since $\theta(s)=0$, one has $s\in W_0$ according to \cite[Corollary 6.7]{Schmid1973} (see also \cite[Lemma 5.4]{Brunebarbe2017}). Combining it with the fact that $s\in {_{\bm{0}}}((\pi^{o})^{-1\ast}H)$, it follows from Simpson's norm estimate \cite[page 721]{Simpson1990} that $|s|_{h_Q}$ is locally bounded near $x$. This implies the claim that $h_Q$ extends (uniquely) to a singular hermitian metric on $K$ with semi-negative curvature. Hence $K^{\vee}$ is weakly positive in the sense of Viehweg by \cite[Theorem 2.5.2]{PT2018}.
	\end{proof}
	\subsection{Prolongations of a variation of Hodge structure of geometric origin}
	Let $f:Y\to X$ be a proper holomorphic morphism between complex manifolds and let $n:=\dim X-\dim Y$. Let $Z\subset X$ be a closed analytic subset such that $f$ is a K\"ahler submersion over $X^o:=X\backslash Z$. Let $Y^o:=f^{-1}(X^o)$ and $f^o:=f|_{Y^o}:Y^o\to X^o$. Then $R^nf^o_\ast(\bR_{Y^o})$ underlies an $\bR$-polarized variation of Hodge structure $\bV^n_{f^o}=(\cV^n,\nabla,\cF^\bullet,Q)$ of weight $n$. Here $\cV^n\simeq R^nf^o_\ast(\bR_{Y^o})\otimes_{\bR}\sO_{X^o}$, $\nabla$ is the Gauss-Manin connection, $\cF^p\simeq R^nf^o_\ast(\Omega^{\geq p}_{Y^o/X^o})$ and $Q$ is the $\bR$-polarization associated with a relative K\"ahler form. Let $h_Q$ be the Hodge metric associated with $Q$ and let $(H^n_{f^o}=\oplus_{p+q=n}H^{p,q}_{f^o},\theta)$ be the Higgs bundle associated with $\bV^n_{f^o}$ where $H^{p,q}_{f^o}\simeq R^qf^o_\ast(\Omega^p_{Y^o/X^o})$. Let $\omega_{Y/X}:=\omega_Y\otimes f^{\ast}(\omega_X^{-1})$ be the relative dualizing sheaf.
	\begin{lem}\label{lem_prolongation0_vs_geo}
		Notations as above. If $Z$ is a reduced simple normal crossing divisor, then there is an isomorphism $$f_\ast(\omega_{Y/X})\simeq {_{<Z}}H^{n,0}_{f^o}.$$
	\end{lem}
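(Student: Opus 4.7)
My plan is to realize both $f_\ast\omega_{Y/X}$ and ${_{<Z}}H^{n,0}_{f^o}$ as subsheaves of $j_\ast f^o_\ast\omega_{Y^o/X^o}$ (where $j:X^o\hookrightarrow X$) and to check both inclusions locally on $X$. The identification $H^{n,0}_{f^o}\simeq R^0f^o_\ast\Omega^n_{Y^o/X^o}=f^o_\ast\omega_{Y^o/X^o}$ realizes $H^{n,0}_{f^o}$ as the top piece $S(\bV^n_{f^o})$ of the Hodge filtration, so Lemma~\ref{lem_Deligne_prolongation} characterizes ${_{<Z}}H^{n,0}_{f^o}(U)$ as the set of holomorphic relative $n$-forms on $f^{-1}(U\cap X^o)$ whose Hodge-metric square $|\cdot|^2_{h_Q}$ is locally $L^1$ on $U$. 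The second input I will use is the classical identification, on $H^{n,0}_{f^o}$, of $h_Q$ with the fiberwise $L^2$-pairing up to a positive universal constant $c_n$: $|\sigma|^2_{h_Q}(x)=c_n\int_{Y_x}\sigma(x)\wedge\overline{\sigma(x)}$ for $x\in X^o$.

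For the forward inclusion $f_\ast\omega_{Y/X}\subseteq {_{<Z}}H^{n,0}_{f^o}$, I will pick a relatively compact open $U\subset X$ with a smooth hermitian volume form $dV_U$, and interpret a section $\sigma\in f_\ast\omega_{Y/X}(U)$ as a holomorphic section of the line bundle $\omega_{Y/X}$ on $f^{-1}(U)$. The tensor $\sigma\wedge\overline\sigma\otimes f^\ast dV_U$ is a smooth section of the volume form bundle $\omega_Y\otimes\overline{\omega_Y}$ on the relatively compact open $f^{-1}(U)$. Fubini applied to the proper map $f|_{f^{-1}(U)}$ will then give
\begin{align*}
\int_U|\sigma|^2_{h_Q}\,dV_U\;=\;c_n\int_{f^{-1}(U)}\sigma\wedge\overline\sigma\wedge f^\ast dV_U\;<\;\infty,
\end{align*}
so that $|\sigma|^2_{h_Q}$ is locally $L^1$ on $U$ and Lemma~\ref{lem_Deligne_prolongation} places $\sigma$ in ${_{<Z}}H^{n,0}_{f^o}(U)$.

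For the reverse inclusion ${_{<Z}}H^{n,0}_{f^o}\subseteq f_\ast\omega_{Y/X}$, I will start with $\sigma\in {_{<Z}}H^{n,0}_{f^o}(U)$, a holomorphic relative $n$-form on $f^{-1}(U\cap X^o)$ with locally $L^1$ Hodge-metric norm squared, and extend $\sigma$ holomorphically across $f^{-1}(Z)\cap f^{-1}(U)$ as a section of the line bundle $\omega_{Y/X}$ on the smooth total space $Y$. Hartogs's theorem will handle codimension $\geq 2$ extension automatically, so only generic points of codimension-one components of $Z$ need attention. Near such a point I will choose local coordinates $(z,w)$ on $U$ with $Z=\{z=0\}$ and combine the asymptotic $L^2$-adapted frame description of ${_{<Z}}H^{n,0}_{f^o}=\cV^n_{-1}\cap j_\ast S(\bV^n_{f^o})$ from Proposition~\ref{prop_adapted_frame} with the one-variable integrability criterion Lemma~\ref{lem_integral} to control the Laurent expansion of $\sigma$ in $z$ and force the negative coefficients to vanish.

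The hard part will be precisely this reverse inclusion. The measure $f^\ast dV_U$ is degenerate along the fibers of $f$, so a direct $L^2$-Laurent argument on the total space is unavailable, and one has to route the extension through the Hodge-theoretic side. My strategy is to read off the integrability through the $L^2$-adapted frames of ${_{<Z}}H^{n,0}_{f^o}$, which carry the precise pole-order data from the monodromy weight filtration, and then translate this information back to $\sigma$ as a holomorphic section of $\omega_{Y/X}$ along a generic transversal slice to a codimension-one component of $Z$, where Lemma~\ref{lem_integral} directly gives the vanishing of negative-order Laurent coefficients and hence the holomorphic extension of $\sigma$ to $f^{-1}(U)$.
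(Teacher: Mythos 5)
Your forward inclusion $f_\ast\omega_{Y/X}\subseteq{_{<Z}}H^{n,0}_{f^o}$ is correct and matches the paper's mechanism: Fubini for the proper map $f$ identifies $\int_U|\sigma|^2_{h_Q}\,dV_U$ with $c_n\int_{f^{-1}(U)}(\sigma\wedge f^\ast\phi)\wedge\overline{(\sigma\wedge f^\ast\phi)}$ (where $\phi$ trivializes $\omega_X$ and $dV_U\sim\phi\wedge\bar\phi$), the latter is finite because the integrand extends to a continuous top-degree form on the compact $f^{-1}(\bar U)$, and Lemma~\ref{lem_Deligne_prolongation} then gives membership.

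The reverse inclusion contains a genuine gap, and the diagnosis you offer is in fact what creates it. You dismiss the direct argument by observing that $f^\ast dV_U$ is degenerate along the fibers. But the object your own Fubini identity singles out is not $f^\ast dV_U$; it is $(\sigma\wedge f^\ast\phi)\wedge\overline{(\sigma\wedge f^\ast\phi)}$, a bona fide nonnegative top-degree form on $Y^o$, since $\sigma\wedge f^\ast\phi$ is a holomorphic $\dim Y$-form. Thus $\sigma\in{_{<Z}}H^{n,0}_{f^o}(U)$, read through Lemma~\ref{lem_Deligne_prolongation} and Fubini, says precisely that $\sigma\wedge f^\ast\phi$ is locally $L^2$ on $Y^o$ at every point of $f^{-1}(Z)$, and the one remaining step is the classical fact that a holomorphic top-form on the complement of an analytic set that is locally $L^2$ extends holomorphically across that set; this is exactly \cite[Proposition~16]{Kawamata1981}, which is what the paper invokes, and it can also be done by hand via Riemann extension for the local coefficient function of $\sigma\wedge f^\ast\phi$ in coordinates on $Y$. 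Your substitute route through $L^2$-adapted frames and a transversal slice in $X$ is circular: that $\sigma$ has holomorphic coefficients in the frame $\widetilde v_j$ of $\cV_{-1}\cap j_\ast S(\bV^n_{f^o})$ is \emph{equivalent} to the hypothesis $\sigma\in{_{<Z}}H^{n,0}_{f^o}$ (that is the content of Lemma~\ref{lem_Deligne_prolongation}) and says nothing about extendibility of $\sigma$ as a section of $\omega_{Y/X}$ on $Y$ unless you already know that the $\widetilde v_j$ themselves extend to sections of $f_\ast\omega_{Y/X}$ — which is the conclusion. Likewise Lemma~\ref{lem_integral} is a criterion for a single function on $\Delta^\ast\subset X$, whereas $\sigma$ lives on $Y$; restricting to a slice still leaves a positive-dimensional family over the slice and the same extension problem. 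The proof you are looking for is to run the Fubini identity in both directions and close with the Kawamata extension criterion on $Y$.
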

	\begin{proof}
		Let $j:X^o\to X$ be the open immersion.
		It suffices to show that
		${_{<Z}}H^{n,0}_{f^o}\otimes\omega_X=f_\ast(\omega_{Y})$ as subsheaves of $j_\ast H^{n,0}_{f^o}\otimes\omega_X$. Let $s$ be a local section of $ j_\ast H^{n,0}_{f^o}=j_\ast f^o_\ast(\omega_{Y^o/X^o})$. Let $\phi=dz_1\wedge\cdots\wedge dz_{d}$ where $z_1,\dots,z_d$ are holomorphic local coordinates on $X$. According to Lemma \ref{lem_Deligne_prolongation}, $s\in {_{<Z}}H^{n,0}_{f^o}$ if and only if the integral
		$$\int_{X^o}|s|^2_{h_Q}\phi\wedge\overline{\phi}=\epsilon_n\int_{X^o}\left(\int_{f^{-1}\{x\}}s|_{f^{-1}\{x\}}\wedge\overline{s|_{f^{-1}\{x\}}}\right)\phi\wedge\overline{\phi}=\epsilon_n\int_{Y^o}(s\wedge f^{o\ast}(\phi))\wedge\overline{s\wedge f^{o\ast}(\phi)}$$
		is finite locally at every point of $Z$, where $\epsilon_n=(-1)^{\frac{n(n-1)}{2}}(\sqrt{-1})^n$.
		The locally finiteness of the right handside is equivalent to that $s\wedge f^{o\ast}(\phi)$ admits a holomorphic extension to $Y$ (c.f. \cite[Proposition 16]{Kawamata1981}). This proves that  ${_{<Z}}H^{n,0}_{f^o}\otimes\omega_X=f_\ast(\omega_{Y})$.
	\end{proof}
	Let us return to the general case. Consider the diagram
	\begin{align}
		\xymatrix{
			Y'\ar[r]^{\sigma} \ar[d]^{f'} & Y \ar[d]^f\\
			X'\ar[r]^{\pi} &X
		}
	\end{align}
	such that the following conditions hold.
	\begin{itemize}
		\item $\pi:X'\to X$ is a desingularization of the pair $(X,Z)$. In particular, $X'$ is smooth, $\pi^{-1}(Z)$ is a simple normal crossing divisor and $\pi^o:=\pi|_{\pi^{-1}(X^o)}:\pi^{-1}(X^o)\to X^o$ is biholomorphic.
		\item $Y'$ is a functorial desingularization of the main component of $Y\times_XX'$. In particular $Y'\to Y\times_XX'$ is biholomorphic over $f^{-1}(X^o)\times_{X^o}\pi^{-1}(X^o)$.
	\end{itemize}
	Let $\omega_{X'}\simeq\pi^{\ast}\omega_X\otimes\sO_{X'}(E)$
	for some exceptional divisor $E$ of $\pi$. We obtain the natural morphisms
	\begin{align}\label{align_pullback_pushforward0}
		\pi^\ast(f_\ast(\omega_{Y/X}))\simeq\pi^\ast(f_\ast(\omega_Y)\otimes\omega_X^{-1})\to f'_\ast(\omega_{Y'})\otimes\omega^{-1}_{X'}\otimes\sO_{X'}(E)\simeq f'_\ast(\omega_{Y'/X'})\otimes\sO_{X'}(E).
	\end{align}
	Define $$f'^o:=f'|_{\sigma^{-1}(f^{-1}(X^o))}:\sigma^{-1}(f^{-1}(X^o))\to\pi^{-1}(X^o),$$  a proper K\"ahler submersion since $\pi^o$ is biholomorphic. Let  $H^n_{f'^o}$ be the Higgs bundle associated with $f'^o$. Lemma \ref{lem_prolongation0_vs_geo} yields that
	\begin{align*}
		f'_\ast(\omega_{Y'/X'})\simeq {_{<\pi^{-1}(Z)_{\rm red}}}H^{n,0}_{f'^o}.
	\end{align*}
	Combining it with (\ref{align_pullback_pushforward0}), we obtain a generically injective morphism
	$$f_\ast(\omega_{Y/X})\to \pi_\ast({_{<\pi^{-1}(Z)_{\rm red}}}H^{n,0}_{f'^o}\otimes \sO_{X'}(E))\simeq \pi_\ast({_{<\pi^{-1}(Z)_{\rm red}+E}}H^{n,0}_{f'^o}).$$
	Since $f_\ast(\omega_{Y/X})$ is torsion free, the above map must be injective. Thus we have concluded the following result.
	\begin{prop}\label{prop_prolongation0_vs_geo}
		Notations as above. Then there is an inclusion $$f_\ast(\omega_{Y/X})\subset \pi_\ast({_{<\pi^{-1}(Z)_{\rm red}+E}}H^{n,0}_{f'^o}).$$
	\end{prop}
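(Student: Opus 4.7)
The plan is to assemble the proposition directly from the construction carried out in the paragraph preceding the statement, combining the base-change morphism of relative dualizing sheaves with Lemma \ref{lem_prolongation0_vs_geo} applied to the smooth model $f':Y'\to X'$, and then pushing forward along $\pi$.

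First, I would nail down a natural morphism
$$\pi^\ast(f_\ast\omega_{Y/X})\longrightarrow f'_\ast(\omega_{Y'/X'})\otimes\sO_{X'}(E).$$
The idea is to start from the adjunction / base-change morphism $\pi^\ast f_\ast\omega_Y\to f'_\ast\sigma^\ast\omega_Y$ coming from the Cartesian-like diagram, then compose with the canonical map $\sigma^\ast\omega_Y\to\omega_{Y'}$ (available because $\sigma$ factors through $Y\times_XX'\to Y$ and the functorial desingularization $Y'\to Y\times_XX'$ adjusts $\omega$ by an effective exceptional divisor), and finally twist using $\omega_{X'}\simeq \pi^\ast\omega_X\otimes\sO_{X'}(E)$ to convert absolute into relative dualizing sheaves on both sides. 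Over the open set $X^o$, where $\pi^o$ and $\sigma|_{Y^o\times_{X^o}\pi^{-1}(X^o)}$ are biholomorphic, this morphism is an isomorphism.

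Next, I would apply Lemma \ref{lem_prolongation0_vs_geo} to the proper Kähler submersion $f'^o$ over the complement of the simple normal crossing divisor $\pi^{-1}(Z)$ in $X'$ (its discriminant locus is contained in $\pi^{-1}(Z)_{\rm red}$), obtaining
$$f'_\ast(\omega_{Y'/X'})\simeq {_{<\pi^{-1}(Z)_{\rm red}}}H^{n,0}_{f'^o}.$$
Tensoring with $\sO_{X'}(E)$ and invoking the shift property $_{A+D_i}H={_A}H\otimes\sO(-D_i)$ from Theorem \ref{thm_parabolic} converts this into
$$f'_\ast(\omega_{Y'/X'})\otimes\sO_{X'}(E)\simeq {_{<\pi^{-1}(Z)_{\rm red}+E}}H^{n,0}_{f'^o}.$$

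Finally, applying $\pi_\ast$ to the displayed morphism and using the $(\pi^\ast,\pi_\ast)$-adjunction yields a natural morphism
$$f_\ast(\omega_{Y/X})\longrightarrow\pi_\ast\!\left({_{<\pi^{-1}(Z)_{\rm red}+E}}H^{n,0}_{f'^o}\right),$$
which is an isomorphism over $X^o$, hence generically injective. Since $f_\ast(\omega_{Y/X})$ is a torsion-free coherent sheaf on the complex manifold $X$, any generically injective morphism out of it is in fact injective, which is the desired inclusion.

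The main technical obstacle is the construction of the natural morphism in the first step: one must verify that the base-change/adjunction morphism truly lands in the relative dualizing sheaf on $Y'$ with only an exceptional correction $E$ on the base, given that $\sigma:Y'\to Y$ is not birational but factors through the base change $Y\times_XX'$. This is handled by a straightforward but careful combination of the projection formula, the effectiveness of $K_{Y'/Y\times_XX'}$ coming from the functorial desingularization, and the formula $\omega_{X'}\simeq\pi^\ast\omega_X\otimes\sO_{X'}(E)$; no deeper input is required.
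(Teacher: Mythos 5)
Your proposal reproduces the paper's own argument essentially step for step: construct the natural morphism $\pi^\ast(f_\ast\omega_{Y/X})\to f'_\ast(\omega_{Y'/X'})\otimes\sO_{X'}(E)$ using the relative dualizing sheaf comparison $\omega_{X'}\simeq\pi^\ast\omega_X\otimes\sO_{X'}(E)$, identify $f'_\ast(\omega_{Y'/X'})$ with the analytic prolongation via Lemma \ref{lem_prolongation0_vs_geo}, absorb $\sO_{X'}(E)$ into the prolongation, push forward along $\pi$, and upgrade generic injectivity to injectivity via torsion-freeness of $f_\ast(\omega_{Y/X})$. The only cosmetic difference is that you unpack the base-change map into its unit/counit and $\sigma^\ast\omega_Y\to\omega_{Y'}$ constituents (the latter holding simply because $\sigma$ is proper birational between smooth manifolds, so routing through the possibly singular $Y\times_X X'$ is unnecessary), which the paper writes in a single displayed line.
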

	\section{Analytic prolongations of Viehweg-Zuo Higgs sheaves}\label{section_Analytic_prolongation_VZ}
	In this section we generalize Viehweg-Zuo's construction of Higgs sheaves using analytic prolongations (Theorem \ref{thm_VZ_prolongation}). 
	\subsection{Setting}\label{section_setting}
	Throughout this section let us fix a proper holomorphic morphism $f:Y\to X$ between complex manifolds with  $n=\dim Y-\dim X$ the relative dimension. Assume that there is a simple normal crossing divisor $D_f\subset X$ such that $f^o:=f|_{Y^o}:Y^o\to X^o$ is a K\"ahler submersion where $X^o:=X\backslash D_f$ and $Y^o:=f^{-1}(X^o)$. We fix a torsion free coherent sheaf $L$ on $X$ that is invertible on $X^o$ (hence ${\rm rank}(L)=1$), and a nonzero morphism
	\begin{align}\label{align_s}
		s_L:L^{\otimes k}\to f_\ast(\omega_{Y/X}^{\otimes k})
	\end{align}
	for some $k\geq 1$. 
	\subsection{Viehweg-Zuo Higgs sheaves}\label{section_VZ_sheaf}
	Notations as in \S \ref{section_setting}. Let $L^{\vee\vee}$ be the reflexive hull of $L$, with $L\to L^{\vee\vee}$ the natural inclusion map. Since ${\rm rank}(L^{\vee\vee})=1$, $L^{\vee\vee}$ is an invertible sheaf. Since $L$ is torsion free and is invertible on $X^o$, $\sI_T:=L\otimes(L^{\vee\vee})^{-1}\subset\sO_X$ is a coherent ideal sheaf whose co-support lies in a closed analytic subset $T\subset D_f$ such that ${\rm codim}_X(T)\geq 2$. Consider a diagram
	\begin{align}\label{align_setting}
		\xymatrix{
			\widetilde{Y}\ar[r]^{\sigma}\ar[d]_{\tilde{f}} & Y\ar[d]^f\\
			\widetilde{X}\ar[r]^{\pi} & X
		}
	\end{align}
	of holomorphic maps between complex manifolds such that the following conditions hold. 
	\begin{itemize}
		\item $\pi$ is a functorial desingularization of $(X,T,D_f)$ in the sense of W{\l}odarczyk \cite{Wlodarczyk2009}. In particular,  $\widetilde{X}$ is a compact complex manifold, $\pi$ is a projective morphism that is biholomorphic over $X\backslash T$. $\pi^{-1}(D_f)$, $E:=\pi^{-1}(T)_{\rm red}$ and $\pi^{-1}(D_f)\cup E$ are simple normal crossing divisors. 
		\item $\widetilde{Y}$ is a functorial desingularization of the main component of $Y\times_X\widetilde{X}$. In particular,  $\widetilde{Y}\to Y\times_X\widetilde{X}$ is biholomorphic over $f^{-1}(X\backslash T)\times_{X\backslash T}\pi^{-1}(X\backslash T)$.
	\end{itemize}
	Since $\pi$ is biholomorphic on $\widetilde{X}\backslash E$, there is a constant $k_0\geq 0$ and a natural map
	\begin{align*}
		\pi^{\ast}f_\ast(\omega_{Y/X}^{\otimes k})\otimes\sO_{\widetilde{X}}(-k_0kE)\to \widetilde{f}_\ast(\omega^{\otimes k}_{\widetilde{Y}/\widetilde{X}}).
	\end{align*}
	Taking (\ref{align_s}) into account, we obtain a non-zero morphism
	\begin{align*}
		\pi^{\ast}(L^{\vee\vee})^{\otimes k}\otimes\pi^\ast(\sI_T)^{\otimes k}\simeq\pi^{\ast}L^{\otimes k}\to \pi^\ast(f_\ast(\omega_{Y/X}^{\otimes k}))\to \widetilde{f}_\ast(\omega^{\otimes k}_{\widetilde{Y}/\widetilde{X}})\otimes\sO_{\widetilde{X}}(k_0kE). 
	\end{align*}
	Hence there is an effective divisor $\widetilde{E}$, supported on $E$, such that there is a nonzero map
	\begin{align*}
		\pi^{\ast}(L^{\vee\vee})^{\otimes k}\otimes\sO_{\widetilde{X}}(-k\widetilde{E})\to  \widetilde{f}_\ast(\omega^{\otimes k}_{\widetilde{Y}/\widetilde{X}}). 
	\end{align*}
	Let $\widetilde{L}:=\pi^{\ast}(L^{\vee\vee})\otimes\sO_{\widetilde{X}}(-\widetilde{E})$ and $L^o:=L|_{X^o}$. Let $\pi^o:=\pi|_{\pi^{-1}(X^o)}:\pi^{-1}(X^o)\to X^o$. The arguments above show that there is a non-zero morphism
	\begin{align}
		s_{\widetilde{L}}:\widetilde{L}^{\otimes k}\to \widetilde{f}_\ast(\omega^{\otimes k}_{\widetilde{Y}/\widetilde{X}})
	\end{align} 
	and an isomorphism
	\begin{align}\label{align_iso_Lo_L}
		\pi^{o\ast}(L^o)\simeq\widetilde{L}|_{\pi^{-1}(X^o)}
	\end{align} 
	such that the diagram 
	\begin{align}\label{align_commut_s_s0}
		\xymatrix{
			\widetilde{L}^{\otimes k}|_{\pi^{-1}(X^o)}\ar[rr]^-{s_{\widetilde{L}}|_{\pi^{-1}(X^o)}}&& \widetilde{f}_\ast(\omega^{\otimes k}_{\widetilde{Y}/\widetilde{X}})|_{\pi^{-1}(X^o)}
			\\
			\pi^{o\ast}(L^o)^{\otimes k}\ar[u]^{\simeq}\ar[rr]^-{\pi^{o\ast}(s_L|_{X^o})} && \pi^{o\ast}f^o_\ast(\omega^{\otimes k}_{Y^o/X^o})\ar[u]^{\simeq}
		}
	\end{align}
	is commutative. 
	Define 
	$$B^o=\omega_{Y^o/X^o}\otimes f^{o\ast}(L^{o})^{-1},$$
	a line bundle on $Y^o$ and 
	$$\widetilde{B}=\omega_{\widetilde{Y}/\widetilde{X}}\otimes \tilde{f}^\ast(\widetilde{L}^{-1}),$$
	a line bundle on $\widetilde{Y}$. Then the map $s_{\widetilde{L}}$ determines a non-zero section
	$\widetilde{s}\in H^0(\widetilde{Y},\widetilde{B}^{\otimes k})$.
	Let $\varpi:\widetilde{Y}_k\to \widetilde{Y}$ be the $k:1$ cyclic covering map that is branched along $\{\widetilde{s}=0\}$ and let $\mu:Z\to \widetilde{Y}_k$ be a functorial desingularization that is biholomorphic over the complement of $\{\varpi^\ast\widetilde{s}=0\}$.  The morphisms are gathered in the following diagram
	\begin{align*}
		\xymatrix{
			Z\ar[r]^{\mu}\ar[drr]^g & \widetilde{Y}_k \ar[r]^{\varpi} & \widetilde{Y}\ar[d]^{\widetilde{f}}\ar[r]^{\sigma}& Y\ar[d]^f\\
			&& \widetilde{X}\ar[r]^\pi & X
		},
	\end{align*}
	where $g$ denotes $\widetilde{f}\varpi\mu$.
	Let $D_g\subset\widetilde{X}$ be a reduced closed analytic subset  containing $\pi^{-1}(D_f)$, such that $g$ is a submersion over $\widetilde{X}^o:=\widetilde{X}\backslash D_g$. Let $Z^o:=g^{-1}(\widetilde{X}^o)$ and let $g^o:=g|_{Z^o}:Z^o\to \widetilde{X}^o$. Since $\mu$ and $\varpi$ are projective morphisms, $g^o$ is a proper K\"ahler submersion. 
	Consider the diagram
	\begin{align}
		\xymatrix{
			Z'\ar[r]_{\sigma'} \ar[d]^{h} \ar@/^/[rr]^{\varphi}& Z \ar[d]^g\ar[r]_{\sigma\varpi\mu}& Y\ar[d]^f\\
			X'\ar[r]^{\rho} \ar@/_/[rr]_{\psi}&\widetilde{X}\ar[r]^\pi &X
		}
	\end{align}
	where $\varphi:=\sigma\varpi\mu\sigma'$ and $\psi:=\pi\rho$, such that the following conditions hold.
	\begin{itemize}
		\item $\rho:X'\to \widetilde{X}$ is a functorial desingularization of $(\widetilde{X},D_g,\pi^{-1}(D_f))$. In particular, $X'$ is smooth, $\rho^{-1}(D_g)$, $\psi^{-1}(D_f)$ and $\rho^{-1}(D_g)\cup \psi^{-1}(D_f)$ are simple normal crossing divisors and $\rho^o:=\rho|_{\rho^{-1}(\widetilde{X}^o)}:\rho^{-1}(\widetilde{X}^o)\to \widetilde{X}^o$ is biholomorphic.
		\item $Z'$ is a functorial desingularization of the main component of $Z\times_{\widetilde{X}}X'$. In particular $Z'\to Z\times_{\widetilde{X}}X'$ is biholomorphic over $Z^o\times_{\widetilde{X}^o}\rho^{-1}(\widetilde{X}^o)$.
	\end{itemize}
	Let $X'^o:=\rho^{-1}(\widetilde{X}^o)$, $Z'^o:=h^{-1}(X'^o)$ and $h^o:=h|_{Z'^o}:Z'^o\to X'^o$.
	Notice that $h^o$ is a proper K\"ahler submersion of relative dimension $n$, which is the pullback of the family $g^o:Z^o\to\widetilde{X}^o$ via the isomorphism $\rho^o:X'^o\to\widetilde{X}^o$.
	Then $R^nh^o_\ast(\bR_{Z'^o})$ underlies an $\bR$-polarized variation of Hodge structure of weight $n$ on $X'^o$. Let $(H^n_{h^o}=\bigoplus_{p=0}^n H^{p,n-p}_{h^o},\theta,h_Q)$ be the associated system of Hodge bundles with the Hodge metric $h_Q$. Namely, $H^{p,q}_{h^o}:= R^qh^o_\ast\Omega^p_{Z'^o/X'^o}$ and
	$\theta:H^{p,q}_{h^o}\to H^{p-1,q+1}_{h^o}\otimes\Omega_{X'^o}$ is defined by taking wedge product with the Kodaira-Spencer class. 
	
	Let $\omega_{X'}\simeq\rho^{\ast}\omega_{\widetilde{X}}\otimes\sO_{X'}(E')$
	for some exceptional divisor $E'$ of $\rho$.
	By Theorem \ref{thm_parabolic} and \S \ref{section_prolongation_general},
	$$\left(\psi_{\ast}\left({_{<\psi^{-1}(D_f)_{\rm red}+E'}}H^n_{h^o}\right)=\bigoplus_{p=0}^n \psi_{\ast}\left({_{<\psi^{-1}(D_f)_{\rm red}+E'}}H^{p,n-p}_{h^o}\right),\theta\right)$$is a meromorphic Higgs sheaf on $X$ such that the Higgs field $\theta$ is holomorphic over $X\backslash\pi(D_g)$ and is regular along $\pi(D_g)$.
	
	The main result of this subsection is the following theorem, inspired by the constructions in \cite{Zuo2001}.
	\begin{thm}\label{thm_VZ_prolongation}
		Notations and assumptions as in \S \ref{section_setting} and \S \ref{section_VZ_sheaf}. Then the following hold.
		\begin{enumerate}
			\item There is a natural inclusion $\pi_\ast(\widetilde{L})\subset \psi_{\ast}\left({_{<\psi^{-1}(D_f)_{\rm red}+E'}}H^{n,0}_{h^o}\right)$.
			\item Let $(\bigoplus_{p=0}^n L^p,\theta)\subset \left(\psi_{\ast}\left({_{<\psi^{-1}(D_f)_{\rm red}+E'}}H^n_{h^o}\right),\theta\right)$ be the meromorphic Higgs subsheaf generated by $L^0:=\pi_\ast(\widetilde{L})$, where $$L^p\subset\psi_{\ast}\left({_{<\psi^{-1}(D_f)_{\rm red}+E'}}H^{n-p,p}_{h^o}\right).$$ Then the Higgs field $$\theta:L^p|_{X\backslash \pi(D_g)}\to L^{p+1}|_{X\backslash \pi(D_g)}\otimes \Omega_{X\backslash \pi(D_g)}$$ is holomorphic over $X\backslash D_f$ and has at most log poles along $D_f$ for each $0\leq p<n$, that is, 
			$$\theta(L^p)\subset L^{p+1}\otimes\Omega_X(\log D_f).$$
		\end{enumerate}
	\end{thm}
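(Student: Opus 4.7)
The strategy is Viehweg--Zuo's cyclic cover construction, recast in the analytic prolongation framework built in Sections 2--3. It splits into three stages.

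\textbf{Stage 1 (Geometric inclusion $\widetilde{L}\hookrightarrow g_\ast\omega_{Z/\widetilde{X}}$).} For the degree-$k$ cyclic cover $\varpi:\widetilde{Y}_k\to\widetilde{Y}$, one has the eigenspace decomposition $\varpi_\ast\omega_{\widetilde{Y}_k/\widetilde{Y}}\simeq\bigoplus_{i=0}^{k-1}\widetilde{B}^{\otimes i}$. Combining the $i=k-1$ summand with $\omega_{\widetilde{Y}/\widetilde{X}}\otimes\widetilde{B}^{\otimes(k-1)}=\omega_{\widetilde{Y}/\widetilde{X}}^{\otimes k}\otimes\widetilde{f}^\ast\widetilde{L}^{-(k-1)}$ and the projection formula produces a subsheaf $\widetilde{f}_\ast\omega_{\widetilde{Y}/\widetilde{X}}^{\otimes k}\otimes\widetilde{L}^{-(k-1)}\subset(\widetilde{f}\varpi)_\ast\omega_{\widetilde{Y}_k/\widetilde{X}}$, into which $\widetilde{L}$ embeds via $s_{\widetilde{L}}$. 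Composing with $\mu$ (using Grauert--Riemenschneider together with the fact that $\widetilde{Y}_k$ has at worst quotient singularities in codimension one) yields $\widetilde{L}\hookrightarrow g_\ast\omega_{Z/\widetilde{X}}$ on $\widetilde{X}$.

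\textbf{Stage 2 (Sharp prolongation bound, part (1)).} Apply Proposition \ref{prop_prolongation0_vs_geo} to the K\"ahler morphism $g:Z\to\widetilde{X}$ with desingularization $\rho:X'\to\widetilde{X}$ to get $g_\ast\omega_{Z/\widetilde{X}}\hookrightarrow\rho_\ast({_{<\rho^{-1}(D_g)_{\rm red}+E'}}H^{n,0}_{h^o})$. Composing with Stage 1 and pushing forward by $\pi$ yields the weaker inclusion $\pi_\ast\widetilde{L}\subset\psi_\ast({_{<\rho^{-1}(D_g)_{\rm red}+E'}}H^{n,0}_{h^o})$. To sharpen the pole bound to $\psi^{-1}(D_f)_{\rm red}+E'$ I would apply Lemma \ref{lem_Deligne_prolongation}: locally near a point of $D_g\setminus\pi^{-1}(D_f)$, the image of $\widetilde{L}$ in $g_\ast\omega_{Z/\widetilde{X}}$ is (up to local frames) the relative $n$-form $t\cdot\widetilde{f}^\ast\eta$, where $t^k=\varpi^\ast\widetilde{s}$ and $\eta$ generates $\widetilde{f}_\ast\omega_{\widetilde{Y}/\widetilde{X}}^{\otimes k}\otimes\widetilde{L}^{-(k-1)}$. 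Its fiberwise Hodge $L^2$-norm is proportional to $\int_{\widetilde{Y}_{\widetilde{x}}}|\widetilde{s}|^{2/k}|\eta|^2$, which is locally bounded because $\widetilde{f}$ is a K\"ahler submersion there and $\widetilde{s},\eta$ are holomorphic. Lemma \ref{lem_Deligne_prolongation} then gives extension with no log pole across $\rho^{-1}(D_g)\setminus\psi^{-1}(D_f)$, proving (1).

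\textbf{Stage 3 (Higgs subsheaves, part (2)).} Define $L^p$ inductively as the Higgs subsheaf of $\psi_\ast({_{<\psi^{-1}(D_f)_{\rm red}+E'}}H^\bullet_{h^o})$ generated by $L^0=\pi_\ast\widetilde{L}$. Theorem \ref{thm_parabolic} guarantees at most log poles for the ambient Higgs field along $\rho^{-1}(D_g)_{\rm red}+E'$, which a priori would allow extra log poles along $\pi(D_g)\setminus D_f$ after pushforward. To exclude these, I would follow Viehweg--Zuo's geometric identification: construct on $\widetilde{X}$ a geometric Higgs bundle with $p$-th piece $\widetilde{f}_\ast(\Omega^{n-p}_{\widetilde{Y}/\widetilde{X}}(\log\widetilde{D}_f)\otimes\widetilde{B}^{\otimes(k-1)})$ (where $\widetilde{D}_f=\widetilde{f}^{-1}(\pi^{-1}(D_f))$), whose Higgs field is the log Kodaira--Spencer of $\widetilde{f}$. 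Since $\widetilde{f}$ is a K\"ahler submersion over $\widetilde{X}\setminus\pi^{-1}(D_f)$, this geometric Higgs field has log poles only along $\pi^{-1}(D_f)$. An $L^2$-comparison on fibers (as in Stage 2) then identifies these geometric pieces with the $L^p$ inside the VHS prolongation, so pushing forward by $\pi$ gives $\theta(L^p)\subset L^{p+1}\otimes\Omega_X(\log D_f)$, completing (2).

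\textbf{Main obstacle.} The crux is the sharpening of the logarithmic-pole bound from the full VHS degeneration locus $\rho^{-1}(D_g)+E'$ down to $\psi^{-1}(D_f)+E'$, both for the line bundle (Stage 2) and for the Higgs field (Stage 3). Both reductions exploit that $\widetilde{f}$ is smooth away from $\pi^{-1}(D_f)$; Stage 2 uses an explicit $L^2$-estimate for the cyclic-cover generator, while Stage 3 relies on the geometric reconstruction of the Higgs bundle from the log Kodaira--Spencer data of $\widetilde{f}$, where the sharp pole bound is manifest from the smoothness of $\widetilde{f}$.
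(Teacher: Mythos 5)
Your overall plan --- cyclic cover, then realize the Viehweg--Zuo system inside the prolonged Hodge bundle of the desingularized cover family, and then sharpen the pole bound from the full VHS degeneration locus $D_g$ down to $D_f$ --- is the right one, and you have correctly identified the crux: both the $(n,0)$-inclusion in (1) and the pole control in (2) require passing from $\rho^{-1}(D_g)$ to $\psi^{-1}(D_f)$. Stage~1 agrees with the paper's $\widetilde\alpha$ map. Stage~2 is a plausible variant: the paper does not literally compute the fiberwise $L^2$-norm $\int|\widetilde s|^{2/k}|\eta|^2$; it instead composes Proposition~\ref{prop_prolongation0_vs_geo} with a comparison map $\iota$ built from the distinguished triangle of relative differentials of the \emph{parent} family $f^o$ over $X^o$, and controls the extra-pole contribution along $D_g\setminus\pi^{-1}(D_f)$ by a semistable reduction of $h$ over $\psi^{-1}(X^o)$ combined with Steenbrink's bound $[0,1)$ on the residue eigenvalues of the log Gauss--Manin connection (Lemma~\ref{lem_VZ_Xo}). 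Your $L^2$-estimate is a reasonable alternative route for the $(n,0)$ piece, but you would still need to justify transferring the integral from $Z$-fibers to $\widetilde Y$-fibers through the cyclic cover, and the argument as written only treats the single piece $H^{n,0}$.

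The real gap is in Stage~3. You introduce a ``geometric Higgs bundle'' $\bigoplus_q R^q\widetilde f_\ast(\Omega^{n-q}_{\widetilde Y/\widetilde X}(\log\widetilde D_f)\otimes\widetilde B^{\otimes(k-1)})$ and assert its Higgs field has log poles only along $\pi^{-1}(D_f)$ because $\widetilde f$ is a submersion off $\pi^{-1}(D_f)$. That assertion is not automatic: for the boundary map of the relative log de~Rham complex to have only log poles along $\pi^{-1}(D_f)$ (and none along the branch locus of the cyclic cover) you need the map of pairs $(\widetilde Y,\widetilde D_f)\to(\widetilde X,\pi^{-1}(D_f))$ to be log smooth, which is false in general and requires a semistable reduction step --- precisely the step encapsulated by Lemma~\ref{lem_VZ_Xo} in the paper, and absent from your proposal. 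Further, the ``$L^2$-comparison on fibers'' that is supposed to identify these geometric pieces with the subsheaves $L^p$ inside $\psi_\ast({_{<\psi^{-1}(D_f)_{\rm red}+E'}}H^{n-p,p}_{h^o})$ is not constructed: what the paper provides is an explicit morphism $\iota$ compatible with both Higgs fields (the commutative diagrams (\ref{align_theta_vartheta_commute}) and (\ref{align_iota})), not merely a norm comparison, and it is exactly this compatibility that allows the pole bound from $G^{p,q}:={\rm Im}(\iota)\cap\psi_\ast({_{<\psi^{-1}(D_f)_{\rm red}+E'}}H^{p,q}_{h^o})$ to propagate through iterated applications of $\theta$. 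Without a semistable reduction, a Steenbrink-type eigenvalue bound, and an actual Higgs-equivariant comparison map, Stage~3 does not close.
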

	The proof will occupy the remainder of this subsection. It will be accomplished by constructing a log Higgs subsheaf $\bigoplus_{p+q=n}G^{p,q}$ of $\psi_{\ast}\left({_{<\psi^{-1}(D_f)_{\rm red}+E'}}H^n_{h^o}\right)$ which contains $\pi_\ast(\widetilde{L})$ such that the Higgs field is holomorphic on $X\backslash D_f$. We first construct the Higgs subsheaf on $\psi(X'^o)$ and then extend it to the whole manifold $X$ by using analytic prolongations.
	\subsubsection{The construction on $\psi(X'^o)$}
	Let $X_1:=\psi(X'^o)\subset X^o$ and  $Y_1:=f^{-1}(X_1)\subset Y^o$. Then $f_1:=f|_{Y_1}:Y_1\to X_1$ is a proper K\"ahler submersion. Let $\varphi^o:=\varphi|_{Z'^o}:Z'^o\to Y_1$.
	Since $\widetilde{Y}_k$ is embedded into the total space of the line bundle $\widetilde{B}$, the pullback $(\varpi\mu)^\ast \widetilde{B}$ has a tautological section. This gives an injective morphism
	\begin{align*}
		(\varpi\mu\sigma')^\ast (\widetilde{B}^{-1})\to\sO_{Z'}.
	\end{align*}
	Combining it with (\ref{align_iso_Lo_L}), one gets an injective map 
	$$\varphi^{o\ast}(B^o|_{Y_1})^{-1}\simeq(\varpi\mu\sigma')^\ast (\widetilde{B}^{-1})|_{Z'^o}\to\sO_{Z'^o}.$$
	By composing it with the natural map $\varphi^{o\ast}\Omega^p_{Y_1/X_1}\to\Omega^p_{Z'^o/X'^o}$, we obtain a natural morphism
	\begin{align}\label{align_VZ1}
		\varphi^{o\ast}((B^o)^{-1}\otimes\Omega^p_{Y^o/X^o}|_{Y_1})\to \Omega^p_{Z'^o/X'^o}
	\end{align}
	for every $p=0,\dots,n$. 
	Hence (\ref{align_VZ1}) induces a map
	\begin{align}\label{align_VZmap}
		\iota_{X_1}:R^qf^o_{\ast}((B^o)^{-1}\otimes\Omega^{p}_{Y^o/X^o})|_{X_1}\to \psi^o_\ast R^qh^o_\ast(\Omega^{p}_{Z'^o/X'^o})
	\end{align} 
	for every $p,q\geq0$, where $\psi^o:=\psi|_{X'^o}:X'^o\to X_1$ is an isomorphism.   
	Consider the diagram
	\begin{align*}
		\xymatrix{
			0\ar[r]&	h^{o\ast}\Omega_{X'^o}\otimes\Omega^{p-1}_{Z'^o/X'^o}\ar[r]&\Omega^{p}_{Z'^o} \ar[r] &\Omega^{p}_{Z'^o/X'^o}\ar[r]&0
		}.
	\end{align*}
	By taking the higher direct image $R^\ast h^o_\ast$, we obtain the Higgs field as the coboundary map
	\begin{align}\label{align_theta_log}
		\theta:R^qh^o_\ast(\Omega^{p}_{Z'^o/X'^o})\to R^{q+1}h^o_\ast(\Omega^{p-1}_{Z'^o/X'^o})\otimes\Omega_{X'^o}.
	\end{align}
	Consider the diagram
	\begin{align*}
		\xymatrix{
			0\ar[r]&	f^{o\ast}\Omega_{X^o}\otimes\Omega^{p-1}_{Y^o/X^o}\ar[r]&\Omega^{p}_{Y^o} \ar[r] &\Omega^{p}_{Y^o/X^o}\ar[r]&0
		}.
	\end{align*}
	By tensoring it with $(B^o)^{-1}$ and taking the higher direct image $R^\ast f^o_{\ast}$, one has the coboundary map
	\begin{align}\label{align_vartheta_nolog}
		\vartheta:R^qf^o_{\ast}((B^o)^{-1}\otimes\Omega^{p}_{Y^o/X^o})\to R^{q+1}f^o_{\ast}((B^o)^{-1}\otimes\Omega^{p-1}_{Y^o/X^o})\otimes\Omega_{X^o}.
	\end{align}
	It follows from (\ref{align_VZ1}) that there is a morphism between distinguished triangles in the derived category $D(Y_1)$
	\begin{align*}
		\xymatrix{
			f^{o\ast}\Omega_{X^o}\otimes\Omega^{p-1}_{Y^o/X^o}\otimes (B^o)^{-1}|_{Y_1}\ar[r]\ar[d]&\Omega^{p}_{Y^o}\otimes (B^o)^{-1}|_{Y_1} \ar[r]\ar[d] &\Omega^{p}_{Y^o/X^o}\otimes (B^o)^{-1}|_{Y_1}\ar[r]\ar[d]&\\
			R\varphi^o_\ast\left(h^{o\ast}\Omega_{X'^o}\otimes\Omega^{p-1}_{Z'^o/X'^o}\right)\ar[r]&R\varphi^o_\ast\left(\Omega^{p}_{Z'^o}\right) \ar[r] &R\varphi^o_\ast\left(\Omega^{p}_{Z'^o/X'^o}\right)\ar[r]&
		}.
	\end{align*}
	Then there is a commutative diagram
	\begin{align}\label{align_theta_vartheta_commute}
		\xymatrix{
			R^qf^o_{\ast}((B^o)^{-1}\otimes\Omega^{p}_{Y^o/X^o})|_{X_1}\ar[r]^-{\vartheta|_{X_1}}\ar[d]^{\iota_{X_1}}& R^{q+1}f^o_{\ast}((B^o)^{-1}\otimes\Omega^{p-1}_{Y^o/X^o})|_{X_1}\otimes\Omega_{X_1}\ar[d]^{\iota_{X_1}\otimes{\rm Id}}\\
			\psi^o_\ast R^qh^o_\ast(\Omega^{p}_{Z'^o/X'^o})\ar[r]^-{\theta}& \psi^o_\ast R^{q+1}h^o_\ast(\Omega^{p-1}_{Z'^o/X'^o})\otimes\Omega_{X_1}
		}.
	\end{align}
	\subsubsection{Extend the Higgs sheaves to $X^o$}
	Notice that $H^{p,q}_{h^o}\simeq R^qh^o_\ast(\Omega^p_{Z'^o/X'^o})$. The main result of this part is the following lemma.
	\begin{lem}\label{lem_VZ_Xo}
		The map (\ref{align_VZmap}) extends to a map
		\begin{align}\label{align_VZmap2}
			\iota_{X^o}:R^qf^o_{\ast}((B^o)^{-1}\otimes\Omega^{p}_{Y^o/X^o})\to \psi_\ast\left( _{<\rho^{-1}(D_g)_{\rm red}}H^{p,q}_{h^o}\right)|_{X^o}.
		\end{align}
	\end{lem}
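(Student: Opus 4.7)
The strategy is to extend $\iota_{X_1}$ by local construction and gluing. On $X^o$, the morphism $f^o$ is a proper K\"ahler submersion, so $R^qf^o_\ast((B^o)^{-1}\otimes\Omega^p_{Y^o/X^o})$ is locally free of finite rank on $X^o$ by a K\"ahler Grauert-type theorem. The target $\psi_\ast\bigl(\,_{<\psi^{-1}(D_f)_{\rm red}}H^{p,q}_{h^o}\bigr)|_{X^o}$ is coherent and torsion-free, and on $\psi^{-1}(X^o)$ the only boundary component of $h^o$ present is $\rho^{-1}(D_g')$ where $D_g':=D_g\setminus\pi^{-1}(D_f)$. The problem thus reduces to the following: for each $x\in X^o\setminus X_1$, a small neighborhood $U\ni x$, and an arbitrary local section $\alpha$ of the LHS on $U$, construct an extension of $\iota_{X_1}(\alpha|_{U\cap X_1})$ across $U$ valued in the target.

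I would construct the extension by a \v{C}ech representative: realize $\alpha$ by a cocycle of local sections of $(B^o)^{-1}\otimes\Omega^p_{Y^o/X^o}$ on an open cover of $f^{-1}(U)$, pull back by $\varphi=\sigma\varpi\mu\sigma'$, and apply the sheaf map (\ref{align_VZ1}) to obtain a cocycle on $h^{-1}(\psi^{-1}(U))\cap Z'^o$ valued in $\Omega^p_{Z'^o/X'^o}$. This defines a class in $H^{p,q}_{h^o}$ over $\psi^{-1}(U)\cap X'^o$ whose $\psi_\ast$-image coincides with $\iota_{X_1}(\alpha|_{U\cap X_1})$ on $U\cap X_1$. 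What remains is to show that this class extends across $\rho^{-1}(D_g')\cap\psi^{-1}(U)$ with Hodge norm growth permitted by $_{<\psi^{-1}(D_f)_{\rm red}}H^{p,q}_{h^o}$.

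The key analytic input is a norm estimate near $\rho^{-1}(D_g')$. At a general point of a component of $D_g'$, the morphism $g$ arises locally from a $k$-fold cyclic covering $\varpi$ branched along $\{\widetilde s=0\}$, possibly composed with a resolution $\mu$ of cyclic quotient singularities. Locally the comparison $\varphi^\ast\Omega^p_{Y/X}\to\Omega^p_{Z'/X'}$ picks up at most a ramification factor of order $k$, so the pullback of a bounded section of $(B^o)^{-1}\otimes\Omega^p_{Y^o/X^o}$ acquires Hodge norm growth of the form $|z|^{-\gamma}$ with $\gamma<1$. For the top piece $(p,q)=(n,0)$ the growth bound follows from Lemma \ref{lem_Deligne_prolongation} applied to a local cyclic-cover model in the spirit of Proposition \ref{prop_prolongation0_vs_geo}; for lower $(p,q)$ one iterates the top-piece inclusion using Griffiths transversality and the log-regularity of the Higgs field on the prolongation (Theorem \ref{thm_parabolic}), combined with Mochizuki's norm estimates (Theorem \ref{thm_Hodge_metric_asymptotic}) to control the iterates. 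The local extensions glue because the target is torsion-free and they agree on the dense open $U\cap X_1$, producing the global $\iota_{X^o}$.

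I expect the main obstacle to be the estimate along those components of $D_g'$ that are introduced by the resolution $\mu$ of the singularities of $\widetilde Y_k$ rather than by the cyclic covering $\varpi$ itself; these demand a more delicate analysis of the local monodromy and of the tameness of the harmonic bundle $(H^n_{h^o},\theta,h_Q)$, since the transparent cyclic-cover local model is no longer available there.
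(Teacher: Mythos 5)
Your approach is genuinely different from the paper's and, as you partly anticipate, it has real gaps.

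The paper's proof does not use any local norm estimates at all. It works as follows: choose a functorial desingularization $\beta:Z''\to\varphi^{-1}(Y^o)$ of the pair $\bigl(\varphi^{-1}(Y^o),h^{-1}(\rho^{-1}(D_g))\bigr)$, observe that $h\beta$ is semistable over the complement $X'_2$ of a codimension-two subset $S\subset\psi^{-1}(X^o)$, and apply Steenbrink's theorem on the logarithmic Gauss--Manin connection of a semistable family: the residues of $\nabla_{\rm GM}$ along the boundary have eigenvalue real parts in $[0,1)$, which forces $R^q(h\beta)_\ast\Omega^p_{Z''_2/X'_2}(\log D_{Z''_2})$ to inject into the prolongation ${}_{<\bm 0}H^{p,q}$ for \emph{every} $(p,q)$ simultaneously. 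Composing with the natural comparison maps from $\psi^\ast R^qf^o_\ast((B^o)^{-1}\otimes\Omega^p_{Y^o/X^o})$ gives the inclusion over $X'_2$, and Hartogs (using local freeness of the prolongation, Theorem \ref{thm_parabolic}, and ${\rm codim}\,S\geq 2$) extends it across $S$. No Hodge-metric asymptotics are needed.

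Your argument diverges at the point where you need to control the prolongation growth for all $(p,q)$. Your cyclic-cover heuristic and Lemma \ref{lem_Deligne_prolongation} genuinely only reach the top piece $(p,q)=(n,0)$ --- that is the content of Proposition \ref{prop_prolongation0_vs_geo}, which is about $f_\ast\omega_{Y/X}$ only. The step ``for lower $(p,q)$ one iterates the top-piece inclusion using Griffiths transversality'' does not work: the sections of $R^qf^o_\ast((B^o)^{-1}\otimes\Omega^p_{Y^o/X^o})$ for $(p,q)\ne(n,0)$ are not generated from $(n,0)$ by applying the Higgs field, so transversality gives you nothing to iterate from. You correctly flag the second gap yourself --- along the boundary components produced by the resolution $\mu$ there is no clean local cyclic-cover model and your ramification estimate $|z|^{-\gamma}$, $\gamma<1$, is not justified. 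Both gaps are precisely what the semistable reduction plus Steenbrink argument is designed to avoid: semistable reduction replaces the uncontrolled local geometry (cyclic cover branching \emph{and} the resolution $\mu$) by a normal-crossings model, and Steenbrink gives the prolongation bound uniformly in $(p,q)$ by an algebraic residue computation rather than a metric estimate. If you want to salvage your route you would need a replacement for Steenbrink in the metric language, i.e.\ a proof that the logarithmic-pole pushforwards $R^q(h\beta)_\ast\Omega^p(\log)$ are $L^2$-bounded sections; at that point you have essentially reproved Steenbrink, so the paper's route is shorter.
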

	\begin{proof}
		Consider the diagram
		\begin{align}\label{align_ssr_lem3_3}
			\xymatrix{
				Z''\ar[r]^-{\beta}&\varphi^{-1}(Y^o)\ar[r]^-\varphi \ar[d]^{h'} & Y^o\ar[d]^{f^o}\\
				&\psi^{-1}(X^o)\ar[r]^-\psi & X^o
			}.
		\end{align}
		where $h':=h|_{\varphi^{-1}(Y^o)}$ is a proper K\"ahler submersion over $X'^o=\psi^{-1}(X^o)\backslash\rho^{-1}(D_g)$ and
		$\beta:Z''\to \varphi^{-1}(Y^o)$ is a functorial desingularization of the pair $(\varphi^{-1}(Y^o),h'^{-1}(\psi^{-1}(X^o)\backslash X'^o))$. Notice that there is a closed analytic subset $S\subset\psi^{-1}(X^o)\backslash X'^o$ so that ${\rm codim}_{\psi^{-1}(X^o)}(S)\geq 2$ and $h'\beta:Z''\to \psi^{-1}(X^o)$ is semistable (\S \ref{section_ssred}) over $\psi^{-1}(X^o)\backslash S$.
		(\ref{align_ssr_lem3_3}) induces the natural morphisms
		\begin{align}\label{align_lem33_1}
			\psi^\ast\left(R^qf^o_\ast((B^o)^{-1}\otimes\Omega^p_{Y^o/X^o})\right)\to R^qh'_\ast(\Omega^p_{\varphi^{-1}(Y^o)/\psi^{-1}(X^o)})\to R^q(h'\beta)_\ast\left(\Omega^p_{Z''/\psi^{-1}(X^o)}\right)
		\end{align}
		for every $p,q\geq0$. Let
		$X'_2:=\psi^{-1}(X^o)\backslash S$, $D_{X'_2}:=\rho^{-1}(D_g)\cap X'_2$, $Z''_2:=(h'\beta)^{-1}(X'_2)$ and $D_{Z''_2}:=(h'\beta)^{-1}(D_{X_2})_{\rm red}$. Then $h'\beta|_{Z''_2}:(Z''_2,D_{Z''_2})\to(X'_2,D_{X'_2})$ is a proper K\"ahler semistable morphism (\S \ref{section_ssred}).
		Consider the associated logarithmic Gauss-Manin connection 
		\begin{align*}
			\nabla_{\rm GM}:R^m(h'\beta|_{Z''_2})_\ast\left(\Omega^\bullet_{Z''_2/X'_2}(\log D_{Z''_2})\right)\to R^m(h'\beta|_{Z''_2})_\ast\left(\Omega^\bullet_{Z''_2/X'_2}(\log D_{Z''_2})\right)\otimes\Omega_{X'_2}(\log D_{X'_2})
		\end{align*}
	    where $0\leq m\leq 2n$.
		According to \cite[Proposition 2.2]{Steenbrink1975}, the real parts of the eigenvalues of the residue map of $\nabla_{\rm GM}$ along each component of $D_{X'_2}$ lie in $[0,1)$. As a consequence, the corresponding logarithmic Higgs bundle lies in the prolongation $_{<D_{X'_2}}H^m$,
		where
		$$H^m:=\bigoplus_{p+q=m}R^q(h'\beta|_{Z''_2\backslash D_{Z''_2}})_\ast\left(\Omega^p_{(Z''_2\backslash D_{Z''_2})/X'^o}\right)$$
		is the Higgs bundle associated with the proper K\"ahler submersion $Z''_2\backslash D_{Z''_2}\to X'^o$. Namely there is a natural inclusion
		\begin{align*}
			R^q(h'\beta|_{Z''_2})_\ast\left(\Omega^p_{Z''_2/X'_2}(\log D_{Z''_2})\right)\to _{<D_{X'_2}}R^q(h'\beta|_{Z''_2\backslash D_{Z''_2}})_\ast\left(\Omega^p_{(Z''_2\backslash D_{Z''_2})/X'^o}\right), \quad\forall p,q\geq0.
		\end{align*}
		Since $\beta$ is an isomorphism over the open submanifold $h'^{-1}(X'^o)$, the family $Z''_2\backslash D_{Z''_2}\to X'^o$ is isomorphic to the family $h^o:Z'^o\to X'^o$. Consequently, one obtains a natural inclusion
		\begin{align*}
			R^q(h'\beta|_{Z''_2})_\ast\left(\Omega^p_{Z''_2/X'_2}(\log D_{Z''_2})\right)\to _{<\rho^{-1}(D_g)_{\rm red}}H^{p,q}_{h^o}|_{X'_2}, \quad\forall p,q\geq0.
		\end{align*}
		Taking (\ref{align_lem33_1}) into account, one gets a map
		\begin{align}\label{align_pq_to_prolongation}
			\psi^\ast\left(R^qf^o_\ast((B^o)^{-1}\otimes\Omega^p_{Y^o/X^o})\right)\big|_{X'_2}\to _{<\rho^{-1}(D_g)_{\rm red}}H^{p,q}_{h^o}|_{X'_2}, \quad\forall p,q\geq0.
		\end{align}
		Since $_{<\rho^{-1}(D_g)_{\rm red}}H^{p,q}_{h^o}$ is locally free (Theorem \ref{thm_parabolic}) and ${\rm codim}_{\psi^{-1}(X^o)}(S)\geq 2$, the morphism (\ref{align_pq_to_prolongation}) extends to a morphism
		\begin{align*}
			\psi^\ast\left(R^qf^o_\ast((B^o)^{-1}\otimes\Omega^p_{Y^o/X^o})\right)\to _{<\rho^{-1}(D_g)_{\rm red}}H^{p,q}_{h^o}|_{\psi^{-1}(X^o)}, \quad p,q\geq0.
		\end{align*}
		by Hartogs extension theorem. Taking the adjoint we obtain (\ref{align_VZmap2}).
	\end{proof}
	\subsubsection{Extend the Higgs sheaves to $X$}
	In this part we extend (\ref{align_VZmap2}) to $X$. Let
	$$R^qf^o_{\ast}((B^o)^{-1}\otimes\Omega^{p}_{Y^o/X^o})\langle D_f\rangle:=\bigcup_{n\in\bZ}R^qf_{\ast}((\omega_{Y/X}\otimes f^{\ast}(L^{\vee\vee})^{-1})^{-1}\otimes\Omega^{p}_{Y/X})(nD_f)$$
	be the sheaf of sections of $j_{X^o\ast}R^qf^o_{\ast}((B^o)^{-1}\otimes\Omega^{p}_{Y^o/X^o})$ that are meromorphic along $D_f$, where $j_{X^o}:X^o\to X$ is the immersion. Let
	$$R^qh^o_\ast(\Omega^{p}_{Z'^o/X'^o})\langle\rho^{-1}(D_g)\rangle:=\bigcup_{n\in\bZ}R^qh_\ast(\Omega^{p}_{Z'/X'})(n\rho^{-1}(D_g))$$
	ne the sheaf of sections of $j_{X'^o\ast}R^qh^o_\ast(\Omega^{p}_{Z'^o/X'^o})$ that are meromorphic along $\rho^{-1}(D_g)$, where $j_{X'^o}:X'^o\to X'$ is the immersion. (\ref{align_theta_vartheta_commute}) naturally extends to the diagram
	\begin{align}\label{align_iota}
		\xymatrix{
			R^qf^o_{\ast}((B^o)^{-1}\otimes\Omega^{p}_{Y^o/X^o})\langle D_f\rangle\ar[r]^-{\vartheta}\ar[d]^{\iota}& R^{q+1}f^o_{\ast}((B^o)^{-1}\otimes\Omega^{p-1}_{Y^o/X^o})\langle D_f\rangle\otimes\Omega_{X}\langle D_f\rangle\ar[d]^{\iota\otimes{\rm inclusion}}\\
			\psi_\ast \big(R^qh^o_\ast(\Omega^{p}_{Z'^o/X'^o})\langle\rho^{-1}(D_g)\rangle\big)\ar[r]^-{\theta}& \psi_\ast \big(R^{q+1}h^o_\ast(\Omega^{p-1}_{Z'^o/X'^o})\langle\rho^{-1}(D_g)\rangle\otimes\Omega_{X'}\langle\rho^{-1}(D_g)\rangle\big)
		}.
	\end{align}
	Define 
	\begin{align*}
		G^{p,q}:={\rm Im}(\iota)\cap \psi_{\ast}\left({_{<\psi^{-1}(D_f)_{\rm red}+E'}}H^{p,q}_{h^o}\right).
	\end{align*}
	Notice that the sections of $G^{p,q}$ have bounded degrees of poles along $X\backslash X_1$ since they lie in $\psi_{\ast}\left({_{<\psi^{-1}(D_f)_{\rm red}+E'}}H^{p,q}_{h^o}\right)$. Hence $G^{p,q}$ equals the intersection of $ \psi_{\ast}\left({_{<\psi^{-1}(D_f)_{\rm red}+E'}}H^{p,q}_{h^o}\right)$ with
	$${\rm Im}\left(R^qf_{\ast}((\omega_{Y/X}\otimes f^{\ast}(L^{\vee\vee})^{-1})^{-1}\otimes\Omega^{p}_{Y/X})(n_1D_f)\to \psi_\ast\big(R^qh_\ast(\Omega^{p}_{Z'/X'})(n_2\rho^{-1}(D_g))\big)\right)$$
	for some $n_1, n_2\in\bZ$. In particular, $G^{p,q}$ is a coherent sheaf on $X$ for every $p,q\geq0$.
	\begin{lem}\label{lem_thetaG}
		\begin{align}\label{align_ZV_is_log}
			\theta(G^{p,q})\subset G^{p-1,q+1}\otimes\Omega_X(\log D_f).
		\end{align}
	\end{lem}
	\begin{proof}
		\emph{Case I.} Let $x\in X^o=X\backslash D_f$ and let $z_1,\dots,z_d$ be holomorphic local coordinates at $x$. It suffices to show that
		\begin{align*}
			\theta(\frac{\partial}{\partial z_i})(G^{p,q})\subset G^{p-1,q+1},\quad \forall i=1,\dots,d.
		\end{align*}
		Let $v\in R^qf^o_{\ast}((B^o)^{-1}\otimes\Omega^{p}_{Y^o/X^o})$ such that $\iota(v)\in \psi_{\ast}\left({_{<\psi^{-1}(D_f)_{\rm red}+E'}}H^{p,q}_{h^o}\right)$. One has $$\vartheta(\frac{\partial}{\partial z_i})(v)\in R^{q+1}f^o_{\ast}((B^o)^{-1}\otimes\Omega^{p-1}_{Y^o/X^o})$$ according to (\ref{align_vartheta_nolog}). Thus
		\begin{align*}
			\theta(\frac{\partial}{\partial z_i})(\iota(v))=\iota\left(\vartheta(\frac{\partial}{\partial z_i})(v)\right)\in{\rm Im}(\iota),\quad \forall i=1,\dots,d
		\end{align*}
		by (\ref{align_iota}).
		Lemma \ref{lem_VZ_Xo} yields that $${\rm Im}(\iota)|_{X^o}\subset \psi_{\ast}\left({_{<\psi^{-1}(D_f)_{\rm red}+E'}}H^{p,q}_{h^o}\right)|_{X^o}.$$ This shows (\ref{align_ZV_is_log}) on $X\backslash D_f$.
		
		\emph{Case II.} Let $x\in D_f$. Let $z_1,\dots,z_d$ be holomorphic local coordinates at $x$ so that $D_f=\{z_1\cdots z_l=0\}$. Let
		\begin{align*}
			\xi_i:=\begin{cases}
				z_i\frac{\partial}{\partial z_i}, & i=1,\dots,l\\
				\frac{\partial}{\partial z_i}, & i=l+1,\dots,d
			\end{cases}.
		\end{align*}
		It suffices to show that 
		\begin{align*}
			\theta(\xi_i)(G^{p,q})\subset G^{p-1,q+1},\quad \forall i=1,\dots,d.
		\end{align*}
		Let $v\in R^qf^o_{\ast}((B^o)^{-1}\otimes\Omega^{p}_{Y^o/X^o})\langle D_f\rangle$ such that $\iota(v)\in \psi_{\ast}\left({_{<\psi^{-1}(D_f)_{\rm red}+E'}}H^{p,q}_{h^o}\right)$. It follows from (\ref{align_iota}) that 
		\begin{align*}
			\theta(\xi_i)(\iota(v))=\iota\left(\vartheta(\xi_i)(v)\right)\in{\rm Im}(\iota),\quad \forall i=1,\dots,d.
		\end{align*}
		Theorem \ref{thm_parabolic} yields that
		\begin{align}
			\theta(\xi_i)(\iota(v))\in \psi_{\ast}\left({_{<\psi^{-1}(D_f)_{\rm red}+E'}}H^{p,q}_{h^o}\right),\quad \forall i=1,\dots,d.
		\end{align}
		This shows (\ref{align_ZV_is_log}) on $X$.
	\end{proof}
	\subsubsection{Final proof}	
	It suffices to show the following lemma to finish the proof of Theorem \ref{thm_VZ_prolongation}, in accordance with Lemma \ref{lem_thetaG}.
	\begin{lem}
		There is a natural inclusion $\pi_\ast(\widetilde{L})\subset G^{n,0}$.
	\end{lem}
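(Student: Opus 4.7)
The plan is to prove $\pi_\ast(\widetilde L)\subset G^{n,0}$ by constructing two inclusions of $\pi_\ast(\widetilde L)$ into the ambient sheaf $\psi_\ast\bigl(R^0h^o_\ast(\omega_{Z'^o/X'^o})\langle\rho^{-1}(D_g)\rangle\bigr)$, one through $\iota$ and one through the cyclic covering and Proposition~\ref{prop_prolongation0_vs_geo}. These agree on $X^o$, and combining them refines the ambient target down to $G^{n,0}={\rm Im}(\iota)\cap\psi_\ast({_{<\psi^{-1}(D_f)_{\rm red}+E'}}H^{n,0}_{h^o})$.

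For the first inclusion, the natural map $\sO_{\widetilde X}(-\widetilde E)\hookrightarrow\sO_{\widetilde X}$ gives $\widetilde L\hookrightarrow\pi^\ast L^{\vee\vee}$ and hence $\pi_\ast(\widetilde L)\hookrightarrow L^{\vee\vee}$. The identification $(B^o)^{-1}\otimes\Omega^n_{Y^o/X^o}\simeq f^{o\ast}L^o$ combined with the adjunction unit produces $L^o\hookrightarrow R^0f^o_\ast((B^o)^{-1}\otimes\Omega^n_{Y^o/X^o})$, extending meromorphically across $D_f$ to $L^{\vee\vee}\subset R^0f^o_\ast((B^o)^{-1}\otimes\Omega^n_{Y^o/X^o})\langle D_f\rangle$. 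Composing with $\iota$ from (\ref{align_iota}) yields $\pi_\ast(\widetilde L)\hookrightarrow{\rm Im}(\iota)$.

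For the second inclusion, the tautological section of $\varpi^\ast\widetilde B$ on $\widetilde Y_k$ produces $\varpi^\ast\widetilde B^{-1}\hookrightarrow\sO_{\widetilde Y_k}$, and pulling back via $\mu$ gives $(\varpi\mu)^\ast\widetilde B^{-1}\hookrightarrow\sO_Z$. Since $\varpi\mu:Z\to\widetilde Y$ is a proper, generically finite morphism between smooth manifolds, $\omega_{Z/\widetilde Y}$ is the line bundle of an effective ramification divisor, so $\sO_Z\hookrightarrow\omega_{Z/\widetilde Y}$. Tensoring with $(\varpi\mu)^\ast\omega_{\widetilde Y/\widetilde X}$ and using $\tilde f^\ast\widetilde L=\omega_{\widetilde Y/\widetilde X}\otimes\widetilde B^{-1}$ together with $\omega_{Z/\widetilde X}=\omega_{Z/\widetilde Y}\otimes(\varpi\mu)^\ast\omega_{\widetilde Y/\widetilde X}$ yields $g^\ast\widetilde L\hookrightarrow\omega_{Z/\widetilde X}$. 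Pushing forward by $g$ and using $\sO_{\widetilde X}\hookrightarrow g_\ast\sO_Z$ produces $\widetilde L\hookrightarrow g_\ast\omega_{Z/\widetilde X}$, and Proposition~\ref{prop_prolongation0_vs_geo} applied to $g$ (with the chosen $\rho,Z'$) embeds $g_\ast\omega_{Z/\widetilde X}$ into $\rho_\ast({_{<\rho^{-1}(D_g)_{\rm red}+E'}}H^{n,0}_{h^o})$. Pushing by $\pi$ then gives $\pi_\ast(\widetilde L)\subset\psi_\ast({_{<\rho^{-1}(D_g)_{\rm red}+E'}}H^{n,0}_{h^o})$.

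Both constructions arise from pulling back the section of $\widetilde L$ to $Z'$ and interpreting it as a relative $n$-form via the cyclic-cover inclusion into $\omega_{Z'^o/X'^o}$ (using the biholomorphism $Z'^o\simeq Z^o\times_{\widetilde X^o}X'^o$), so they coincide on $X^o$. By Lemma~\ref{lem_VZ_Xo}, the $\iota$-image restricted to $X^o$ lies in $\psi_\ast({_{<\psi^{-1}(D_f)_{\rm red}}}H^{n,0}_{h^o})|_{X^o}\subset\psi_\ast({_{<\psi^{-1}(D_f)_{\rm red}+E'}}H^{n,0}_{h^o})|_{X^o}$. Because $D_g\setminus\pi^{-1}(D_f)\subset\pi^{-1}(X^o)$, the extra components of $\rho^{-1}(D_g)_{\rm red}-\psi^{-1}(D_f)_{\rm red}$ lie entirely in $\psi^{-1}(X^o)$, and the prolongation growth along an irreducible component is determined by its behavior at a generic point (in $\psi^{-1}(X^o)$); hence the refined growth propagates across $\psi^{-1}(D_f)$, yielding $\pi_\ast(\widetilde L)\subset G^{n,0}$. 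The main obstacle is this last refinement: proving that the growth condition verified on $X^o$ controls the extension across $\psi^{-1}(D_f)$, which depends on the local $L^2$-analytic characterisation of the prolongation from Lemma~\ref{lem_Deligne_prolongation} and a careful identification of the two constructions at the level of $Z'\to Y$.
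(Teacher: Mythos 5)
Your proposal follows the same overall route as the paper's proof: it produces two inclusions of $\pi_\ast(\widetilde L)$ into the ambient meromorphic Higgs sheaf, one through the cyclic covering and Proposition~\ref{prop_prolongation0_vs_geo} and one through $\iota$, and identifies them on $X^o$ (via the compatibility hidden in~(\ref{align_commut_s_s0})); the intersection $G^{n,0}={\rm Im}(\iota)\cap\psi_\ast({_{<\psi^{-1}(D_f)_{\rm red}+E'}}H^{n,0}_{h^o})$ then forces the conclusion. This is precisely the content of the map $\widetilde\alpha$ and the comparison $\widetilde\alpha|_{X^o}=\alpha$ that the paper carries out.

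Where you genuinely add something is in the last paragraph: you notice that a verbatim application of Proposition~\ref{prop_prolongation0_vs_geo} to $g:Z\to\widetilde X$ (whose non-submersive locus is $D_g$, not $\pi^{-1}(D_f)$) only gives the inclusion into $\psi_\ast\bigl({_{<\rho^{-1}(D_g)_{\rm red}+E'}}H^{n,0}_{h^o}\bigr)$, which is strictly larger than the target $\psi_\ast\bigl({_{<\psi^{-1}(D_f)_{\rm red}+E'}}H^{n,0}_{h^o}\bigr)$ appearing in the paper (since $\rho^{-1}(D_g)_{\rm red}\geq\psi^{-1}(D_f)_{\rm red}$ and the prolongation is increasing in the divisor). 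The paper asserts the stronger inclusion without comment. Your repair is correct: the extra components of $\rho^{-1}(D_g)_{\rm red}$ lie over $X^o$, Lemma~\ref{lem_VZ_Xo} shows that $\iota$ already lands in $\psi_\ast\bigl({_{<\bm 0}}H^{n,0}_{h^o}\bigr)$ over $\psi^{-1}(X^o)$, and since the condition defining ${_{<A}}H^{n,0}_{h^o}$ is a component-wise growth condition on the locally free sheaf of Theorem~\ref{thm_parabolic}, checking it at a generic point of each such component (all of which lie in $\psi^{-1}(X^o)$) suffices. So your two inclusions together pin the image into the smaller prolongation. In short, same strategy as the paper, but you make explicit a step about which prolongation divisor is actually produced by Proposition~\ref{prop_prolongation0_vs_geo} and why one may descend to the tighter one; this is a useful clarification rather than a divergence.

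One small caveat: the claim that the two constructions coincide on $X^o$ is stated somewhat informally ("both arise from pulling back the section of $\widetilde L$ to $Z'$"). The paper pins this down with the explicit commutative square~(\ref{align_commut_s_s0}), and a clean write-up should cite it rather than appeal to intuition, since the two maps live in different presentations of the same sheaf and the identification is exactly what~(\ref{align_commut_s_s0}) provides.
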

	\begin{proof}
		Consider the natural map
		\begin{align*}
			\widetilde{\alpha}:\pi_\ast(\widetilde{L})\to \pi_\ast\widetilde{f}_\ast(\widetilde{f}^\ast \widetilde{L})\simeq \pi_\ast\widetilde{f}_\ast(\widetilde{B}^{-1}\otimes\omega_{\widetilde{Y}/\widetilde{X}})\subset \pi_\ast g_\ast(\omega_{Z/\widetilde{X}})\subset \psi_{\ast}\left({_{<\psi^{-1}(D_f)_{\rm red}+E'}}H^{n,0}_{h^o}\right),
		\end{align*}
		where the last inclusion is deduced from Proposition \ref{prop_prolongation0_vs_geo}. Now it suffices to show that 
		\begin{align}\label{align_lem33}
			{\rm Im}(\widetilde{\alpha})|_{X^o}\subset{\rm Im}(\iota)|_{X^o}={\rm Im}(\iota_{X^o}).
		\end{align}
		Consider the natural map
		\begin{align*}
			L^{o}\to f^o_{\ast}(f^{o\ast} L^{o})\simeq f^o_{\ast}((B^o)^{-1}\otimes\Omega^{n}_{Y^o/X^o}). 
		\end{align*}
		Since $\psi_{\ast}\left({_{<\psi^{-1}(D_f)_{\rm red}+E'}}H^{n,0}_{h^o}\right)$ is torsion free, the composition map 
		$$\alpha: L^o\to f^o_\ast((B^o)^{-1}\otimes\Omega^{n}_{Y^o/X^o})\stackrel{\iota_{X^o}}{\to} \psi_{\ast}\left({_{<\psi^{-1}(D_f)_{\rm red}+E'}}H^{n,0}_{h^o}\right)|_{X^o}$$
		is injective. So it induces an injective morphism $$L^o\to{\rm Im}\left(\iota_{X^o}:f_\ast((B^o)^{-1}\otimes\Omega^{n}_{Y^o/X^o})\to \psi_{\ast}\left({_{<\psi^{-1}(D_f)_{\rm red}+E'}}H^{n,0}_{h^o}\right)|_{X^o}\right).$$
		According to (\ref{align_commut_s_s0}), one obtains that $\widetilde{\alpha}|_{X^o}=\alpha$. Consequently,
		$${\rm Im}(\widetilde{\alpha})|_{X^o}=\alpha(L^o)\subset {\rm Im}(\iota_{X^o}).$$
		The lemma is proved.
	\end{proof}
	\section{Admissible families of canonical stable minimal models}\label{section_boundedness}
	\subsection{Stable minimal models and their moduli}\label{section_moduli}
	We review the main results in \cite{Birkar2022} that will be used in the sequel. For the purpose of the present article, everything is defined over ${\rm Spec}(\bC)$. A \emph{stable minimal model} is a triple $(X,B),A$ where $X$ is a reduced connected projective scheme of finite type over ${\rm Spec}(\bC)$ and $A,B\geq0$ are $\bQ$-divisor satisfying the following conditions:
	\begin{itemize}
		\item $(X,B)$ is a projective connected slc pair,
		\item $K_X+B$ is semi-ample,
		\item $K_X+B+tA$ is ample for some $t>0$, and
		\item $(X,B+tA)$ is slc for some $t>0$.
	\end{itemize}
	Let
	$$d\in\bN,c\in\bQ^{\geq0},\Gamma\subset\bQ^{>0} \textrm{ a finite set, and }\sigma\in\bQ[t].$$
	A $(d,\Phi_c,\Gamma,\sigma)$-stable minimal model is a stable minimal model $(X,B),A$ satisfying the following conditions:
	\begin{itemize}
		\item $\dim X=d,$
		\item the coefficients of $A$ and $B$ are in $c\bZ^{\geq0}$,
		\item ${\rm vol}(A|_F)\in\Gamma$ where $F$ is any general fiber of the fibration $f:X\to Z$ determined by $K_X+B$, and
		\item ${\rm vol}(K_X+B+tA)=\sigma(t)$ for $0\leq t\ll 1$.
	\end{itemize}
	Let $S$ be a reduced scheme over ${\rm Spec}(\bC)$. A family of $(d,\Phi_c,\Gamma,\sigma)$-stable minimal models over $S$ consists of a projective morphism $X\to S$ of schemes and $\bQ$-divisors $A$ and $B$ on $X$ satisfying the following conditions:
	\begin{itemize}
		\item $(X,B+tA)\to S$ is a locally stable family (that is, $K_{X/S}+B+tA$ is $\bQ$-Cartier) for every sufficiently small rational number $t\geq0$,
		\item $A=cN$, $B=cD$ where $N, D\geq0$ are relative Mumford divisors, and
		\item $(X_s,B_s), A_s$ is a $(d,\Phi_c,\Gamma,\sigma)$-stable minimal model for each point $s\in S$.
	\end{itemize}
	Let ${\rm Sch}_{\bC}^{\rm red}$ denote the category of reduced schemes defined over ${\rm Spec}(\bC)$. Define 
	$$\sM^{\rm red}_{\rm slc}(d,\Phi_c,\Gamma,\sigma): S\mapsto\{\textrm{family of } (d,\Phi_c,\Gamma,\sigma)-\textrm{stable minimal models over }S\},$$
	the functor of groupoids over ${\rm Sch}_{\bC}^{\rm red}$.
	\begin{thm}[Birkar \cite{Birkar2022}]\label{thm_moduli_stable_var}
		There is a proper Deligne-Mumford stack $\sM_{\rm slc}(d,\Phi_c,\Gamma,\sigma)$ over $\bC$ such that the following hold.
		\begin{itemize}
			\item $\sM_{\rm slc}(d,\Phi_c,\Gamma,\sigma)|_{{\rm Sch}_{\bC}^{\rm red}}=\sM^{\rm red}_{\rm slc}(d,\Phi_c,\Gamma,\sigma)$ as functors of groupoids.
			\item $\sM_{\rm slc}(d,\Phi_c,\Gamma,\sigma)$ admits a  projective good coarse moduli space $M_{slc}(d,\Phi_c,\Gamma,\sigma)$.
		\end{itemize}
	\end{thm}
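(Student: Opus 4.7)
My plan is to follow the Kollár--Shepherd-Barron--Alexeev--Hacking--Kovács framework for moduli of higher-dimensional stable pairs, with Birkar's adaptations to accommodate the extra polarization datum $A$. The argument has four parts---boundedness, Hilbert-scheme construction, the valuative criterion, and projective coarse space---of which boundedness is by far the deepest.

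\textbf{First, boundedness.} I would show that the set of $(d,\Phi_c,\Gamma,\sigma)$-stable minimal models forms a bounded family. The idea is to apply boundedness theorems for slc pairs of bounded volume with DCC coefficients (after Hacon--McKernan--Xu, Birkar) to the ample pair $(X, B+tA)$ for a fixed small $t\in\bQ$, so that $\sigma(t)$ supplies the volume bound. To bound the decomposition $B$ versus $A$ separately, I would use the canonical fibration $X\to Z$ defined by $K_X+B$, the finiteness of $\Gamma$ (which controls $\mathrm{vol}(A|_F)$ on a general fiber), and the DCC coefficient condition $c\bZ^{\geq 0}$ to recover $A$ from its restriction to general fibers and its coefficients.

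\textbf{Second, the stack structure.} Once boundedness is in hand, I would fix a divisible $N$ so that $N(K_X+B+tA)$ is uniformly very ample for a fixed small $t$, embed all objects in a common projective space via this linear system, and realize the moduli functor as a locally closed substack of a quotient $[H/\mathrm{PGL}]$ of a relevant Hilbert scheme. The subscheme $H$ cuts out the slc condition, the $\bQ$-divisorial structure of $A$ and $B$ (via Kollár's hulls-and-husks formalism for families of divisors on slc pairs), the semiampleness of $K_X+B$, and the numerical invariants; finiteness of automorphisms of slc stable minimal models yields the Deligne--Mumford property. Separatedness and properness then follow from the valuative criterion: separatedness is uniqueness of the slc ample model over the generic point of a DVR, while properness is stable reduction. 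Given a family over a punctured disc, after a finite base change I would apply semistable reduction, run a $(K+B+tA)$-MMP over the disc, and take the relative ample model. The novel technical point over the classical KSB case is that $A$ must be tracked separately and shown to survive the MMP operations that are guided by the composite divisor $K_X+B+tA$.

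\textbf{Third, projective coarse moduli.} Keel--Mori produces a coarse moduli algebraic space once the stack is a separated Deligne--Mumford stack; projectivity is extracted from positivity of the determinant of a pluricanonical pushforward of the form $\det f_{\ast}\bigl(\omega_{X/S}^{[m]}(\lfloor mB + mtA\rfloor)\bigr)$ for $m$ sufficiently divisible, following Kollár's ampleness lemma together with its refinements by Fujino, Kovács--Patakfalvi and Xu--Zhuang in the slc setting. The hard part will be boundedness: controlling $A$, which does not enter the log canonical pair $(X,B)$ and could a priori degenerate arbitrarily within its numerical class, is the genuine novelty, whereas once boundedness is settled the remaining steps are standard---though far from routine---applications of the established moduli machinery.
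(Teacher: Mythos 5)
The paper does not prove this theorem: its ``proof'' is a one-line citation to Birkar's Theorem~1.14 together with the remark that the stack is the quotient $\left[M_{\rm slc}^e(d,\Phi_c,\Gamma,\sigma,a,r,\bP^n)/{\rm PGL}_{n+1}(\bC)\right]$. Your proposal is therefore doing something the paper never attempts: it gives a high-level reconstruction of the argument inside Birkar's cited work. That reconstruction is broadly faithful to how Birkar actually proceeds---boundedness via DCC/volume results applied to the ample pair $(X,B+tA)$ together with the Iitaka fibration and the datum $\Gamma$; a Hilbert-scheme parameter space and a ${\rm PGL}$-quotient once a uniform polarization $r(K_X+B+aA)$ is secured (which is exactly the paper's polarization data $(a,r,j)$ of \S\ref{section_polarization_moduli}); Keel--Mori for the coarse space; and Koll\'ar's ampleness criterion, in the slc refinements of Fujino and Kov\'acs--Patakfalvi, for projectivity.

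Two points are under-addressed in your sketch and are genuinely load-bearing in Birkar's construction. First, the statement you are proving is deliberately phrased as an identification of the stack with $\sM^{\rm red}_{\rm slc}$ only after restriction to reduced bases; the extension of the functor to non-reduced test schemes is a real step (it is where Koll\'ar's condition on hulls and husks and the choice of a ``right'' family of divisors enters), and your outline treats it as an afterthought inside the Hilbert-scheme discussion. Second, in the stable-reduction step you propose running a $(K+B+tA)$-MMP over a disc, but the limiting pair is slc rather than lc, so one must first normalize, run the MMP on the lc pieces, and re-glue along the conductor; tracking $A$ through the normalization and gluing is precisely where the extra polarization datum makes the slc case harder than the klt one. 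Neither omission indicates a misconception---your sketch correctly identifies boundedness and the control of $A$ as the deep points---but they are the two places where Birkar's proof contains substance that a KSBA-template summary would miss.
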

	\begin{proof}
		See the proof of \cite[Theorem 1.14]{Birkar2022}. Using the notations in \cite[\S 10.7]{Birkar2022}, we have $$\sM_{\rm slc}(d,\Phi_c,\Gamma,\sigma)=\left[M_{\rm slc}^e(d,\Phi_c,\Gamma,\sigma,a,r,\bP^n)/{\rm PGL}_{n+1}(\bC)\right],$$
		where the right hand side is the stacky quotient.
	\end{proof}
	A stable minimal model $(X,B),A$ is called a lc stable minimal model if $(X,B)$ is a lc pair.
	Let  $\sM_{\rm lc}(d,\Phi_c,\Gamma,\sigma)\subset \sM_{\rm slc}(d,\Phi_c,\Gamma,\sigma)$ denote the open substack that consists of  $(d,\Phi_c,\Gamma,\sigma)$-lc stable minimal models. We use $M_{\rm lc}(d,\Phi_c,\Gamma,\sigma)$ to denote the quasi-projective coarse moduli spaces of $\sM_{\rm lc}(d,\Phi_c,\Gamma,\sigma)$.
	\subsection{Polarization on $M_{\rm slc}(d,\Phi_c,\Gamma,\sigma)$}\label{section_polarization_moduli}
	In this subsection we consider some natural ample $\bQ$-line bundles on $M_{\rm slc}(d,\Phi_c,\Gamma,\sigma)$. Their constructions are implicit in the proof of \cite[Theorem 1.14]{Birkar2022}, depending on the ampleness criterion by Koll\'ar \cite{Kollar1990}. 
	Fix a data $d,\Phi_c,\Gamma,\sigma$. Since $\sM_{\rm slc}(d,\Phi_c,\Gamma,\sigma)$ is of finite type, there are constants $$(a,r,j)\in\bQ^{\geq0}\times(\bZ^{>0})^{2},$$ depending only on $d,\Phi_c,\Gamma,\sigma$ such that  every $(d,\Phi_c,\Gamma,\sigma)$-stable minimal model $(X,B),A$ satisfies the following conditions (c.f. \cite[Lemma 10.2]{Birkar2022}):
	\begin{itemize}
		\item $X+B+aA$ is slc,
		\item $r(K_X+B+aA)$ is a very ample integral Cartier divisor with 
		$$H^i(X,kr(K_X+B+aA))=0,\quad\forall i>0,\forall k>0,$$
		\item the embedding $X\hookrightarrow\bP(H^0(X,r(K_X+B+aA)))$ is defined by degree $\leq j$ equations, and
		\item the multiplication map $$S^j(H^0(X,r(K_X+B+aA)))\to H^0(X,jr(K_X+B+aA))$$ is surjective.
	\end{itemize}
	\begin{defn}
		$(a,r,j)\in\bQ^{\geq0}\times(\bZ^{>0})^{2}$ that satisfies the conditions above is called a \emph{ $(d,\Phi_c,\Gamma,\sigma)$-polarization data}.
	\end{defn}
	Let $(a,r,j)\in\bQ^{\geq0}\times(\bZ^{>0})^{2}$ be a $(d,\Phi_c,\Gamma,\sigma)$-polarization data.
	Let $(X,B),A\to S$ be a family of $(d,\Phi_c,\Gamma,\sigma)$-stable minimal models. Then $f_\ast(r(K_{X/S}+B+aA))$ is locally free and commutes with an arbitrary base change. Therefore the assignment
	$$f:(X,B),A\to S\in\sM_{\rm slc}(d,\Phi_c,\Gamma,\sigma)(S)\mapsto f_\ast(r(K_{X/S}+B+aA))$$
	gives a locally free coherent sheaf on the stack $\sM_{\rm slc}(d,\Phi_c,\Gamma,\sigma)$, denoted by $\Lambda_{a,r}$. Let $\lambda_{a,r}:=\det(\Lambda_{a,r})$. Since $\sM_{\rm slc}(d,\Phi_c,\Gamma,\sigma)$ is Deligne-Mumford, some power $\lambda_{a,r}^{\otimes k}$ descends to a line bundle on $M_{\rm slc}(d,\Phi_c,\Gamma,\sigma)$. For this reason we regard $\lambda_{a,r}$ as a $\bQ$-line bundle on $M_{\rm slc}(d,\Phi_c,\Gamma,\sigma)$. 
	\begin{prop}\label{prop_ample_line_bundle_moduli}
		Let $(a,r,j)\in\bQ^{\geq0}\times(\bZ^{>0})^{2}$ be a $(d,\Phi_c,\Gamma,\sigma)$-polarization data. Then $\lambda_{a,r}$ is ample on $M_{\rm slc}(d,\Phi_c,\Gamma,\sigma)$.
	\end{prop}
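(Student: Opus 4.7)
The strategy is to identify $\lambda_{a,r}$ with the natural polarization appearing implicitly in Birkar's construction of $M_{\rm slc}(d,\Phi_c,\Gamma,\sigma)$ and invoke the ampleness of that polarization via Koll\'ar's ampleness criterion. Concretely, the polarization data $(a,r,j)$ equips every $(d,\Phi_c,\Gamma,\sigma)$-stable minimal model $(X,B),A$ with a canonical projectively normal embedding into $\bP^N$, where $N+1$ is the (constant) rank of $\Lambda_{a,r}$, cut out by equations of degree $\leq j$. Because $f_\ast(r(K_{X/S}+B+aA))$ is locally free and commutes with base change on every family, the surjection $f^\ast \Lambda_{a,r,S} \twoheadrightarrow \sO_X(r(K_{X/S}+B+aA))$ is the universal embedding, and by standard Hilbert-scheme arguments yields the quotient-stack presentation
$$\sM_{\rm slc}(d,\Phi_c,\Gamma,\sigma) = \bigl[M^e_{\rm slc}(d,\Phi_c,\Gamma,\sigma,a,r,\bP^N)/{\rm PGL}_{N+1}(\bC)\bigr]$$
of \cite[\S 10.7]{Birkar2022}. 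Under this presentation $\Lambda_{a,r}$ pulls back to the trivial rank $N+1$ bundle on $M^e_{\rm slc}$ with its tautological ${\rm PGL}_{N+1}$-linearization, and so $\lambda_{a,r} = \det \Lambda_{a,r}$ descends, after a suitable power, to a genuine line bundle on the coarse moduli space $M_{\rm slc}$.

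Ampleness of this $\bQ$-line bundle is a direct application of Koll\'ar's ampleness criterion \cite{Kollar1990}: one reduces to checking that for every proper irreducible curve $T \to M_{\rm slc}$, after a finite base change producing a genuine family $f:(X,B),A \to T$ whose induced classifying map is still finite, the pullback $\lambda_{a,r}|_T = \det f_\ast(r(K_{X/T}+B+aA))$ has strictly positive degree. This strict positivity combines two ingredients: Viehweg-type weak positivity of $f_\ast(r(K_{X/T}+B+aA))$ for slc families (so that the bundle is nef), and a bigness statement for non-isotrivial families (so that the determinant has strictly positive degree). Both ingredients are precisely those Birkar develops in \cite[\S 10]{Birkar2022} as the technical heart of the proof of \cite[Theorem 1.14]{Birkar2022}.

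The main obstacle is thus essentially bookkeeping: verifying that the specific bundle $\lambda_{a,r}$ singled out in this subsection is indeed (a positive rational multiple of) the line bundle on $M_{\rm slc}$ whose ampleness is established in the course of proving \cite[Theorem 1.14]{Birkar2022}. This amounts to tracing through the descent from the PGL-equivariant setting on $M^e_{\rm slc}$ to the coarse space, and checking that the determinant of the standard representation of ${\rm GL}_{N+1}$ corresponds, up to a character of ${\rm PGL}_{N+1}$ and an integral power, to the GIT linearization used by Birkar. No new positivity theorem is required; the heart of the argument is imported wholesale from \cite{Birkar2022,Kollar1990}.
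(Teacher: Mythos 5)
Your proposal misstates Kollár's ampleness criterion in a way that breaks the argument. The criterion of \cite[\S 2.9]{Kollar1990} does \emph{not} reduce ampleness to ``strictly positive degree on every proper curve''---that is not a valid ampleness criterion on a proper algebraic space, as nef line bundles with positive degree on every curve need not be ample. What Kollár's ampleness lemma actually does is combine \emph{nefness on curves} of the pushforward bundle $f_\ast(r(K_{X/T}+B+aA))$ with the \emph{finiteness of the classifying map} into the GIT quotient of the Grassmannian (which is built into the boundedness of the moduli problem): the finite map pulls back an ample class from the GIT quotient, and nefness suffices to identify that pullback with $\det\Lambda_{a,r}$ up to nef corrections, hence $\lambda_{a,r}$ is ample. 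No separate ``bigness statement for non-isotrivial families'' is needed or used; bigness is a \emph{consequence}, not an input. By inserting bigness as a required ingredient you are both invoking a result whose availability you have not established and, more fundamentally, routing the argument through a false ampleness criterion.

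The paper's proof is exactly the correct reduction: by Kollár's ampleness lemma it suffices to check nefness of $f_\ast(r(K_{X/S}+B+aA))$ when $S$ is a smooth projective curve, and this nefness for slc families is supplied by Fujino \cite{Fujino2018} and Kovács--Patakfalvi \cite{Kovacs2017}. You do not cite these references; attributing the nefness to ``Birkar \S 10'' is imprecise (Birkar himself relies on the same positivity results). Your first paragraph, identifying $\lambda_{a,r}$ with the polarization in Birkar's quotient-stack presentation, is fine but not where the substance lies. The fix is to replace ``positive degree on curves $=$ nef $+$ big'' with ``nef on curves plus finite classifying map, then invoke Kollár's lemma,'' and to cite Fujino and Kovács--Patakfalvi for the nefness.
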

	\begin{proof}
		By the same arguments as in \cite[\S 2.9]{Kollar1990}, it suffices to show that $f_\ast(r(K_{X/S}+B+aA))$ is nef when $S$ is a smooth projective curve. This has been accomplished by Fujino \cite{Fujino2018} and Kov\'acs-Patakfalvi \cite{Kovacs2017}.
	\end{proof}
	\subsection{A technical lemma for semistable families}\label{section_ssred}
	A morphism $f:Y\to X$ between smooth varieties is called \emph{semistable} (resp. \emph{strictly semistable}) if there is a (not necessarily connected) smooth divisor $D$ on $X$ such that the following conditions hold.
	\begin{enumerate}
		\item $f$ is a submersion over $X\backslash D$ and the schematic preimage $f^{-1}(D)$ is a (resp. reduced) simple normal crossing divisor on $Y$. 
		\item $f$ sends submersively any
		stratum of $f^{-1}(D)_{\rm red}$ onto an irreducible component of $D$.
	\end{enumerate}
	A morphism $f:Y\to X$ between smooth varieties is \emph{strictly semistable in codimension one} if there is a dense Zariski open subset $U\subset X$ with ${\rm codim}_X(X\backslash U)\geq 2$, such that $f|_{f^{-1}(U)}:f^{-1}(U)\to U$ is strictly semistable.
	
	For a surjective morphism $Y\to X$ between algebraic varieties, let $Y^{[r]}_X$ denote the main component of the $r$-fiber product $Y\times_XY\times_X\cdots\times_XY$ (that is, the union of irreducible components that is mapped onto $X$). Let $f^{[r]}:Y^{[r]}_X\to X$ denote the projection map. The following lemma is known to experts. We present the proof for the convenience of readers.
	\begin{lem}\label{lem_mild_pushforward}
		Let $f:Y\to X$ be a strictly semistable morphism and
		$\tau:Y^{(r)}\to Y^{[r]}_X$ a desingularization. Denote $f^{(r)}=f^{[r]}\tau$. Then the following hold.
		\begin{enumerate}
			\item $\tau_{\ast}(\omega_{Y^{(r)}}^{\otimes k})\simeq\omega_{Y^{[r]}_X}^{\otimes k}$ for every $k\geq 1$, where $\omega_{Y^{[r]}_X}$ is the dualizing sheaf (invertible since $Y^{[r]}_X$ is Gorenstein).
			\item $f^{(r)}_\ast(\omega_{Y^{(r)}/X}^{\otimes k})$ is a reflexive sheaf for every $k\geq 1$.
			\item $f^{(r)}_\ast(\omega_{Y^{(r)}/X}^{\otimes k})\simeq (f_\ast(\omega_{Y/X}^{\otimes k})^{\otimes r})^{\vee\vee}$ for every $k\geq 1$.
		\end{enumerate}
	\end{lem}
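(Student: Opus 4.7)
The plan is to analyze $Y^{[r]}_X$ étale-locally through the toric model provided by semistability and then deduce the three claims in order. Near any point of $D\subset X$, semistability of $f$ writes $f$ étale-locally as
\[
f\colon\{x_1\cdots x_s=t\}\longrightarrow\operatorname{Spec}\bC[t]\times U,
\]
with $U$ smooth, so $Y^{[r]}_X$ is étale-locally the toric variety
\[
\bigl\{x^{(1)}_1\cdots x^{(1)}_s=\cdots=x^{(r)}_1\cdots x^{(r)}_s=t\bigr\}.
\]
Inspection of the defining fan shows that this toric variety is irreducible, Gorenstein with canonical singularities, and flat over $X$ via $f^{[r]}$.

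For (1), canonical Gorenstein singularities yield $K_{Y^{(r)}}=\tau^{\ast}K_{Y^{[r]}_X}+E$ for some effective $\tau$-exceptional divisor $E$, and the projection formula together with $\tau_\ast\sO_{Y^{(r)}}(kE)=\sO_{Y^{[r]}_X}$ gives $\tau_\ast\omega_{Y^{(r)}}^{\otimes k}\simeq\omega_{Y^{[r]}_X}^{\otimes k}$ for all $k\geq 1$. For (2), flatness and Gorensteinness of $f^{[r]}$ make $\omega_{Y^{[r]}_X/X}^{\otimes k}$ an invertible sheaf, and combined with (1) we get $f^{(r)}_\ast\omega_{Y^{(r)}/X}^{\otimes k}\simeq f^{[r]}_\ast\omega_{Y^{[r]}_X/X}^{\otimes k}$. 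Choosing the resolution $\tau$ to be a toric resolution making $f^{(r)}$ semistable in codimension one and invoking Viehweg's local freeness theorem for semistable families then shows that this pushforward is locally free off a codimension-two subset of $X$; since it is also torsion-free and $X$ is smooth, reflexivity on $X$ follows.

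For (3), over the smooth locus $U:=X\setminus D$ the $r$-fold fiber product coincides with the ordinary product, and the Künneth formula gives a canonical identification $\bigotimes^{r}(f_\ast\omega_{Y/X}^{\otimes k})|_U\simeq f^{(r)}_\ast\omega_{Y^{(r)}/X}^{\otimes k}|_U$. Pulling back $\omega_{Y/X}^{\otimes k}$ along the $r$ natural projections $Y^{(r)}\to Y$ and multiplying produces a morphism
\[
\bigotimes^{r}f_\ast\omega_{Y/X}^{\otimes k}\longrightarrow f^{(r)}_\ast\omega_{Y^{(r)}/X}^{\otimes k}
\]
restricting to the Künneth isomorphism on $U$. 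Since the target is reflexive by (2), this map factors uniquely through the reflexive hull, producing a morphism of reflexive sheaves $\bigl(\bigotimes^{r}f_\ast\omega_{Y/X}^{\otimes k}\bigr)^{\vee\vee}\to f^{(r)}_\ast\omega_{Y^{(r)}/X}^{\otimes k}$ that is an isomorphism on $U$.

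The main obstacle is to upgrade this to a global isomorphism: one must verify the map is already an isomorphism at the generic point of each component of $D$, so that the two reflexive sheaves coincide off a codimension-two set of the smooth variety $X$ and must therefore agree everywhere. This codimension-one check is handled by an explicit computation in the toric local model of the first paragraph; reducedness of $f^{-1}(D)$ and the transverse simple-normal-crossing condition built into semistability are exactly what is needed to show that $f^{[r]}_\ast\omega_{Y^{[r]}_X/X}^{\otimes k}$ acquires no sections beyond those produced by products of sections of $f_\ast\omega_{Y/X}^{\otimes k}$.
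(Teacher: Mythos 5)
Your argument for claim (1) is essentially the same as the paper's (which cites Abramovich--Karu's result that weakly semistable fibre products are normal, rational and Gorenstein, hence canonical), so that part is fine.

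Your argument for claim (2) has a genuine gap. You conclude that $f^{(r)}_\ast(\omega_{Y^{(r)}/X}^{\otimes k})$ is locally free off a codimension-two subset of $X$, and then assert that, because it is also torsion-free and $X$ is smooth, it must be reflexive. That implication is false: the ideal sheaf of a point on a smooth surface is torsion-free and locally free in codimension one, yet it is not reflexive. Knowing the sheaf is a vector bundle in codimension one tells you nothing about whether sections extend across the bad locus. What one actually needs is the \emph{normality} of the sheaf, i.e.\ that sections over $U\setminus Z$ extend to $U$ when ${\rm codim}(Z)\geq 2$, and this is exactly what the paper proves directly: since $f^{[r]}$ is flat, $(f^{[r]})^{-1}(Z)$ has codimension $\geq 2$ in $(f^{[r]})^{-1}(U)$, and since $Y^{[r]}_X$ is normal with $\omega^{\otimes k}_{Y^{[r]}_X/X}$ invertible, the section extends upstairs and hence downstairs. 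That extension property, together with torsion-freeness, gives reflexivity; local freeness in codimension one does not.

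For claim (3) you correctly identify the map and recognize that the crux is a codimension-one comparison, but you then delegate the actual verification to ``an explicit computation in the toric local model'' without performing it. The paper's route shows this check is unnecessary: instead of taking the open set to be $X\setminus D$ (whose complement has codimension one), take $U$ to be the locus where both $f_\ast(\omega_{Y/X}^{\otimes k})$ and $f^{[r]}_\ast(\omega_{Y^{[r]}_X/X}^{\otimes k})$ are locally free. Since these sheaves are torsion-free, $X\setminus U$ has codimension $\geq 2$. Over $U$ one uses Gorensteinness of $f$ and $f^{[r]}$ to write $\omega_{Y^{[r]}_X/X}^{\otimes k}\simeq\otimes_i p_i^\ast\omega_{Y/X}^{\otimes k}$ and flat base change to get the identification, and then reflexivity of both sides finishes. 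This sidesteps the codimension-one analysis your proposal leaves open.
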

	\begin{proof}
		A strictly semistable morphism is weakly semistable in the sense of Abramovich-Karu \cite{Abramovich2000}. Hence $Y^{[r]}_X$ has only normal, rational and Gorenstein singularities by \cite[Proposition 6.4]{Abramovich2000}. Consequently, it can be inferred that $Y^{[r]}_X$ has canonical singularities. The first claim is proved. 
		
		For the second claim, it suffices to show that any section of $f^{[r]}_\ast(\omega_{Y^{[r]}_X/X}^{\otimes k})\simeq f^{(r)}_\ast(\omega_{Y^{(r)}/X}^{\otimes k})$ extends across an arbitrary locus of codimension $\geq 2$ . Let $U\subset X$ be an open subset and $Z\subset U$ a Zariski closed subset of codimension $\geq 2$. Let $$s\in \Gamma(U\backslash Z, f^{[r]}_\ast(\omega_{Y^{[r]}_X/X}^{\otimes k}))=\Gamma((f^{[r]})^{-1}(U\backslash Z), \omega_{Y^{[r]}_X/X}^{\otimes k}).$$
		Since $f$ is flat, so is $f^{[r]}$. Hence $(f^{[r]})^{-1}(Z)$ is of codimension $\geq 2$ in $(f^{[r]})^{-1}(U)$. Since $Y^{[r]}_X$ is normal and $\omega_{Y^{[r]}_X/X}^{\otimes k}$ is invertible, there is 
		$$\widetilde{s}\in \Gamma(U, f^{[r]}_\ast(\omega_{Y^{[r]}_X/X}^{\otimes k}))=\Gamma((f^{[r]})^{-1}(U), \omega_{Y^{[r]}_X/X}^{\otimes k})$$
		that extends $s$. This proves Claim (2).
		
		Finally we show the last claim. Since $f^{[r]}$ and $f$ are Gorenstein, one obtains that 
		$$\omega_{Y^{[r]}_X/X}^{\otimes k}\simeq\otimes_{i=1}^r p_i^\ast\omega_{Y/X}^{\otimes k}$$
		where $p_i:Y^{[r]}_X\to Y$ denotes the projection to the $i$th component. Let $U\subset X$ be the largest open subset on which $f^{[r]}_\ast(\omega_{Y^{[r]}_X/X}^{\otimes k})$ and $f_\ast(\omega_{Y^/X}^{\otimes k})$ are locally free. Since the relevant sheaves are torsion free, $X\backslash U$ is of  codimension $\geq 2$. By the flat base change we obtain that
		$$f^{(r)}_\ast(\omega_{Y^{(r)}/X}^{\otimes k})|_U\simeq f_\ast(\omega_{Y/X}^{\otimes k})^{\otimes r}|_U.$$
		Since $f^{(r)}_\ast(\omega_{Y^{(r)}/X}^{\otimes k})$ and $ (f_\ast(\omega_{Y/X}^{\otimes k})^{\otimes r})^{\vee\vee}$ are reflexive, we have proven Claim (3).
	\end{proof}
	\subsection{Higgs sheaves associated to a family of lc stable minimal models}\label{section_VZHiggs_stable_family}
	The aim of this section is to prove Theorem \ref{thm_main_VZHiggs} (=Theorem \ref{thm_big_Higgs_sheaf}).
	Recall that a family $f:(X,\Delta)\to S$ is \emph{log smooth} if $f$ is a smooth projective morphism between varieties and $\Delta$ is a simple normal crossing $\bQ$-divisor on $X$ such that every stratum of ${\rm supp}(\Delta)$ is smooth over $S$.
	\begin{defn}\label{defn_admissible}
		Let $f:(X,B),A\to S$ be a family of stable minimal models over a variety $S$. A \emph{log smooth birational model} of $f$ is a log smooth family $(X',\Delta')\to S$, together with a birational map $g:X'\dashrightarrow X$ over $S$ such that the following conditions hold.
		\begin{enumerate}
			\item $g$ is defined on a dense Zariski open subset of ${\rm supp}(\Delta')$ and $A+B$ is the birational transform of $\Delta'$.
			\item For every $s\in S(\bC)$, $g$ is defined on Zariski open subsets of $X'_s$ and ${\rm supp}(\Delta'_s)$, and $g|_{X'_s}:X'_s\dashrightarrow X_s$ is a birational map.
		\end{enumerate}
	    If $g$ only satisfies Condition (1), we say that $(X',\Delta')\to S$ is a \emph{log smooth weakly birational model} of $f$.
		$f$ is called \emph{admissible} (resp. \emph{weakly admissible}) if it admits a log smooth (resp. weakly) birational model and the coefficients of $B$ lie in $[0,1)$.
	\end{defn}
	\begin{lem}[Relative Kawamata's covering]\label{lem_rel_Kawamata_covering}
		Let $f:X\to S$ be a morphism between smooth projective varieties and $D$ a simple normal crossing $\bQ$-divisor on $X$ whose coefficients lie in $[0,1)$. Let $S^o\subset S$ be a Zariski open subset such that $(f^{-1}(S^o),D|_{f^{-1}(S^o)})\to S^o$ is a log smooth family. Then there is a finite surjective morphism $h:Y\to X$ satisfying the following conditions:
		\begin{enumerate}
			\item $Y$ is a smooth projective variety,
			\item $f\circ h$ is a smooth morphism over $S^o$, and
			\item there is a $\bQ$-divisor $F\geq0$ on $Y$ such that 
			\begin{align}\label{align_canonical_bundle_formula}
				h^\ast(K_X+D)=K_Y-F.
			\end{align}
		\end{enumerate}
	\end{lem}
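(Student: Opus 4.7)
The plan is to construct $h: Y \to X$ via an adaptation of Kawamata's cyclic covering lemma, performed compatibly with the fibration $f: X \to S$. Write the $\bQ$-divisor $D = \sum_i d_i D_i$ with $d_i = a_i/N$ for a common denominator $N \geq 2$, so that $1 \leq a_i \leq N-1$. For each irreducible component $D_i$ of $D$ I choose a very ample line bundle $A_i$ on $X$ such that $L_i := A_i^{\otimes N} \otimes \sO_X(-D_i)$ is also very ample, and pick a general section $s_i \in H^0(X, L_i)$ with zero divisor $Z_i$. The crucial genericity requirements are that (i) the total divisor $\sum_j D_j + \sum_i Z_i$ is simple normal crossing on $X$, and (ii) the restriction $(D + \sum_i Z_i) \cap f^{-1}(S^o)$ is log smooth over $S^o$. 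A relative Bertini argument, using very-ampleness of each $L_i$ together with fiberwise Bertini on the smooth fibers of $f|_{f^{-1}(S^o)}$, shows that such sections exist. I then form the iterated degree-$N$ cyclic cover $h: Y \to X$ associated to the isomorphisms $A_i^{\otimes N} \simeq \sO_X(D_i + Z_i)$ determined by the $s_i$; standard local computations show that $Y$ is smooth with $h^\ast D_i = N D'_i$, where $D'_i := h^{-1}(D_i)_{\mathrm{red}}$.

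For the canonical bundle formula, iterating the ramification formula for a cyclic cover of index $N$ branched along a smooth divisor gives
\begin{align*}
K_Y = h^\ast K_X + (N-1)\sum_i D'_i + (N-1)\sum_i Z'_i,
\end{align*}
where $Z'_i := h^{-1}(Z_i)_{\mathrm{red}}$. On the other hand $h^\ast D = \sum_i (a_i/N) \cdot N D'_i = \sum_i a_i D'_i$. Subtracting yields
\begin{align*}
h^\ast(K_X + D) = K_Y - \sum_i (N-1-a_i) D'_i - (N-1)\sum_i Z'_i = K_Y - F,
\end{align*}
and since $a_i \leq N-1$ the divisor $F := \sum_i (N-1-a_i) D'_i + (N-1)\sum_i Z'_i$ is effective, proving item (3).

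For smoothness of $f \circ h$ over $S^o$, condition (ii) implies that in \'etale local coordinates adapted to $f$ over any point of $S^o$, the branch locus $\sum D_i + \sum Z_i$ takes the form $\{x_1 \cdots x_k = 0\}$ with the $x_i$ extending to a relative coordinate system over $S^o$. The Kawamata cover is then \'etale-locally given by adjoining roots $y_i^N = u_i x_i$ for units $u_i$, which is manifestly smooth over $S^o$, establishing item (2); item (1) is already recorded in the first paragraph. The main obstacle I expect is the relative Bertini step: one must choose each auxiliary $Z_i$ so as to meet every fiber $X_s$ ($s \in S^o$) transversally and to remain in general position with respect to $D \cap X_s$. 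This is handled by combining the classical Bertini theorem on each smooth fiber with an openness/genericity argument, exploiting that very ampleness of $L_i$ is uniform in $s$; once this is in place, the rest of the argument is essentially formal.
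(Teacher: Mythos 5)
Your construction breaks down at the smoothness claim. For each component $D_i$ you take a single auxiliary divisor $Z_i\in|A_i^{\otimes N}\otimes\sO_X(-D_i)|$ and form the degree-$N$ cyclic cover of $X$ branched along $D_i+Z_i$. When $\dim X\geq 2$ the ample divisor $Z_i$ cannot be chosen disjoint from $D_i$, so $D_i+Z_i$ is a normal crossing divisor with nonempty singular locus $D_i\cap Z_i$. Near a point of $D_i\cap Z_i$ the cyclic cover is \'etale-locally $y_i^N=u_i\,x_i\,x_i'$, where $x_i$ cuts out $D_i$, $x_i'$ cuts out $Z_i$ and $u_i$ is a unit --- not $y_i^N=u_i\,x_i$ as you assert. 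This hypersurface is already normal but has an $A_{N-1}$-type singularity along $\{y_i=x_i=x_i'=0\}$, which persists under normalization and under taking the fiber product over $X$ with the remaining covers (which by general position are \'etale near a generic point of $D_i\cap Z_i$). So $Y$ is not smooth, item~(1) fails, and items~(2) and~(3), which you derive from the unit-times-coordinate local model and the Hurwitz formula on a smooth cover, fail with it.

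Kawamata's actual construction --- which the paper's proof invokes directly as \cite[Theorem 17]{Kawamata1981} --- avoids this by introducing $d=\dim X$ general auxiliary divisors $H_1,\dots,H_d$ (not one per component $D_i$) in general position with each other and with the $D_j$'s, so that through any point of $X$ the branch components passing through it form part of a regular system of parameters; the resulting iterated Kummer cover is smooth by a local normalization argument (cf.\ Esnault--Viehweg, \emph{Lectures on Vanishing Theorems}, Lemma~2.5). The paper's two modifications are then exactly: impose additionally that $(H_1\cup\cdots\cup H_d\cup D_1\cup\cdots\cup D_l)\cap f^{-1}(S^o)$ is relatively SNC over $S^o$ (here your relative Bertini argument, which is correct in spirit, does the work and yields item~(2)), and take the twists $m_i$ large enough to make the discrepancy $F$ effective. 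Your canonical bundle computation is sound \emph{given} a smooth cover with the stated ramification; what is missing is the replacement of your one auxiliary divisor per component by the full collection of $\dim X$ general auxiliary divisors together with the local smoothness analysis that collection enables.
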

	\begin{proof}
		The proof is the same as \cite[Theorem 17]{Kawamata1981} except two modifications. The first is that the general hyperplanes $H_1,\dots,H_d$ in loc. cit.  should satisfy that $$H_{1}+\dots+H_{d}+ D|_{f^{-1}(S^o)}$$ is a relative simple normal crossing divisor over $S^o$. The second is that one should let $m_i$ in loc. cit. be sufficiently large in order to ensure the validity of (\ref{align_canonical_bundle_formula}).
	\end{proof}
	\begin{thm}\label{thm_big_Higgs_sheaf}
		Let $f^o:(X^o,B^o),A^o\to S^o$ be a weakly admissible family of $(d,\Phi_c,\Gamma,\sigma)$-lc stable minimal models over a smooth quasi-projective variety $S^o$ which defines a generically finite morphism $\xi^o:S^o\to M_{\rm lc}(d,\Phi_c,\Gamma,\sigma)$.
		Let $S$ be a smooth projective variety containing $S^o$ as a Zariski open subset such that $D:=S\backslash S^o$ is a (reduced) simple normal crossing divisor and $\xi^o$ extends to a morphism $\xi:S\to M_{\rm lc}(d,\Phi_c,\Gamma,\sigma)$. Let $\sL$ be a line bundle on $S$. Then there exist the following data.
		\begin{enumerate}
			\item A projective birational morphism $\pi:S'\to S$ such that $S'$ is smooth, $\pi^{-1}(D)$ is a simple normal crossing divisor and $\pi$ is a composition of smooth blow-ups.
			\item A (possibly non-reduced) effective exceptional divisor $E$ of $\pi$ such that $E\cup\pi^{-1}(D)$ has a simple normal crossing support.
			\item A $\bQ$-polarized variation of Hodge structure of weight $w>0$ on $S'\backslash (E\cup \pi^{-1}(D))$ with $(H=\bigoplus_{p+q=w} H^{p,q},\theta,h)$ its associated Higgs bundle by taking the total graded quotients with respect to the Hodge filtration. Here $h$ is the Hodge metric.
		\end{enumerate}
		These data satisfy the following conditions.
		\begin{enumerate}
			\item There is a coherent ideal sheaf $I_Z$ on $S$ whose co-support $Z$ is contained in $D$ and ${\rm codim}_S(Z)\geq2$, and a natural inclusion $\sL\otimes I_Z\subset \pi_{\ast}\left({_{<\pi^{-1}(D)+E}}H^{w,0}\right)$.
			\item Let $(\bigoplus_{p=0}^w L^p,\theta)$ be the log Higgs subsheaf generated by $L^0:=\sL\otimes I_Z$, where $$L^p\subset\pi_{\ast}\left({_{<\pi^{-1}(D)+E}}H^{w-p,p}\right).$$ Then the Higgs field $$\theta:L^p|_{S\backslash (D\cup\pi(E))}\to L^{p+1}|_{S\backslash (D\cup\pi(E))}\otimes \Omega_{S\backslash (D\cup\pi(E))}$$ is holomorphic over $S\backslash D$ for each $0\leq p<n$, that is, 
			$$\theta(L^p)\subset L^{p+1}\otimes\Omega_S(\log D).$$
		\end{enumerate}
	\end{thm}
	\begin{proof}
		\emph{Step 1: Compactify the family.}
		By the properness of $\sM_{\rm slc}(d,\Phi_c,\Gamma,\sigma)$, we have the following constructions.
		\begin{itemize}
			\item A generically finite proper surjective morphism $\sigma:\widetilde{S}\to S$ from a smooth projective variety $\widetilde{S}$ such that $\sigma^{-1}(D)$ is a simple normal crossing divisor. $\sigma$ is a combination of smooth blow-ups and a finite flat morphism. 
			\item Let $\widetilde{S}^o:=\sigma^{-1}(S^o)$ and  $\widetilde{X}^o:=\widetilde{S}^o\times_{S^o}X^o$. Let $\widetilde{A}^o$ and $\widetilde{B}^o$ be the divisorial pullbacks of $A^o$ and $B^o$ on $\widetilde{X}^o$ respectively. There is a completion $\widetilde{f}:(\widetilde{X},\widetilde{B}),\widetilde{A}\to\widetilde{S}$ of the base change family $(\widetilde{X}^o,\widetilde{B}^o),\widetilde{A}^o\to \widetilde{S}^o$ such that 
			$\widetilde{f}\in\sM_{\rm slc}(d,\Phi_c,\Gamma,\sigma)(\widetilde{S})$.
		\end{itemize}
		\emph{Step 2: Take the log smooth weakly birational models.} 
		Consider the following diagram
		\begin{align*}
			\xymatrix{
				X'^o\ar@{-->}[d]^{\rho^o}\ar@/_/[dd]_-{f'^o} &\widetilde{X}'^o \ar@/_/[dd]_-{\widetilde{f}'^o} \ar[l]\ar@{-->}[d]^{\widetilde{\rho}^o} \ar[r]^{\subset} & \widetilde{X}'\ar@{-->}[d]^{\widetilde{\rho}}\ar@/_/[dd]_-{\widetilde{f}'}\\
				X^o\ar[d]^{f^o} &\widetilde{X}^o \ar[l]\ar[d] \ar[r]_{\subset} & \widetilde{X}\ar[d]^{\widetilde{f}}\\
				S^o& \widetilde{S}^o\ar[l]\ar[r]^{\subset} & \widetilde{S}
			}
		\end{align*}
		where the arrows are explained as follows.
		\begin{itemize}
			\item $f'^o:X'^o\to S^o$ is a projective smooth morphism and $\rho^o:X'^o\dashrightarrow X^o$ is a birational map over $S^o$ whose image contains a dense Zariski open subset of ${\rm supp}(A^o+B^o)$. Let the $\bQ$-divisors $A'^o$ and $B'^o$ on $X'^o$ be the birational transforms of $A^o$ and $B^o$ respectively. $f'^o:(X'^o,A'^o+B'^o)\to S^o$ is a log smooth weakly birational model of $f^o:(X^o,B^o),A^o\to S^o$.
			\item $\widetilde{f}'^o$ and $\widetilde{\rho}^o$ are the base changes of $f'^o$ and $\rho^o$ respectively. Let $\widetilde{A}'^o$ and $\widetilde{B}'^o$ be the birational transforms of $\widetilde{A}^o$ and $\widetilde{B}^o$ on $\widetilde{X}'^o$ respectively. Then $\widetilde{f}'^o:(\widetilde{X}'^o,\widetilde{A}'^o+\widetilde{B}'^o)\to \widetilde{S}^o$ is a log smooth weakly birational model of $(\widetilde{X}^o,\widetilde{B}^o),\widetilde{A}^o\to \widetilde{S}^o$.
			\item $\widetilde{f}':\widetilde{X}'\to \widetilde{S}$ is a completion of $\widetilde{f}'^o$. Since the supports of $\widetilde{A}$ and $\widetilde{B}$ do not contain any irreducible component of any fiber of $\widetilde{f}$, $\widetilde{\rho}^o$ extends naturally to a birational map $\widetilde{\rho}:\widetilde{X}'\dashrightarrow\widetilde{X}$ whose image contains a dense Zariski open subset of ${\rm supp}(\widetilde{A}+\widetilde{B})$. Hence we can define the birational transforms of $\widetilde{A}$ and $\widetilde{B}$ on $\widetilde{X}'$, denoted by $\widetilde{A}'$ and $\widetilde{B}'$ respectively. By replacing $\widetilde{X}'$ with a suitable blow-up on $\widetilde{X}'$ whose center lies in $\widetilde{f}'^{-1}\sigma^{-1}(D)$, it may be assumed that the following are valid:
			\begin{itemize}
				\item $\widetilde{X}'$ is a smooth projective variety,
				\item $\widetilde{f}'^{-1}\sigma^{-1}(D)+ \widetilde{A}'+\widetilde{B}'$ is a simple normal crossing $\bQ$-divisor, and
				\item $\widetilde{f}':\widetilde{X}'\to \widetilde{S}$ is smooth over $\widetilde{S}^o$.
			\end{itemize}
		\end{itemize}
		The constructions yield that 
		\begin{align}\label{align_NK}
			\widetilde{f}_{\ast}(r(K_{\widetilde{X}/\widetilde{S}}+\widetilde{B}+a\widetilde{A}))\simeq \widetilde{f}'_{\ast}(r(K_{\widetilde{X}'/\widetilde{S}}+\widetilde{B}'+a\widetilde{A}'))
		\end{align}
		whenever $r(K_{\widetilde{X}/\widetilde{S}}+\widetilde{B}+a\widetilde{A})$ is integral.
		
		\emph{Step 3: Kawamata's trick.} 
		Let $(a,r,j)\in\bQ^{\geq0}\times(\bZ^{>0})^{2}$ be a $(d,\Phi_c,\Gamma,\sigma)$-polarization data with $0<a\ll 1$ so that $\widetilde{B}'+a\widetilde{A}'$ is a simple normal crossing divisor with coefficients in $[0,1)$. Lemma \ref{lem_rel_Kawamata_covering} yields that there is a finite surjective morphism $\varrho:\widetilde{Y}'\to \widetilde{X}'$ satisfying the following conditions:
		\begin{enumerate}
			\item $\widetilde{Y}'$ is a smooth projective variety,
			\item $\widetilde{f}'\varrho$ is smooth over $S^o$, and
			\item there is a $\bQ$-divisor $F\geq0$ such that 
			\begin{align}\label{align_adjunction_to_smooth}
				\varrho^\ast(K_{\widetilde{X}'}+\widetilde{B}'+a\widetilde{A}')=K_{\widetilde{Y}'}-F.
			\end{align}
		\end{enumerate}
		The same construction is valid on $X'^o$. One can choose a suitable Kawamata covering map $Y^o\to X'^o$ such that $\widetilde{f}'\varrho:\widetilde{Y}'\to \widetilde{S}$ is a completion of the base change morphism $Y^o\times_{S^o}\widetilde{S}^o\to\widetilde{S}^o$. 
		Consider the following diagram
		\begin{align}
			\xymatrix{
				\widetilde{Y}'\ar[dr]_{\widetilde{f}'\varrho}&\widetilde{Y}\ar[r]\ar[l] & Y\ar[d]\\
				&\widetilde{S}\ar[r]^{\sigma} & S
			},
		\end{align}
		where $Y\to S$ is a completion of $Y^o\to S^o$ with $Y$ smooth and projective, $\widetilde{Y}\to\widetilde{Y}'$ is some modification,  biholomorphic over $Y^o\times_{S^o}\widetilde{S}^o$, so that there is a morphism $\widetilde{Y}\to Y$ making the diagram commutative. We may assume that $\sigma$ is sufficiently ramified so that $\widetilde{Y}\to\widetilde{S}$ is strictly semistable in codimension one.
		Let $g:Y\to S$ and $\widetilde{g}:\widetilde{Y}\to\widetilde{S}$ be the maps that we have just constructed. According to (\ref{align_NK}) and (\ref{align_adjunction_to_smooth}), it follows that there is an inclusion
		\begin{align}\label{align_put_polarization_to_pushforward1}
			\widetilde{f}_\ast(rK_{\widetilde{X}/\widetilde{S}}+r\widetilde{B}+ra\widetilde{A})\subset \widetilde{g}_\ast(rK_{\widetilde{Y}/\widetilde{S}}).
		\end{align}
		
		\emph{Last step.} We finish the proof by applying Theorem \ref{thm_VZ_prolongation}. Let
		$$W:=\widetilde{f}_\ast(rK_{\widetilde{X}/\widetilde{S}}+r\widetilde{B}+ra\widetilde{A})\quad\textrm{and}\quad l:={\rm rank}(W).$$
		By the construction of $\lambda_{a,r}$ one has
		\begin{align*}
			(\xi\circ\sigma)^\ast\lambda_{a,r}\simeq\det(W).
		\end{align*}
		Since $\lambda_{a,r}$ is ample (Proposition \ref{prop_ample_line_bundle_moduli}) and $\xi\circ\sigma$ is generically finite, $\det(W)$ is big.
		Take an ample line bundle $M$ on $S$ so that there is an inclusion
		$$\sigma_{\ast}\sO_{\widetilde{S}}\otimes M^{-1}\subset\sO_S^{\oplus N}$$
		for some $N>0$. Since $\det(W)$ is big, there is an inclusion $$\sigma^\ast(\sL^{\otimes r}\otimes M)\subset \det(W)^{\otimes kr}$$ for some $k>0$.
		Let $\widetilde{Y}^{(klr)}$ denote  a functorial desingularization of the main component of the $klr$-fiber product $\widetilde{Y}\times_{\widetilde{S}}\times\cdots\times_{\widetilde{S}}\widetilde{Y}$ and let $\widetilde{g}^{(klr)}:\widetilde{Y}^{(klr)}\to \widetilde{S}$ denote the projection map. Define $g^{(klr)}:Y^{(klr)}\to S$ similarly. We may assume that there is a morphism $\widetilde{Y}^{(klr)}\to Y^{(klr)}$ such that the diagram
		\begin{align*}
			\xymatrix{
				\widetilde{Y}^{(klr)}\ar[r]\ar[d]^{\widetilde{g}^{(klr)}} & Y^{(klr)} \ar[d]^{g^{(klr)}}\\
				\widetilde{S}\ar[r]^{\sigma}&S
			}
		\end{align*}
		is 	commutative.
		According to (\ref{align_put_polarization_to_pushforward1}), there is an inclusion
		$W\subset \widetilde{g}_\ast(rK_{\widetilde{Y}/\widetilde{S}})$. 
		Since $\widetilde{Y}\to \widetilde{S}$ is smooth over $\widetilde{S}^o$ and is strictly semistable in codimension one, it follows from Lemma \ref{lem_mild_pushforward} that there is a natural inclusion
		\begin{align*}
			\det(W)^{\otimes kr}\otimes I_{\widetilde{Z}}\to \left(\widetilde{g}_{\ast}(rK_{\widetilde{Y}/\widetilde{S}})^{\otimes klr}\right)^{\vee\vee}\otimes I_{\widetilde{Z}}\subset\widetilde{g}^{(klr)}_{\ast}(rK_{\widetilde{Y}^{(klr)}/\widetilde{S}}),
		\end{align*}
		where $I_{\widetilde{Z}}$ is an ideal sheaf whose co-support $\widetilde{Z}$ lies in $\sigma^{-1}(D)$ and ${\rm codim}_{\widetilde{S}}(\widetilde{Z})\geq 2$. 
		Notice that there is an inclusion
		\begin{align*}
			\widetilde{g}^{(klr)}_\ast(rK_{\widetilde{Y}^{(klr)}/\widetilde{S}})\subset\sigma^\ast g_\ast^{(klr)}(rK_{Y^{(klr)}/S})
		\end{align*}
		by \cite[Lemma 3.2]{Viehweg1983} (see also \cite[Lemma 3.1.20]{Fujino2020}). 
		Let $I$ be an ideal sheaf on $S$ such that the map $\sigma^\ast(I)\to\sO_{\widetilde{S}}$ factors through $I_{\widetilde{Z}}$.
		Taking the composition of the maps above, we get a morphism
		$$\sigma^\ast(\sL^{\otimes r}\otimes M\otimes I^{\otimes r})\to\sigma^\ast g_\ast^{(klr)}(rK_{Y^{(klr)}/S}).$$
		This induces a non-zero map
		\begin{align*}
			\sL^{\otimes r}\otimes I^{\otimes r}\to g_\ast^{(klr)}(rK_{Y^{(klr)}/S})\otimes\sigma_\ast\sO_{\widetilde{S}}\otimes M^{-1}\subset g_\ast^{(klr)}(rK_{Y^{(klr)}/S})^{\oplus N}.
		\end{align*}
		Hence we obtain a non-zero map
		\begin{align}\label{align_send_A_in1}
			\sL^{\otimes r}\otimes I^{\otimes r}\to g_\ast^{(klr)}(rK_{Y^{(klr)}/S}).
		\end{align}
		Applying Theorem \ref{thm_VZ_prolongation} to the morphism $Y^{(klr)}\to S$ (which is smooth over $S^o$) and the torsion free sheaf $\sL\otimes I$, we obtain the theorem.
	\end{proof}
	\section{Hyperbolicity properties for admissible families of lc stable minimal models}
	Based on the constructions in the previous sections (especially Theorem \ref{thm_big_Higgs_sheaf}), we investigate various hyperbolicity properties of the base of a (weakly) admissible family of lc stable minimal models. 
	\subsection{Viehweg hyperbolicity}
	\begin{thm}\label{thm_VH}
		Let $f^o:(X^o,B^o),A^o\to S^o$ be a weakly admissible family of $(d,\Phi_c,\Gamma,\sigma)$-lc stable minimal models over a smooth quasi-projective variety $S^o$ which defines a generically finite morphism $\xi^o:S^o\to M_{\rm lc}(d,\Phi_c,\Gamma,\sigma)$.
		Let $S$ be a smooth projective variety containing $S^o$ as a Zariski open subset such that $D:=S\backslash S^o$ is a divisor. Then $\omega_S(D)$ is big.
	\end{thm}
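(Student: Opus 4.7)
The approach is to invoke Theorem \ref{thm_big_Higgs_sheaf} with an ample line bundle and combine the resulting Higgs-theoretic data with the negativity principle of Proposition \ref{prop_semipositive_kernel} and a Campana--P\u{a}un bigness criterion for log cotangent sheaves.

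After a log resolution $\pi\colon\widetilde S\to S$, bigness of $\omega_{\widetilde S}(\pi^{-1}(D)_{\rm red}+\mathrm{Exc}(\pi))$ implies bigness of $\omega_S(D)$, since the former is $\pi^\ast\omega_S(D)$ twisted by an effective $\pi$-exceptional divisor; I may therefore assume $D$ is simple normal crossing. Fix an ample line bundle $\sA$ on $S$ and apply Theorem \ref{thm_big_Higgs_sheaf} with $\sL=\sA$. After replacing $S$ by the resulting birational modification, which is an isomorphism in codimension one and so preserves the bigness question, one obtains a simple normal crossing divisor $D+E$ with $E$ exceptional, a system of Hodge bundles $(H,\theta,h)$ on the complement of $D+E$, and a Higgs subsheaf $\bigoplus_{p=0}^w L^p\subset{_{<D+E}}H$ whose initial term $L^0=\sA\otimes I_Z$ is big (its double dual is $\sA$, since $Z$ has codimension $\geq2$) and whose Higgs field has log poles only along $D$:
\begin{align*}
\theta:L^p\to L^{p+1}\otimes\Omega_S(\log D).
\end{align*}

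The integrability condition $\theta\wedge\theta=0$ forces the iterated Higgs field to factor through symmetric tensors,
\begin{align*}
\theta^k:L^0\to L^k\otimes\mathrm{Sym}^k\Omega_S(\log D).
\end{align*}
Let $k$ be the largest integer for which $\theta^k\neq0$. The image of $\theta^k$ is annihilated by one more application of $\theta$, hence lies in $F^k\otimes\mathrm{Sym}^k\Omega_S(\log D)$ with $F^k:=\ker(\theta|_{L^k})\subset L^k$, and since $L^0$ has generic rank one the induced map $L^0\hookrightarrow F^k\otimes\mathrm{Sym}^k\Omega_S(\log D)$ is generically injective. Since $F^k\subset\ker\theta$, Proposition \ref{prop_semipositive_kernel} yields that $(F^k)^\vee$ is weakly positive in the sense of Viehweg. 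Combining the inclusion of the big sheaf $\sA\otimes I_Z$ into $F^k\otimes\mathrm{Sym}^k\Omega_S(\log D)$ with the weak positivity of $(F^k)^\vee$, a Campana--P\u{a}un-type criterion \cite{CP2019} extracts a subsheaf of some tensor power of $\Omega_S(\log D)$ with big determinant, and therefore $\omega_S(D)$ is big.

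The main obstacle is verifying that some iterate $\theta^k$ with $k\geq1$ is nonzero: if $\theta|_{L^0}=0$ then $L^0\subset\ker\theta$, and Proposition \ref{prop_semipositive_kernel} would force $(L^0)^\vee$, generically equal to $\sA^{-1}$, to be weakly positive, contradicting ampleness of $\sA$. A secondary technical point is that Proposition \ref{prop_semipositive_kernel} is formulated for parabolic divisors supported on the exceptional locus, while our prolongation has support on the entire $D+E$; one handles this by running the underlying singular-metric argument (Griffiths semi-negativity plus Simpson's norm estimate for sections of weight zero) on $S\setminus D$, where the parabolic structure along $D$ drops out, and then extending the resulting plurisubharmonic weights across the codimension-one locus $D$ using log-unipotency of the local monodromy. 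The generic-finiteness hypothesis on $\xi^o$ enters precisely through Theorem \ref{thm_big_Higgs_sheaf}: it is what guarantees, via ampleness of $\lambda_{a,r}$, that the determinant of the relevant pushforward and hence $L^0$ is big.
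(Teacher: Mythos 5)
You have correctly identified the overall strategy (iterate the Higgs field, land in $\ker\theta$, invoke weak positivity of the dual of the kernel, then apply a Campana--P\u{a}un criterion), and you have correctly flagged the sticking point: Proposition~\ref{prop_semipositive_kernel} is stated for parabolic divisors supported on the exceptional locus of the resolution, while your subsheaf lives in $\pi_\ast\bigl({_{<D+E}}H\bigr)$ whose parabolic support includes all of $D$. However, the fix you sketch for that point does not work. A section $s\in{_{<D}}H^{p,q}$ with $\theta(s)=0$ is constrained by the $W_0$-membership (Schmid's estimate) only up to logarithmic factors; the parabolic weight contribution $|z|^{\alpha}$ with $\alpha\in(-1,0)$ coming from quasi-unipotent monodromy along $D$ is \emph{not} killed by being in the kernel of the Higgs field. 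Thus $\log|s|_{h_Q}$ need not be locally bounded above at general points of $D$, so the plurisubharmonic extension across $D$ that your fix relies on fails, and you cannot conclude that $(F^k)^\vee$ is weakly positive in the sense of Viehweg. Concretely, the hypothesis ``${\rm supp}(A)\subset\pi^{-1}({\rm Cryt}(\pi))$'' in Proposition~\ref{prop_semipositive_kernel} is not a technicality but is exactly what makes Simpson's norm estimate close the argument, and it is violated in your setup.

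The paper circumvents this cleanly and this is the genuinely new ingredient you are missing: rather than taking $\sL=\sA$ ample, one takes $\sL$ so that $\sL\otimes\sO_S(-D)$ is big (e.g.\ $\sL=\sA\otimes\sO_S(D)$). By the parabolic filtration property ${_{A+D_i}}H={_A}H\otimes\sO(-D_i)$ from Theorem~\ref{thm_parabolic}, twisting the inclusion $\sL\otimes I_Z\subset\pi_\ast\bigl({_{<\pi^{-1}(D)+E}}H^{w,0}\bigr)$ by $\sO_S(-D)$ moves the entire Higgs subsheaf $\bigoplus_p L^p\otimes\sO_S(-D)$ into $\pi_\ast\bigl({_{<E}}H\bigr)$, whose parabolic support is purely exceptional. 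Now the kernel $K$ of the Higgs field inside $\pi_\ast\bigl({_{<E}}H\bigr)$ satisfies the hypothesis of Proposition~\ref{prop_semipositive_kernel} on the nose, giving weak positivity of $K^\vee$, and the iterated Higgs map produces a nonzero morphism $\sL\otimes\sO_S(-D)\otimes I_Z\otimes K^\vee\to\Omega_S^{\otimes n_0}(\log D)$ with big source, forcing $n_0>0$ and bigness of $\omega_S(D)$ by \cite[Theorem 7.11]{CP2019}. Your contradiction argument for the nonvanishing of the first iterate is fine in spirit, but it too presupposes the applicability of Proposition~\ref{prop_semipositive_kernel} and so inherits the same gap; in the paper's version it is replaced by the observation that $\sO_S$ cannot contain a big subsheaf. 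To repair your proof, replace $\sL=\sA$ by $\sL=\sA\otimes\sO_S(D)$ and carry the $\sO_S(-D)$-twist through the iterated Higgs field as above.
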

	\begin{proof}
		Since the claim of the theorem is independent of the choice of the compactification $S^o\subset S$, we may assume that $D$ is a simple normal crossing divisor and the morphism $\xi^o$ extends to a morphism $\xi:S\to M_{\rm slc}(d,\Phi_c,\Gamma,\sigma)$.
		Take a line bundle $\sL$ on $S$ so that $\sL\otimes\sO_S(-D)$ is big.
		Let $$\bigoplus_{p=0}^w L^p\subset \pi_{\ast}\left({_{<\pi^{-1}(D)+E}}H^{w}_{h^o}\right)=\bigoplus_{p=0}^w\pi_{\ast}\left({_{<\pi^{-1}(D)+E}}H^{w-p,p}_{h^o}\right)$$ be the Higgs subsheaf generated by $L^0=\sL\otimes I_Z$ as in Theorem \ref{thm_big_Higgs_sheaf}, where $I_Z$ is an ideal sheaf on $S$ whose co-support $Z$ lies in $D$ and ${\rm codim}_S(Z)\geq 2$. Then $$\sL\otimes\sO_S(-D)\otimes I_Z\subset\pi_{\ast}\left({_{<E}}H^{w,0}_{h^o}\right)$$ and $\bigoplus_{p=0}^w L^p\otimes\sO_S(-D)$ is a log Higgs subsheaf of $\pi_{\ast}\left({_{<E}}H^{w}_{h^o}\right)$ such that
		\begin{align*}
			\theta(L^p\otimes\sO_S(-D))\subset L^{p+1}\otimes\sO_S(-D)\otimes\Omega_S(\log D),\quad\forall p=0,\dots,w-1.
		\end{align*}	
		Consider the diagram
		\begin{align*}
			L^0\otimes\sO_S(-D)\stackrel{\theta}{\to}L^1\otimes\sO_S(-D)\otimes\Omega_S(\log D)\stackrel{\theta\otimes{\rm Id}}{\to}L^2\otimes\sO_S(-D)\otimes\Omega^{\otimes 2}_S(\log D)\to\cdots.
		\end{align*}
		Notice that there is a minimal $n_0\leq w$ such that $L^0\otimes\sO_S(-D)$ is sent into
		$$\ker\left(L^{n_0}\otimes\sO_S(-D)\otimes\Omega^{\otimes n_0}_S(\log D)\to L^{n_0+1}\otimes\sO_S(-D)\otimes\Omega^{\otimes n_0+1}_S(\log D)\right)\subset K\otimes \Omega^{\otimes n_0}_S(\log D)$$
		where 
		$$K=\ker\left(\theta:\pi_{\ast}\left({_{<E}}H^{w}_{h^o}\right)\to \pi_{\ast}\left({_{<E}}H^{w}_{h^o}\otimes\Omega_{\widetilde{S}}(\log \pi^{-1}(D)\cup E)\right)\right).$$
		Since $n_0$ is minimal and $K$ is torsion free, we obtain an inclusion
		\begin{align}
			\sL\otimes\sO_S(-D)\otimes I_Z\subset K\otimes \Omega^{\otimes n_0}_S(\log D).
		\end{align}
		This induces a nonzero morphism
		\begin{align}\label{align_send_L_to_Higgsker}
			\beta:\sL\otimes\sO_S(-D)\otimes I_Z\otimes K^{\vee}\to \Omega^{\otimes n_0}_S(\log D).
		\end{align}
		Since $K\subset \pi_{\ast}\left({_{<E}}H^{w}_{h^o}\right)$,  $K^{\vee}$ is weakly positive by Proposition \ref{prop_semipositive_kernel}. Since $\sL$ and $\sL\otimes I_Z$ are isomorphic in codimension one, $\sL\otimes\sO_S(-D)\otimes I_Z$ is big by assumption.
		Consequently, $\Omega^{\otimes n_0}_S(\log D)$ contains the big sheaf ${\rm Im}(\beta)$, which implies that $n_0>0$. Thus  $\omega_S(D)$ is big by \cite[Theorem 7.11]{CP2019}.
	\end{proof}
	\subsection{Big Picard theorem}
	\subsubsection{DLSZ criterion}
	In this section we review the criterion by Deng-Lu-Sun-Zuo \cite{DLSZ} on the big Picard type result via Finsler pseudometrics.
	\begin{defn}[Finsler metric]
		Let $E$ be a holomorphic vector bundle on a complex manifold
		$X$. A Finsler pseudometric on $E$ is a continuous function $h:E\to[0,\infty)$ such that
		$$h(av)=|a|h(v),\quad \forall a\in \bC, \forall v\in E.$$
		We call $h$ a Finsler metric if it is non-degenerate, that is, if $h(v)=0$ only
		when $v=0$ holds.
	\end{defn}
	\begin{thm}{\cite[Theorem A]{DLSZ}}\label{thm_big_Picard_LSZ}
		Let $(X,\omega)$ be a compact K\"ahler manifold and $D$ a simple normal crossing divisor on $X$. Let $\gamma:\Delta^\ast:=\{z\in\bC\mid 0<|z|<1\}\to X\backslash D$ be a holomorphic
		map. Assume that there is a Finsler pseudometric $h$ on $T_X(-\log D)$ such that $|\frac{\partial}{\partial z}|^2_{\gamma^\ast h}$ is not identically zero and that the following inequality holds in the sense of currents
		$$\partial\dbar\log\left|\frac{\partial}{\partial z}\right|^2_{\gamma^\ast h}\geq\gamma^\ast(\omega).$$
		Then $\gamma$ extends to a holomorphic map $\overline{\gamma}:\Delta\to X$.
	\end{thm}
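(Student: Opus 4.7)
The plan is to combine an Ahlfors--Schwarz-type curvature estimate on $\Delta^\ast$ with a finite-volume, Noguchi-type extension criterion. Set $u(z):=\log|\gamma_\ast(\partial/\partial z)|^2_h$ on $\Delta^\ast$. The hypothesis reads $\partial\dbar u\geq \gamma^\ast\omega$ as currents, so viewing $e^u\, dz\otimes d\bar z$ as a (possibly degenerate) Hermitian pseudometric on $T_{\Delta^\ast}$, its curvature form $-\partial\dbar u$ is dominated by $-\gamma^\ast\omega$; the pseudometric therefore has strictly negative curvature wherever $\gamma^\ast\omega\neq 0$.

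I would first apply an Ahlfors--Schwarz lemma for Finsler pseudometrics, exploiting that $\Delta^\ast$ is hyperbolic with Poincar\'e metric $\omega_P=i\,dz\wedge d\bar z/(|z|^2(\log|z|^2)^2)$ of constant curvature $-1$. The standard comparison yields a pointwise bound $e^{u(z)}\leq C\,|z|^{-2}(\log|z|^{-2})^{-2}$ near $z=0$, equivalently $u(z)\leq -2\log|z|-2\log\log|z|^{-1}+O(1)$ as $z\to 0$. Next, integrating the inequality $\partial\dbar u\geq\gamma^\ast\omega$ on annuli via Jensen's formula converts this upper bound on the circle averages of $u$ into an upper bound on the Nevanlinna characteristic $T_\gamma(r,\omega)$; in particular one deduces $\int_{\Delta^\ast}\gamma^\ast\omega<\infty$, so $\gamma$ has finite area in $(X,\omega)$.

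To promote finite area to extension, I would use crucially that $h$ is a Finsler metric on $T_X(-\log D)$ rather than on $T_X$: this forces a specific logarithmic degeneration of $h$ transversal to $D$. Combined with the Ahlfors--Schwarz bound on $u$, this degeneration prevents the trajectory $\gamma(\Delta^\ast)$ from spiralling around, or clustering on, any component of $D$ as $z\to 0$; hence $\gamma(\Delta^\ast)$ lies in a relatively compact subset of $X$ in a punctured neighborhood of $0$. A Noguchi-type extension lemma for maps of relatively compact image and finite area (in the spirit of Kwack's theorem together with Siu's removable singularity results) then delivers a holomorphic extension $\overline{\gamma}:\Delta\to X$.

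The hardest step, and the main obstacle I anticipate, is the last one: rigorously converting the estimate on the \emph{logarithmic} Finsler metric into the non-clustering behavior of $\gamma$ near $D$. The pointwise bound on $u$ alone only controls the speed of $\gamma$ with respect to $h$, and $h$ vanishes to some order on the logarithmic directions along $D$; one must therefore carefully track how the logarithmic degeneration of $h$ interacts with the Poincar\'e bound to conclude that $\gamma$ cannot have essential singularity-type oscillations, ruling out the Big Picard failure mode and leaving only the case of honest holomorphic extension.
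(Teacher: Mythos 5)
The paper does not give a proof of this theorem; it is imported verbatim as \cite[Theorem A]{DLSZ}, so the comparison is with the argument in that reference. Your high-level plan (curvature estimate $\Rightarrow$ Nevanlinna-type bound $\Rightarrow$ finite area $\Rightarrow$ extension) is the same skeleton as in DLSZ, but two of your steps do not go through as stated.

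First, your Ahlfors--Schwarz step is not directly available. The hypothesis $\partial\dbar u\geq\gamma^{\ast}\omega$ for $u=\log|\gamma_{\ast}(\partial/\partial z)|_{h}^{2}$ only says that the conformal pseudometric $e^{u}\,|dz|^{2}$ has \emph{nonpositive} curvature; it does \emph{not} say the curvature is $\leq -K$ for a fixed $K>0$, because there is no a priori lower bound on $\gamma^{\ast}\omega$ in terms of $e^{u}$. So the pointwise domination by the Poincar\'e metric that you assert, $e^{u(z)}\leq C\,|z|^{-2}(\log|z|^{-2})^{-2}$, does not follow from a routine Ahlfors--Schwarz comparison. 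In DLSZ the passage to finiteness of $\int_{\Delta^{\ast}}\gamma^{\ast}\omega$ is carried out by integrating the current inequality directly against Green kernels on annuli (a Green--Jensen/Nevanlinna computation), using that $u$ is locally the sum of a bounded function and a logarithm (since $h$ is a Finsler \emph{pseudo}metric and $|\gamma_{\ast}(\partial/\partial z)|_{h}^{2}\not\equiv 0$), rather than by a pointwise metric comparison.

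Second, your final step is both unnecessary and misdirected. The theorem asks for extension to a map $\overline{\gamma}:\Delta\to X$, \emph{not} into $X\backslash D$; in fact $\overline{\gamma}(0)$ typically does lie on $D$, so ``preventing $\gamma(\Delta^{\ast})$ from clustering on $D$'' is not only not needed, it is generally false. Also, ``$\gamma(\Delta^{\ast})$ lies in a relatively compact subset of $X$'' is vacuous, since $X$ is compact. Once one has $\int_{\Delta_{r}^{\ast}}\gamma^{\ast}\omega<\infty$ for some $r<1$, the extension to $\Delta\to X$ follows directly from the standard finite-area extension criterion for holomorphic maps from $\Delta^{\ast}$ into a compact K\"ahler manifold (apply Bishop's theorem to the closure of the graph in $\Delta\times X$, then observe that the proper degree-one projection to $\Delta$ forces the fiber over $0$ to be a single point). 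No interaction with the logarithmic degeneration of $h$ is needed at this stage; the role of $T_{X}(-\log D)$ is confined to making the hypothesis achievable, not to the extension step.
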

	\subsubsection{Picard pair}
	Let $X$ be a compact complex space and $Z\subset X$ a closed analytic subset. $(X,Z)$ is called a \emph{Picard pair} if either $0\in\overline{\gamma^{-1}(S\backslash S^o)}$ or $\gamma$ can be extended to a holomorphic map $\overline{\gamma}:\Delta\to S$ for any given holomorphic map $\gamma:\Delta^\ast\to S$ from the punctured unit disc $\Delta^\ast$. In particular, any holomorphic map $\gamma:\Delta^\ast\to S^o$ extends to a holomorphic map $\overline{\gamma}:\Delta\to S$. The classical big Picard theorem is equivalent to that $(\bP^1,\{0,1,\infty\})$ is a Picard pair.
	\begin{lem}\label{lem_Picardpair_modification}
		Let $X$ be a compact complex space and $Z\subset X$ a closed analytic subset. Let $\pi:X'\to X$ be a proper bimeromorphic morphism which is biholomorphic over $X\backslash Z$. Then $(X,Z)$ is a Picard pair if and only if $(X',\pi^{-1}(Z))$ is a Picard pair.
	\end{lem}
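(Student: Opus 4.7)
The plan hinges on the simple set-theoretic identity that for any holomorphic map $\gamma':\Delta^\ast\to X'$ and $\gamma:=\pi\circ\gamma':\Delta^\ast\to X$, one has $\gamma'^{-1}(\pi^{-1}(Z))=\gamma^{-1}(Z)$. This transports the ``$0$ is an accumulation point of the preimage of $Z$'' alternative across $\pi$ in both directions automatically, so that in each implication the only real content is transferring the ``$\gamma$ extends holomorphically'' alternative.

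For the easier direction $(\Leftarrow)$, assume $(X',\pi^{-1}(Z))$ is a Picard pair and let $\gamma:\Delta^\ast\to X$ satisfy $0\notin\overline{\gamma^{-1}(Z)}$. After shrinking $\Delta^\ast$ we may assume $\gamma$ lands in $X\setminus Z$, over which $\pi$ is a biholomorphism; hence $\gamma':=\pi^{-1}\circ\gamma$ defines a lift $\Delta^\ast\to X'\setminus\pi^{-1}(Z)$ whose preimage of $\pi^{-1}(Z)$ is empty. The hypothesis on $(X',\pi^{-1}(Z))$ then forces $\gamma'$ to extend across $0$, and composing with $\pi$ extends $\gamma$.

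For the more delicate direction $(\Rightarrow)$, let $\gamma':\Delta^\ast\to X'$, put $\gamma:=\pi\circ\gamma'$, and apply the Picard-pair hypothesis to $\gamma$; we may assume $\gamma$ extends to $\overline{\gamma}:\Delta\to X$ and must extend $\gamma'$ across $0$. Consider the meromorphic map $\pi^{-1}\circ\overline{\gamma}:\Delta\dashrightarrow X'$, whose indeterminacy locus is an analytic subset of $\Delta$ contained in $\overline{\gamma}^{-1}(Z)$. Since $\gamma'$ is a holomorphic lift of $\overline{\gamma}|_{\Delta^\ast}$, this locus is a proper analytic subset of $\Delta$, hence discrete. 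By the classical fact that a meromorphic map from a smooth one-dimensional disc to a compact complex space is holomorphic at smooth points of the source—equivalently, every proper bimeromorphic modification of a smooth curve is trivial, so the closure of the graph of $\pi^{-1}\circ\overline{\gamma}$ inside $\Delta\times X'$ is an analytic curve whose projection to $\Delta$ is a proper bimeromorphic map, necessarily a biholomorphism after normalization—the meromorphic lift extends to a holomorphic map $\overline{\gamma'}:\Delta\to X'$ with $\overline{\gamma'}|_{\Delta^\ast}=\gamma'$.

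The main technical point is the lifting step in $(\Rightarrow)$ in the subcase $\overline{\gamma}(0)\in Z$, where the fibre $\pi^{-1}(\overline{\gamma}(0))$ can have positive dimension; a priori the limits of $\gamma'(z)$ as $z\to 0$ might not be unique. The essential input that rules this out is the one-dimensionality and smoothness of $\Delta$, via the extension theorem for meromorphic maps cited above; once this is in place, all remaining verifications (compatibility $\pi\circ\overline{\gamma'}=\overline{\gamma}$, agreement with $\gamma'$ on $\Delta^\ast$, and the symmetric treatment of the two cases) are routine.
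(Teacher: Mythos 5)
Your proof is correct and follows essentially the same route as the paper: the easy direction lifts $\gamma$ locally through the biholomorphism over $X\setminus Z$ and applies the Picard hypothesis on $X'$, while the harder direction pushes $\gamma'$ down, extends $\pi\circ\gamma'$ by the Picard hypothesis on $X$, and then lifts the extension back through $\pi$. The one genuine value you add is spelling out why the lift $\pi^{-1}\circ\overline{\gamma}:\Delta\dashrightarrow X'$ is holomorphic at $0$ (the graph-closure/normalization argument for meromorphic maps from a smooth one-dimensional disc into a compact space), a point the paper asserts with only a one-line justification.
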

	\begin{proof}
		Assume that $(X,Z)$ is a Picard pair and $\gamma:\Delta^\ast\to X'$ is a holomorphic map such that $0\notin\overline{\gamma^{-1}(\pi^{-1}(Z))}$. Then $\pi\circ\gamma:\Delta^\ast\to X$ extends to a holomorphic morphism $\overline{\pi\circ\gamma}:\Delta\to X$. Since $\pi$ is biholomorphic over $X\backslash Z$ and there is a neighborhood $U\subset\Delta$ of $0$ such that $\pi\circ\gamma(U\backslash\{0\})\subset X\backslash Z$, the map $\overline{\pi\circ\gamma}:\Delta\to X$ can be lifted to a holomorphic morphism $\Delta\to X'$. This proves that $(X',\pi^{-1}(Z))$ is a Picard pair. 
		
		Conversely, we assume that $(X',\pi^{-1}(Z))$ is a Picard pair. Let $\gamma:\Delta^\ast\to X$ be a holomorphic map such that $0\notin\overline{\gamma^{-1}(Z)}$. Take a neighborhood $U\subset\Delta$ of $0$ such that $\gamma(U\backslash\{0\})\subset X\backslash Z$. Since $\pi$ is biholomorphic over $X\backslash Z$, $\gamma|_{U\backslash\{0\}}$ can be lifted to $\gamma':U\backslash\{0\}\to X'$ such that $0\notin\overline{\gamma'^{-1}(\pi^{-1}(Z))}$. Then it admits a holomorphic extension $\overline{\gamma'}:U\to X'$. Now $\pi\circ\overline{\gamma'}:U\to X$ is a holomorphic extension of $\gamma|_{U\backslash\{0\}}$. This proves that $(X,Z)$ is a Picard pair.
	\end{proof}
	\begin{prop}\label{prop_BPT_BH}
		Let $(X,Z)$ be a Picard pair where $X$ is a compact K\"ahler space and $Z\subset X$ is a closed analytic subset containing $X_{\rm sing}$. Then $X\backslash Z$ is Borel hyperbolic.
	\end{prop}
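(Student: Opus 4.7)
The plan is to derive Borel hyperbolicity of $X\backslash Z$ from the Picard pair property by the standard argument, whose essential content is a meromorphic extension across a simple normal crossing boundary. Given a holomorphic map $\varphi:U\to X\backslash Z$ from a smooth quasi-projective variety $U$, fix a smooth projective compactification $\overline{U}\supset U$ with $D:=\overline{U}\backslash U$ a simple normal crossing divisor, and aim to extend $\varphi$ to a meromorphic map $\overline{\varphi}:\overline{U}\dashrightarrow X$. Since $\overline{U}$ is projective and $X$ is compact K\"ahler, the graph of such an extension is an analytic subset of the compact K\"ahler manifold $\overline{U}\times X$; when $X$ admits an algebraic structure, Chow's theorem makes the extension algebraic, which gives Borel hyperbolicity in the sense used in the paper. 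The assumption $Z\supset X_{\rm sing}$ ensures the target $X\backslash Z$ is smooth, so no singular-target pathology enters.

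The meromorphic extension is built locally near $D$. By Siu's theorem on extension of meromorphic maps into compact K\"ahler spaces across analytic subsets of codimension $\geq 2$, it is enough to produce $\overline{\varphi}$ on the complement of a codimension-two subset of $\overline{U}$. So I may work near a generic smooth point $p$ of a component $D_i\subset D$, choose local coordinates $(z_1,\dots,z_d)$ with $D_i=\{z_1=0\}$, and assume the neighborhood of $p$ is $\Delta\times\Delta^{d-1}$ with $U\cap(\Delta\times\Delta^{d-1})=\Delta^\ast\times\Delta^{d-1}$. For each $w\in\Delta^{d-1}$, the slice $\gamma_w(z_1):=(z_1,w)$ produces an arc $\varphi\circ\gamma_w:\Delta^\ast\to X\backslash Z$. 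Since the image lies entirely in $X\backslash Z$, we have $(\varphi\circ\gamma_w)^{-1}(Z)=\emptyset$, in particular $0\notin\overline{(\varphi\circ\gamma_w)^{-1}(Z)}$. The Picard pair hypothesis on $(X,Z)$ then forces $\varphi\circ\gamma_w$ to extend to a holomorphic map $\Delta\to X$ for every $w$.

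The remaining step is to assemble these pointwise extensions into a single meromorphic map on $\Delta\times\Delta^{d-1}$. For this I invoke a Noguchi-type extension theorem: a holomorphic map $\Delta^\ast\times\Delta^{d-1}\to X$ into a compact K\"ahler manifold whose restriction to each slice $\Delta^\ast\times\{w\}$ extends across $0$ extends to a meromorphic map on $\Delta\times\Delta^{d-1}$. Equivalently, one shows that the closure in $\Delta\times\Delta^{d-1}\times X$ of the graph of $\varphi$ is an analytic set, using Bishop--Remmert--Stein type statements; the slice-wise extensions rule out verticality of limit points at generic fibers, while compactness of $X$ controls escape to infinity. Combining this local meromorphic extension at every generic point of every boundary component with Siu's codimension-two extension yields the global $\overline{\varphi}:\overline{U}\dashrightarrow X$, and hence Borel hyperbolicity.

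The main obstacle is this final passage from slice-wise extension to meromorphic extension on the polydisk. The first reduction (Siu) and the concluding GAGA/Chow step are routine and depend only on $X$ being compact K\"ahler (respectively, on the algebraic structure of the image in the application), whereas the Noguchi-style patching is the technical core and is what genuinely uses the full strength of the Picard pair hypothesis through its validity on \emph{every} transverse disk, not just on a single one.
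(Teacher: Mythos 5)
Your outline is, at its core, the proof of \cite[Corollary C]{DLSZ}, which is exactly what the paper cites for this step; so the route is the same, and you are unpacking the citation rather than giving a genuinely different argument. The unpacking is correct in broad strokes: compactify the algebraic source with snc boundary, extend $\varphi$ slice-by-slice across generic boundary points using the Picard pair hypothesis, assemble these slice extensions into a meromorphic map via a Noguchi-type patching, pass across codimension two by Siu, and conclude algebraicity. As you say, the patching step carries the real content.

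What you omit, however, is the paper's first move: the reduction, via Lemma~\ref{lem_Picardpair_modification}, to the case where $X$ is a compact K\"ahler \emph{manifold} and $Z$ is a simple normal crossing divisor. The proposition allows $X$ to be an arbitrary compact K\"ahler \emph{space}, and $X_{\rm sing}\subset Z$ only makes the source $X\backslash Z$ smooth, not the target $X$ of your meromorphic extension. You nevertheless treat $X$ as smooth in several places: you call $\overline{U}\times X$ a ``compact K\"ahler manifold,'' you state the Noguchi-type theorem for a compact K\"ahler manifold target, and the finite-area removable-singularity and Bishop-type arguments you allude to are in their standard form only for manifold targets, as is Siu's codimension-two extension theorem. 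The paper resolves this at the outset by invoking Lemma~\ref{lem_Picardpair_modification}, which shows the Picard pair property is preserved under proper modifications biholomorphic off $Z$; one then replaces $(X,Z)$ by a resolution with smooth K\"ahler total space and snc boundary and applies \cite{DLSZ}. Your argument needs this reduction made explicit before the analytic machinery can be used as quoted. (Separately, the concluding Chow/GAGA step needs projectivity of the target, not just K\"ahlerness, so it should be tied to the quasi-projectivity of $X\backslash Z$ in the paper's applications rather than to the K\"ahler hypothesis.)
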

	\begin{proof}
		By Lemma \ref{lem_Picardpair_modification}, one can take a desingularization of the pair $(X,Z)$ and assume that $X$ is a compact K\"ahler manifold and $Z$ is a simple normal crossing divisor. For the remainder of the proof, see \cite[Corollary C]{DLSZ}.
	\end{proof}
	\subsubsection{Finsler metrics associated with the analytic prolongations of Viehweg-Zuo Higgs sheaves}\label{section_VZ_Finsler_metric}
	We follow the ideas in \cite{DLSZ} and make a little refinement by using non-canonical prolongations. 
	
	Let $f:(X^o,B^o),A^o\to S^o$ be a weakly admissible family of $(d,\Phi_c,\Gamma,\sigma)$-lc stable minimal models over a quasi-projective smooth variety $S^o$ which determines a generically finite morphism $\xi^o:S^o\to M_{\rm lc}(d,\Phi_c,\Gamma,\sigma)$. Let $S$ be a smooth projective compactification of $S^o$ so that $D:=S\backslash S^o$ is a simple normal crossing divisor and $\xi^o$ can be extended to a morphism $\xi:S\to M_{\rm lc}(d,\Phi_c,\Gamma,\sigma)$.
	Let $\sL$ be an ample line bundle on $S$. Let $$\bigoplus_{p=0}^w L^p\subset \pi_{\ast}\left({_{<\pi^{-1}(D)+E}}H^{w}_{h^o}\right):=\bigoplus_{p=0}^w\pi_{\ast}\left({_{<\pi^{-1}(D)+E}}H^{w-p,p}_{h^o}\right)$$ be the Higgs subsheaf generated by $L^0=\sL\otimes\sO_S(D)\otimes I_Z$ as in Theorem \ref{thm_big_Higgs_sheaf}, where $I_Z$ is some coherent ideal sheaf on $S$ whose co-support $Z$ lies in $D$ and ${\rm codim}_S(Z)\geq2$. Then $$\sL\otimes I_Z\subset\pi_{\ast}\left({_{<E}}H^{w,0}_{h^o}\right)$$ and $\bigoplus_{p=0}^w L^p\otimes\sO_S(-D)$ is a log Higgs subsheaf of $\pi_{\ast}\left({_{<E}}H^{w}_{h^o}\right)$ such that
	\begin{align*}
		\theta(L^p\otimes\sO_S(-D))\subset L^{p+1}\otimes\sO_S(-D)\otimes\Omega_S(\log D),\quad\forall p=0,\dots,w-1.
	\end{align*}	
	Consider the diagram
	\begin{align*}
		L^0\otimes\sO_S(-D)\stackrel{\theta}{\to}L^1\otimes\sO_S(-D)\otimes\Omega_S(\log D)\stackrel{\theta\otimes{\rm Id}}{\to}L^2\otimes\sO_S(-D)\otimes\Omega^{\otimes 2}_S(\log D)\to\cdots.
	\end{align*}
	This induces a map
	$$\tau_p:\sL\otimes I_Z \subset\pi_{\ast}\left({_{<E}}H^{w,0}_{h^o}\right)\to\pi_{\ast}\left({_{<E}}H^{w-p,p}_{h^o}\right)\otimes\Omega^{\otimes p}_S(\log D)$$ for every $p=0,\dots,\dim S$. 
	This induces a map
	\begin{align}\label{align_first_theta}
		\rho_p:T^{\otimes p}_S(-\log D)\to \sL^{-1}\otimes \pi_{\ast}\left({_{<E}}H^{w-p,p}_{h^o}\right).
	\end{align}
	The following lemma is essentially due to Deng \cite{Deng2022}, while we make a mild modification in order to adapt the result to the context of analytic prolongations.
	\begin{lem}\label{lem_rho1_generic_inj}
		$\rho_1:T_S(-\log D)\to \sL^{-1}\otimes \pi_{\ast}\left({_{<E}}H^{w-1,1}_{h^o}\right)$ is generically injective.
	\end{lem}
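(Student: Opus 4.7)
The plan is to verify generic injectivity on a suitable Zariski open subset $S^\circ \subset S^o$ on which all the constructions of Sections \ref{section_setting}--\ref{section_VZ_sheaf} degenerate to their smooth algebraic counterparts, and then to adapt the infinitesimal argument of Deng \cite{Deng2022} to the analytic prolongation setting.

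First I would choose $S^\circ \subset S^o$ so that $\pi$ is an isomorphism over $S^\circ$, both $E$ and $Z$ miss $S^\circ$, and the classifying map $\xi^o$ is quasi-finite on $S^\circ$ (possible because $\xi^o$ is generically finite). Since the source and target of $\rho_1$ are torsion free, it is enough to check injectivity at a general point $s \in S^\circ$. Over $S^\circ$ the inclusion $L^0 \hookrightarrow \pi_\ast({_{<E}}H^{w,0}_{h^o})$ reduces to the concrete inclusion $\sL|_{S^\circ} \hookrightarrow h^o_\ast \omega_{Z'^o/S^\circ}$ produced in \S\ref{section_VZ_sheaf}, and the Higgs field $\theta$ on $H^{w,0}_{h^o}$ is the usual Kodaira--Spencer derivation; consequently the fiberwise value is
$$\rho_1(\xi)(s) \;=\; \mathrm{KS}_{h^o}(\xi) \smile \sigma_s \;\in\; \sL_s^{-1} \otimes H^{1}(Z'^o_s, \Omega^{n-1}_{Z'^o_s}),$$
where $\mathrm{KS}_{h^o}: T_{S,s}(-\log D) \to H^1(Z'^o_s, T_{Z'^o_s})$ is the Kodaira--Spencer map of the smooth family $h^o$ and $\sigma_s \in H^0(Z'^o_s, \omega_{Z'^o_s})$ is the image of a trivializer of $\sL_s$ under the chain of maps constructed in the proof of Theorem \ref{thm_VZ_prolongation}.

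Next I would split injectivity into two statements: (i) $\mathrm{KS}_{h^o}$ is injective at a general $s \in S^\circ$; (ii) cup product with $\sigma_s$ is injective on $\mathrm{Im}(\mathrm{KS}_{h^o}(s))$. For (i), the chain of operations producing $h^o$ from $f^o$ (Kawamata ramified cover, $r$-fold main-component fiber product, cyclic $k$-cover branched along $\{\widetilde s = 0\}$, and functorial desingularizations) is étale or birational on the base, so injectivity of the moduli Kodaira--Spencer map, which follows from the quasi-finiteness of $\xi^o$ on $S^\circ$, transfers to injectivity of $\mathrm{KS}_{h^o}$ at a generic $s$. For (ii) I would invoke the Viehweg--Zuo design of the cyclic cover: the branch divisor $\{\widetilde s=0\}$ is tailored to the section $s_L$ of $\sL$, so that the tautological section $\sigma_s$ of $\omega_{Z'^o_s}$ detects every nonzero Kodaira--Spencer class via cup product, exactly as in \cite[\S 3]{Deng2022}.

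The main obstacle is step (ii), the fiberwise non-degeneracy of cup product with $\sigma_s$ on the relevant subspace of $H^1(Z'^o_s, T_{Z'^o_s})$. This is precisely what the Viehweg--Zuo cyclic-cover construction was designed to guarantee: iterating the Higgs field starting from $L^0 = \sL$ propagates non-trivially through all Hodge types on the generic fiber, and a first-order obstruction computation (as in \cite{Deng2022}) shows that the first iterate already sees every infinitesimal direction. Since $s \in S^\circ$ is a smooth point disjoint from $D \cup E$, the analytic prolongation ${_{<E}}H^{n-1,1}_{h^o}$ coincides with $H^{n-1,1}_{h^o}$ in a neighborhood of $s$, so Deng's argument applies verbatim and yields the claim.
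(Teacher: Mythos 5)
The paper's own ``proof'' of Lemma~\ref{lem_rho1_generic_inj} is a one-line deferral: it reads ``See \cite{Deng2022}. Notice that $h_Q$ is bounded near the boundary $D$.'' Your proposal is a plausible reconstruction of Deng's argument, and the pointwise identity $\rho_1(\xi)(s) = \mathrm{KS}_{h^o}(\xi)\smile\sigma_s$ is correct (the Higgs field of a system of Hodge bundles is the Kodaira--Spencer derivation), as is the reduction to a general point in $S^o$ where the $_{<E}$-prolongation is invisible. However two points deserve attention. First, your justification of step (i) conflates the base with the total space: the Kawamata cover, the $klr$-fold fiber product, and the cyclic cover are fiberwise operations producing a genuinely different family $h^o$, not a birational or \'etale modification of $f^o$ over $S^o$. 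Injectivity of $\mathrm{KS}_{h^o}$ does not ``transfer'' automatically from quasi-finiteness of $\xi^o$; what one actually has is a factorization of $\rho_1$ through the twisted Kodaira--Spencer map of the family $Y^o\to S^o$ (the map $\vartheta$ in (\ref{align_vartheta_nolog})) composed with the comparison morphism $\iota_{X^o}$ of Lemma~\ref{lem_VZ_Xo}, and it is injectivity of that first factor, plus non-degeneracy of $\iota$ on its image, that carries the argument. Second, and more seriously, step (ii) (``cup product with $\sigma_s$ is injective on $\mathrm{Im}(\mathrm{KS}_{h^o}(s))$'') is a strong fiberwise Torelli-type statement that you attribute to Deng without pointing to where or how it is established; this is precisely the hard part of the lemma, and Deng's actual route does not isolate it as a self-contained claim. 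Your decomposition into (i) $+$ (ii) is logically sufficient if both hold, and it is implied by Deng's result, but as a proof plan it leaves the substantive content exactly where the paper leaves it --- in \cite{Deng2022} --- while adding intermediate claims that are neither proved nor obviously how the cited argument is organized.
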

	\begin{proof}
		See \cite{Deng2022}. Notice that the metric $h_Q$ on $\pi_{\ast}\left({_{<E}}H^{w-1,1}_{h^o}\right)$ is bounded near the boundary $D$.
	\end{proof}
	Let $U_0\subset S\backslash D$ be a dense Zariski open subset such that $\rho_1$ is injective on $U_0$. Let $\gamma:\Delta^\ast\to S\backslash D$ be a holomorphic map such that ${\rm Im}(\gamma)\cap U_0\neq\emptyset$.
	By Lemma (\ref{lem_rho1_generic_inj}), the natural map
	$$\tau_{\gamma,1}:\gamma^\ast (\sL\otimes I_Z)\stackrel{\gamma^\ast\tau_1}{\to} \gamma^\ast\pi_{\ast}\left({_{<E}}H^{w-1,1}_{h^o}\right)\otimes\gamma^\ast\Omega_S(\log D)\stackrel{{\rm Id}\otimes d\gamma}{\to}\gamma^\ast\pi_{\ast}\left({_{<E}}H^{w-1,1}_{h^o}\right)\otimes\Omega_{\Delta^\ast}$$
	is nonzero. Hence
	there is a minimal integer $1\leq n_\gamma\leq \dim S$ such that 
	$$\tau_{\gamma,n_\gamma}:\gamma^\ast (\sL\otimes I_Z)\stackrel{\gamma^\ast\tau_{n_\gamma}}{\to} \gamma^\ast\pi_{\ast}\left({_{<E}}H^{w-n_\gamma,n_\gamma}_{h^o}\right)\otimes\gamma^\ast\Omega^{\otimes n_\gamma}_S(\log D)\stackrel{{\rm Id}\otimes d\gamma}{\to}\gamma^\ast\pi_{\ast}\left({_{<E}}H^{w-n_\gamma,n_\gamma}_{h^o}\right)\otimes\Omega^{\otimes n_\gamma}_{\Delta^\ast}$$
	is non-zero, whereas the composition
	$$\gamma^\ast (\sL\otimes I_Z) \stackrel{\tau_{\gamma,n_\gamma}}{\to}\gamma^\ast\pi_{\ast}\left({_{<E}}H^{w-n_\gamma,n_\gamma}_{h^o}\right)\otimes\Omega^{\otimes n_\gamma}_{\Delta^\ast}\to\gamma^\ast\pi_{\ast}\left({_{<E}}H^{w-n_\gamma-1,n_\gamma+1}_{h^o}\right)\otimes\Omega^{\otimes (n_\gamma+1)}_{\Delta^\ast}$$
	is zero. Then one has a non-zero map
	$$\sL\otimes I_Z\to \pi_{\ast}\left({_{<E}}H^{w-n_\gamma,n_\gamma}_{h^o}\right)\otimes S^{n_\gamma}\Omega_S(\log D).$$
	This induces a map 
	\begin{align}
		\iota_{n_\gamma}:  T^{\otimes n_\gamma}_S(-\log D)\to \sL^{-1}\otimes \pi_{\ast}\left({_{<E}}H^{w-n_\gamma,n_\gamma}_{h^o}\right).
	\end{align}
	Let $h_\sL$ be a hermitian metric on $\sL$ with positive curvature form and let $h_Q$ be the Hodge metric which is regarded as a singular hermitian metric on $\pi_{\ast}\left({_{<E}}H^{w-n_\gamma,n_\gamma}_{h^o}\right)$. Then the pullback $h_\gamma:=\iota_{n_\gamma}^\ast(h_\sL^{-1}h_Q)$ induces a Finsler pseudometric on $T_S(-\log D)$ by
	$$|v|_{h_\gamma}:=|\iota_{n_\gamma}(v^{\otimes n_\gamma})|^{\frac{1}{n_\gamma}}_{h_\sL^{-1}h_Q},\quad v\in T_S(-\log D).$$ The following proposition is essentially due to Deng-Lu-Sun-Zuo \cite{DLSZ}.
	\begin{prop}
		Notations as above. Then $|\frac{\partial}{\partial z}|^2_{\gamma^\ast h_\gamma}$ is not identically zero and the following inequality holds in the sense of currents
		$$\partial\dbar\log\left|\frac{\partial}{\partial z}\right|^2_{\gamma^\ast h_\gamma}\geq\frac{1}{n}\gamma^\ast(\Theta_{h_\sL}(\sL)).$$
	\end{prop}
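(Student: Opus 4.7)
The plan is to carry out the standard Finsler metric curvature calculation of Deng-Lu-Sun-Zuo adapted to the analytic prolongation setup above. The proof splits naturally into the non-vanishing statement and the pointwise (or current) curvature inequality on $\Delta^\ast$.

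For the non-vanishing, I would simply unwind the construction: $n_\gamma$ was chosen as the minimal integer for which the composition $\tau_{\gamma,n_\gamma}$ fails to vanish on $\Delta^\ast$, so by adjunction $u:=\iota_{n_\gamma}\!\bigl(\gamma_\ast(\partial/\partial z)^{\otimes n_\gamma}\bigr)$ is a holomorphic section of $\gamma^\ast\bigl(\sL^{-1}\otimes\pi_\ast({_{<E}}H^{w-n_\gamma,n_\gamma}_{h^o})\bigr)$ which is nonzero on a Zariski dense open subset of $\Delta^\ast$. By definition of $h_\gamma$, $|\gamma_\ast(\partial/\partial z)|^2_{h_\gamma}=|u|^{2/n_\gamma}_{h_\sL^{-1}h_Q}\not\equiv 0$.

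For the curvature inequality, the key observation is that the minimality of $n_\gamma$ forces $\theta\circ\iota_{n_\gamma}=0$, so $u$ in fact takes values in $\gamma^\ast(\sL^{-1}\otimes K)$, where
$$K:=\ker\Bigl(\theta\colon\pi_\ast({_{<E}}H^{w-n_\gamma,n_\gamma}_{h^o})\to\pi_\ast({_{<E}}H^{w-n_\gamma-1,n_\gamma+1}_{h^o})\otimes\Omega_S(\log D)\Bigr).$$
Since $\gamma(\Delta^\ast)\subset S\backslash D$, the Hodge metric $h_Q$ is smooth along $\gamma$, and Simpson's identity $\Theta_{h_Q}(H)+\theta\wedge\theta^\ast+\theta^\ast\wedge\theta=0$ combined with the second fundamental form formula for the subbundle $K$ shows that $(K,h_Q|_K)$ is Griffiths semi-negatively curved --- this is precisely the mechanism behind Proposition \ref{prop_semipositive_kernel}. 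Trivialising $\sL^{-1}$ locally by a holomorphic frame $e^{-1}$ and writing $u=e^{-1}\otimes v$ for a section $v$ of $\gamma^\ast K$ gives $|u|^2_{h_\sL^{-1}h_Q}=h_\sL^{-1}\cdot|v|^2_{h_Q}$, so
$$\ddbar\log|u|^2_{h_\sL^{-1}h_Q}=\gamma^\ast\Theta_{h_\sL}(\sL)+\ddbar\log|v|^2_{h_Q}\geq\gamma^\ast\Theta_{h_\sL}(\sL),$$
where the last step is Poincar\'e--Lelong: the curvature contribution $-\{\Theta_{h_Q}(K)v,v\}/|v|^2_{h_Q}$ is pointwise non-negative by Griffiths semi-negativity, and the zero set of $v$ contributes extra positive mass to the current. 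Dividing by $n_\gamma$ and using $n_\gamma\leq n$ yields the asserted inequality.

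The main subtlety will be ensuring that the computation is rigorous across the locus where $\gamma(\Delta^\ast)$ meets the exceptional divisor $\pi(E)$, where the coherent sheaf $\pi_\ast({_{<E}}H^{p,q}_{h^o})$ need not be locally free and $h_Q$ could be singular. Since $\pi(E)$ has codimension at least two in $S$, a generic disc $\gamma$ avoids it; in the remaining case, $\gamma^{-1}(\pi(E))\subset\Delta^\ast$ is discrete, and the current inequality extends across these points by a standard Hartogs/pluripotential argument, using that $\log|v|^2_{h_Q}$ remains locally bounded above (as in the proof of Proposition \ref{prop_semipositive_kernel}). A cleaner alternative would be to lift $\gamma$ to the desingularisation $\widetilde{X}$ where the Hodge bundle is genuinely smooth, perform the curvature computation there, and push the resulting current inequality back down via $\pi$.
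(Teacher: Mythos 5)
Your proof follows essentially the same route as the paper's: nonvanishing from the minimality of $n_\gamma$, Poincar\'e--Lelong, and Griffiths' curvature formula combined with the kernel property $\theta_\gamma(u)=0$. Two small points of precision. First, minimality of $n_\gamma$ only gives $\gamma^\ast\theta(u)=0$, i.e.\ $u\in\ker(\theta_\gamma)$ on $\Delta^\ast$; this is \emph{not} the same as $u$ being a section of $\gamma^\ast K$, the pullback to $\Delta^\ast$ of the kernel sheaf $K$ on $S$ (the kernel of a pullback is in general larger than the pullback of the kernel). Consequently appealing to the weak-positivity mechanism of Proposition~\ref{prop_semipositive_kernel}, which concerns the sheaf $K$ on $S$, is not the right citation here; the correct and simpler move --- and the one the paper makes --- is to apply Griffiths' identity $\gamma^\ast\Theta_{h_Q}+\theta_\gamma\wedge\overline{\theta_\gamma}+\overline{\theta_\gamma}\wedge\theta_\gamma=0$ pointwise along $\gamma$ and read off $\{\gamma^\ast\Theta_{h_Q}\,u,u\}=-\{\theta_\gamma\wedge\overline{\theta_\gamma}\,u,u\}\le 0$ directly from $\theta_\gamma(u)=0$. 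Second, the paper organizes the estimate via curvature decrease for the saturated image $N$ of $d\gamma$ inside $\gamma^\ast T_S(-\log D)$, producing $-\Theta_{\gamma^\ast h_\gamma}(T_{\Delta^\ast})+R$ by Poincar\'e--Lelong with $R\ge 0$ the ramification divisor, and then bounds $\Theta_{\gamma^\ast h_\gamma}(N)$ through $\iota_{n_\gamma}$; your Lelong-for-sections formulation with $u=e^{-1}\otimes v$ is equivalent and arguably cleaner. Your concern about where $\gamma$ meets the image of the codimension-two modification locus (where $\pi_\ast({_{<E}}H^{p,q}_{h^o})$ is not locally free and $h_Q$ could degenerate) is legitimate and is not spelled out in the paper; the discreteness-plus-extension remedy you suggest, or lifting to the desingularization, does resolve it.
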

	\begin{proof}
		Notice that $\gamma^\ast h_\gamma$ is a (possibly degenerate) smooth hermitian metric on $T_{\Delta^\ast}$. The first claim follows from the fact that $\tau_{\gamma,n_\gamma}$ is non-zero.
		By the Poincar\'e-Lelong equation, one has
		$$\partial\dbar\log\left|\frac{\partial}{\partial z}\right|^2_{\gamma^\ast h_\gamma}=-\Theta_{\gamma^\ast h_\gamma}(T_{\Delta^\ast})+R$$
		where $R$ is the ramification divisor of $\gamma$. Let  $N$ denote the saturation of the image of $d\gamma: T_{\Delta^\ast}\to\gamma^\ast T_S(-\log D)$. Then one knows that 
		\begin{align*}
			\Theta_{\gamma^\ast h_\gamma}(T_{\Delta^\ast})&\leq \Theta_{\gamma^\ast h_\gamma}(N)=\frac{1}{n_{\gamma}}\Theta_{\gamma^\ast h^{n_{\gamma}}_\gamma}(N^{\otimes n_{\gamma}})\\\nonumber
			&\leq \frac{1}{n_{\gamma}}\gamma^\ast\Theta_{ h^{n_{\gamma}}_\gamma}\left(T^{\otimes n_{\gamma}}_S(-\log D)\right)|_{N^{\otimes n_\gamma}}\\\nonumber
			&\leq \frac{1}{n_{\gamma}}\gamma^\ast\Theta_{h_\sL^{-1}h_Q}\left(\sL^{-1}\otimes \pi_{\ast}\left({_{<E}}H^{w-n_\gamma,n_\gamma}_{h^o}\right)\right)\big|_{\gamma^\ast(\iota_{n_\gamma}(N^{\otimes n_\gamma}))}\\\nonumber
			&=-\frac{1}{n_{\gamma}}\gamma^\ast\Theta_{h_\sL}(\sL)+\frac{1}{n_{\gamma}}\gamma^\ast\Theta_{h_Q}\left( \pi_{\ast}\left({_{<E}}H^{w-n_\gamma,n_\gamma}_{h^o}\right)\right)\big|_{\gamma^\ast(\iota_{n_\gamma}(N^{\otimes n_\gamma}))}
		\end{align*}
		as $(1,1)$-forms on $\Delta^\ast$.
		By the definition of $n_\gamma$, one sees that $\gamma^\ast(\iota_{n_\gamma}(N^{\otimes n_\gamma}))$ lies in the kernel of the Higgs field 
		$$\theta_\gamma:\gamma^\ast\pi_{\ast}\left({_{<E}}H^{w-n_\gamma,n_\gamma}_{h^o}\right)\to \gamma^\ast\pi_{\ast}\left({_{<E}}H^{w-n_\gamma-1,n_\gamma+1}_{h^o}\right)\otimes\Omega_{\Delta^\ast}$$
		of the Higgs sheaf $\gamma^\ast\pi_{\ast}\left({_{<E}}H^{w}_{h^o}\right)$.
		By Griffiths' curvature formula
		$$\gamma^\ast\Theta_{h_Q}(H^w_{h^o})+\theta_{\gamma}\wedge\overline{\theta_{\gamma}}+\overline{\theta_{\gamma}}\wedge\theta_{\gamma}=0,$$
		one gets that
		$$\gamma^\ast\Theta_{h_Q}\left( \pi_{\ast}\left({_{<E}}H^{w-n_\gamma,n_\gamma}_{h^o}\right)\right)\big|_{\gamma^\ast(\iota_{n_\gamma}(N^{\otimes n_\gamma}))}=-\theta_{\gamma}\wedge\overline{\theta_{\gamma}}|_{\gamma^\ast(\iota_{n_\gamma}(N^{\otimes n_\gamma}))}\leq0.$$ 
		Combining the formulas above, the proposition is proved.
	\end{proof}
	\subsubsection{Admissible families and big Picard theorem}
	\begin{thm}\label{thm_main_proof}
		Let $f^o:(X^o,B^o),A^o\to S^o$ be an admissible family of $(d,\Phi_c,\Gamma,\sigma)$-lc stable minimal models over a smooth quasi-projective variety $S^o$ which defines a quasi-finite morphism $\xi^o:S^o\to M_{\rm lc}(d,\Phi_c,\Gamma,\sigma)$.
		Let $S$ be a projective variety containing $S^o$ as a Zariski open subset. Then $(S,S\backslash S^o)$ is a Picard pair. Consquently, $S^o$ is Borel hyperbolic.
	\end{thm}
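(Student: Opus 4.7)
The plan is to deduce the Picard pair property from the DLSZ criterion (Theorem \ref{thm_big_Picard_LSZ}) applied to the Finsler pseudometrics of \S\ref{section_VZ_Finsler_metric}, combined with an induction on $\dim S^o$ to handle discs whose image lies in the degeneracy locus where the Finsler construction becomes trivial. As a first reduction, Lemma \ref{lem_Picardpair_modification} together with a functorial resolution of singularities allows us to assume $S$ is a smooth projective variety, $D := S \setminus S^o$ is a reduced simple normal crossing divisor, and $\xi^o$ extends to a morphism $\xi: S \to M_{\rm slc}(d,\Phi_c,\Gamma,\sigma)$. Fix $\gamma: \Delta^\ast \to S$ with $0 \notin \overline{\gamma^{-1}(D)}$; after shrinking the domain, $\gamma(\Delta^\ast) \subset S^o$, and the task is to holomorphically extend $\gamma$ across $0$.

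For the main case, fix an ample line bundle $\sL$ on $S$ and apply Theorem \ref{thm_big_Higgs_sheaf} to produce the system of Viehweg-Zuo Higgs sheaves $(\bigoplus_{p=0}^{w} L^p, \theta)$ on a suitable modification of $S$. Running the construction of \S\ref{section_VZ_Finsler_metric} yields a Finsler pseudometric $h_\gamma$ on $T_S(-\log D)$ that satisfies the curvature inequality
\[
\partial \bar{\partial} \log \left|\gamma_\ast\!\left(\tfrac{\partial}{\partial z}\right)\right|_{h_\gamma}^{2} \;\geq\; \tfrac{1}{n_\gamma}\, \gamma^\ast\!\left(\Theta_{h_\sL}(\sL)\right).
\]
By Lemma \ref{lem_rho1_generic_inj} the map $\rho_1$ is injective on a dense Zariski open subset $U_0 \subset S^o$. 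If $\gamma(\Delta^\ast) \cap U_0 \neq \emptyset$, then $|\gamma_\ast(\partial/\partial z)|_{h_\gamma}^{2}$ is not identically zero, and Theorem \ref{thm_big_Picard_LSZ} extends $\gamma$ holomorphically across $0$.

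If instead $\gamma(\Delta^\ast)$ is contained in the proper Zariski closed subset $S^o \setminus U_0$, I would proceed by induction on $\dim S^o$. Let $W$ be an irreducible component of the Zariski closure of $\gamma(\Delta^\ast)$ through which $\gamma$ factors, and let $\widetilde{W} \to W$ be a resolution with $\widetilde{W}^o$ the preimage of $W \cap S^o$. The pullback of $f^o$ to $\widetilde{W}^o$ should remain an admissible family of lc stable minimal models after a further log resolution of the pulled-back log smooth birational model, and the classifying map $\widetilde{W}^o \to M_{\rm lc}(d,\Phi_c,\Gamma,\sigma)$ is quasi-finite as the composition of the birational $\widetilde{W} \to W$ with the restriction of the quasi-finite $\xi^o$. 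Since $\dim \widetilde{W} < \dim S$, the inductive hypothesis applied to a smooth projective compactification of $\widetilde{W}^o$ extends $\gamma$. The base case $\dim S^o = 1$ follows from Viehweg hyperbolicity (Theorem \ref{thm_VH}): $\omega_S(D)$ has positive degree on the curve $S$, so $S^o$ has negative Euler characteristic, and the big Picard theorem on such curves is classical. Finally, Borel hyperbolicity of $S^o$ is a formal consequence of the Picard pair property by Proposition \ref{prop_BPT_BH}.

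The chief obstacle will lie in the inductive step: one must verify that the restriction of an admissible family of lc stable minimal models to (a suitable resolution of) a subvariety of $S^o$ remains admissible in the sense of Definition \ref{defn_admissible}, which in particular requires constructing a log smooth birational model for the restricted family whose boundary coefficients still lie in $(0,1)$. Once this compatibility of admissibility with restriction to subvarieties is established, the DLSZ curvature-plus-extension machinery and the Higgs-theoretic input from Theorem \ref{thm_big_Higgs_sheaf} assemble into the argument in a standard way.
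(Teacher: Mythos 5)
Your proposal follows the same overall strategy as the paper: reduce to $S$ smooth and $D$ simple normal crossing, shrink $\Delta$ so that $\gamma(\Delta^\ast)\subset S^o$, and combine the Finsler pseudometric construction of \S\ref{section_VZ_Finsler_metric} with Theorem \ref{thm_big_Picard_LSZ}, handling a non-dense image by passing to the Zariski closure of $\gamma(\Delta^\ast)$. The paper performs this reduction in a single step; you phrase it as an induction on $\dim S^o$.

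Your inductive step, as stated, has a gap. You restrict $f^o$ to a resolution $\widetilde{W}\to W$ of the Zariski closure $W$ of $\gamma(\Delta^\ast)$ and claim that $\widetilde{W}^o\to M_{\rm lc}(d,\Phi_c,\Gamma,\sigma)$ is quasi-finite ``as the composition of the birational $\widetilde{W}\to W$ with the restriction of the quasi-finite $\xi^o$''. A proper birational morphism need not be quasi-finite: if $W$ has singular points inside $S^o$, the resolution has positive-dimensional exceptional fibers over $W_{\rm sing}\cap S^o$ which the classifying map then contracts, so the composite is only generically finite. Since your inductive hypothesis is the theorem itself, which requires quasi-finiteness, it does not apply. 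Shrinking $\widetilde{W}^o$ to $\pi^{-1}(W_{\rm reg}\cap S^o)$ restores quasi-finiteness, but then $\gamma$ may accumulate on the enlarged boundary near $0$, collapsing the Picard-pair dichotomy to the trivial alternative.

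The paper sidesteps this precisely by not recursing into the theorem: once reduced to the boxed assumption that $\mathrm{Im}(\gamma)$ is Zariski dense, the Finsler machinery applies outright, because Theorem \ref{thm_big_Higgs_sheaf} and Lemma \ref{lem_rho1_generic_inj} only require the classifying map to be generically finite, and density of $\mathrm{Im}(\gamma)$ guarantees it meets the open locus $U_0$ where $\rho_1$ is injective. Replacing your appeal to the inductive hypothesis with a direct appeal to the Finsler argument on $\widetilde{W}$ repairs the proposal. Your ``chief obstacle'' --- admissibility under pullback to a smooth subvariety --- is correctly flagged but is in fact benign: a log smooth family base-changed along a morphism from a smooth base remains log smooth, with unchanged boundary coefficients.
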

	\begin{proof}
		Let $\gamma:\Delta^\ast\to S$ be a holomorphic map such that $0\notin\overline{\gamma^{-1}(S\backslash S^o)}$. We are going to show that $\gamma$ extends holomorphically to $0\in\Delta$. By shrinking $\Delta$ we may assume that ${\rm Im}(\gamma)\subset S^o$.
		Take $B\subset S$ to be the Zariski closure of ${\rm Im}(\gamma)$. Let $\pi:B'\to B$ be a desingularization so that $\pi$ is biholomorphic over $B_{\rm reg}\cap S^o$ and $\pi^{-1}(B\backslash(B_{\rm reg}\cap S^o))$ is a simple normal crossing divisor on $B'$. Since $\pi$ is a proper map, $\gamma$ can be lifted to $\gamma':\Delta^\ast\to \pi^{-1}(B\cap S^o)$. It suffices to show that $\gamma'$ extends to a holomorphic map $\overline{\gamma'}:\Delta\to B'$. Taking the base change of $f^o$ via $\pi^{-1}(B\cap S^o)\to S^o$, we may assume the following without loss of generality.
		\begin{framed}
			$S$ is smooth, $D:=S\backslash S^o$ is a simple normal crossing divisor on $S$ and  $\gamma:\Delta^\ast\to S\backslash D$ is a holomorphic map such that ${\rm Im}(\gamma)$ is Zariski dense in $S$.
		\end{framed}
		We are going to show that $\gamma$ extends to a holomorphic map $\overline{\gamma}:\Delta\to S$.   
		By \S \ref{section_VZ_Finsler_metric} there is a Finsler pseudometric $h_\gamma$ on $T_S(-\log D)$ such that $|\frac{\partial}{\partial z}|^2_{\gamma^\ast h_\gamma}$ is not identically zero and that the following inequality holds in the sense of currents
		$$\partial\dbar\log\left|\frac{\partial}{\partial z}\right|^2_{\gamma^\ast h_\gamma}\geq\gamma^\ast(\omega_S),$$
		where $\omega_S$ is a hermitian metric on $S$. Thus $\gamma$ extends to a holomorphic map $\overline{\gamma}:\Delta\to S$ by Theorem \ref{thm_big_Picard_LSZ}. This proves that $(S,S\backslash S^o)$ is a Picard pair. The claim that $S^o$ is Borel hyperbolic follows from Proposition \ref{prop_BPT_BH}.
	\end{proof}
	
	\subsection{Pseudo Kobayashi hyperbolicity}
	Theorem \ref{thm_big_Higgs_sheaf}, combined with the method in \cite{Deng2022}, implies the pseudo Kobayashi hyperbolicity of the base of a weakly admissible family of lc stable minimal models. For the basic notions of pseudo Kobayashi hyperbolicity, the reader is referred to \cite{Deng2022}.
	\begin{thm}
		Let $f^o:(X^o,B^o), A^o\to S^o$ be a weakly admissible family of $(d,\Phi_c,\Gamma,\sigma)$-lc stable minimal models over a smooth quasi-projective variety $S^o$ which defines a generically finite morphism $\xi^o:S^o\to M_{\rm lc}(d,\Phi_c,\Gamma,\sigma)$. Then $S^o$ is pseudo Kobayashi hyperbolic.
	\end{thm}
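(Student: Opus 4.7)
The plan is to follow Deng's strategy \cite{Deng2022}, using the Viehweg–Zuo type Higgs sheaf of Theorem \ref{thm_big_Higgs_sheaf} to construct a single global Finsler pseudometric on $T_S(-\log D)$, where $S$ is a smooth projective compactification of $S^o$ with $D=S\setminus S^o$ a simple normal crossing divisor, whose holomorphic sectional curvature is bounded above by a negative constant along all holomorphic discs into $S^o$. Once such a metric is in hand, a standard application of the Ahlfors–Schwarz lemma (as in \cite[Proposition 3.3]{Deng2022}) gives the pseudo Kobayashi hyperbolicity, with exceptional set equal to the degeneracy locus of the metric.

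Concretely, I would first fix an ample line bundle $\sL$ on $S$ equipped with a smooth Hermitian metric $h_\sL$ of positive curvature, and apply Theorem \ref{thm_big_Higgs_sheaf} to produce the log Higgs subsheaf $(\bigoplus_{p=0}^w L^p,\theta)\subset \pi_\ast\big({_{<\pi^{-1}(D)+E}}H^w_{h^o}\big)$ generated by $L^0=\sL\otimes I_Z$. Iterating the Higgs field exactly as in \S\ref{section_VZ_Finsler_metric} yields, for each $1\le p\le w$, a sheaf morphism
\begin{equation*}
\iota_p\colon S^p\bigl(T_S(-\log D)\bigr)\longrightarrow \sL^{-1}\otimes \pi_\ast\big({_{<E}}H^{w-p,p}_{h^o}\big),
\end{equation*}
and by Lemma \ref{lem_rho1_generic_inj} the map $\iota_1$ is generically injective. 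I then define Finsler pseudometrics $F_p$ on $T_S(-\log D)$ by $F_p(v)^{2p}=|\iota_p(v^{\otimes p})|^2_{h_\sL^{-1}h_Q}$, and assemble them into a global pseudometric $F^2=\sum_{p=1}^w \alpha_p F_p^2$ with positive weights $\alpha_p>0$ to be determined. By construction $F$ is non-degenerate on a dense Zariski open subset $S_0\subset S^o$, so the degeneracy locus $Z:=S^o\setminus S_0$ is a proper algebraic subset.

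The heart of the argument is the curvature estimate: one must choose the $\alpha_p$ so that for every holomorphic $\gamma\colon\Delta\to S^o$ with $F(\gamma'(0))\ne 0$,
\begin{equation*}
\partial\bar\partial \log F(\gamma'(z))^2 \;\ge\; c\,\gamma^\ast\omega_S
\end{equation*}
holds in the sense of currents, for a fixed constant $c>0$ and a fixed Hermitian metric $\omega_S$ on $S$. This is obtained from Griffiths' identity $\Theta_{h_Q}(H)+\theta\wedge\bar\theta+\bar\theta\wedge\theta=0$ applied level by level: the negative Griffiths contribution at level $p$ is absorbed by the strictly positive Higgs-field contribution appearing at level $p+1$, while the positivity of $h_\sL$ provides the missing positive term at the last non-vanishing level $n_\gamma$. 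A telescoping argument identical to the one in Deng \cite[\S 4]{Deng2022} (compare also \cite{VZ2003, Schumacher2018}) then yields admissible weights $\alpha_p$, and the Ahlfors–Schwarz lemma finishes the proof. The main technical obstacle will be to carry the curvature computations through the analytic prolongations $\pi_\ast\big({_{<E}}H^{p,q}_{h^o}\big)$: along the exceptional divisor $E$ and across $\pi^{-1}(D)$ the Hodge metric $h_Q$ is genuinely singular, so one must verify, using the norm estimates of Theorem \ref{thm_Hodge_metric_asymptotic} and the log regularity of $\theta$ (Theorem \ref{thm_parabolic}), that all curvature currents extend across these loci with at most integrable singularities, so that the local differential-geometric estimate globalizes. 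Once this is established, the combinatorial choice of weights is identical to the canonically polarized case treated in \cite{Deng2022}.
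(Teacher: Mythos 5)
Your proposal follows essentially the same route as the paper: both construct Deng's Finsler pseudometric from the iterated Higgs field maps $\rho_p$ (your $\iota_p$), combine the pieces $F_p$ with positive weights $\alpha_p$, invoke the generic injectivity of Lemma \ref{lem_rho1_generic_inj} for non-degeneracy, and derive the curvature estimate from Griffiths' identity before applying an Ahlfors--Schwarz argument (\cite[Lemma 2.4]{Deng2022}). The paper's proof is essentially a citation to \cite{Deng2022}, while your sketch fills in the same construction explicitly; note that the "technical obstacle" you flag is milder than you suggest, since for pseudo Kobayashi hyperbolicity the curvature estimate is only needed along discs mapping into $S^o$, where the Hodge metric $h_Q$ on the prolonged Higgs sheaf is smooth, so no delicate analysis of singular extensions across $D$ or $E$ is required.
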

	\begin{proof}
		The arguments in \cite{Deng2022} is applicable to our case. According to \cite[\S 2.3]{Deng2022} and (\ref{align_first_theta}), one obtains a Finsler pseudometric
		$$F:=\left(\sum_{k=1}^wk\alpha_k\rho_k^\ast(h_{\sL}^{-1}h_Q)^{\frac{2}{k}}\right)^{\frac{1}{2}},$$
		which is positive definite on some Zariski open subset of $S^o$ and is bounded from above by a negative constant for some $\alpha_1,\dots,\alpha_w>0$. Consequently, $S^o$ is pseudo Kobayashi hyperbolic by \cite[Lemma 2.4]{Deng2022}.	
	\end{proof}
	\subsection{Brody hyperbolicity}
	\begin{thm}\label{thm_main_Brody_proof}
		Let $f^o:(X^o,B^o), A^o\to S^o$ be an admissible family of $(d,\Phi_c,\Gamma,\sigma)$-lc stable minimal models over a smooth quasi-projective variety $S^o$ which defines a quasi-finite morphism $\xi^o:S^o\to M_{\rm lc}(d,\Phi_c,\Gamma,\sigma)$.
		Then $S^o$ is Brody hyperbolic.
	\end{thm}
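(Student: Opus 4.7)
The plan is to proceed by contradiction, mimicking the strategy used for the big Picard theorem (Theorem \ref{thm_main_proof}), with the punctured disc $\Delta^\ast$ replaced by $\bC$ and the extension statement replaced by an Ahlfors--Schwarz type estimate. Suppose for contradiction that there exists a non-constant holomorphic map $\gamma:\bC\to S^o$. Let $B\subset S$ be the Zariski closure of $\gamma(\bC)$, and let $\pi:B'\to B$ be a desingularization biholomorphic over $B_{\rm reg}\cap S^o$ such that $\pi^{-1}(B\setminus(B_{\rm reg}\cap S^o))$ is a simple normal crossing divisor in $B'$. Since $\gamma^{-1}(B_{\rm sing}\cup(B\setminus S^o))$ is a discrete subset of $\bC$, $\gamma$ lifts to a holomorphic map $\gamma':\bC\to B'\setminus \pi^{-1}(B\setminus S^o)$. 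Pulling the admissible family $f^o$ back along $B'\cap\pi^{-1}(B\cap S^o)\to S^o$ yields again an admissible family, and the quasi-finiteness of $\xi^o$ is inherited. Thus we may assume $S$ is smooth projective, $D=S\setminus S^o$ is a simple normal crossing divisor, and $\gamma(\bC)$ is Zariski dense in $S$.

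In this reduced setting, apply Theorem \ref{thm_big_Higgs_sheaf} with an ample line bundle $\sL$ on $S$ and carry out the Finsler metric construction of \S \ref{section_VZ_Finsler_metric}. This produces a Finsler pseudometric $h_\gamma$ on $T_S(-\log D)$ such that
\begin{align*}
\left|\gamma_\ast\!\left(\tfrac{\partial}{\partial z}\right)\right|^2_{h_\gamma}\not\equiv 0\quad\textrm{on }\bC,
\end{align*}
and, in the sense of currents,
\begin{align*}
\partial\bar\partial\log\left|\gamma_\ast\!\left(\tfrac{\partial}{\partial z}\right)\right|^2_{h_\gamma}\geq \tfrac{1}{n}\,\gamma^\ast\Theta_{h_\sL}(\sL).
\end{align*}
The non-vanishing crucially uses two ingredients: the generic injectivity of $\rho_1$ (Lemma \ref{lem_rho1_generic_inj}), which provides a Zariski open $U_0\subset S\setminus D$ on which the Finsler construction is non-degenerate, and the Zariski density of $\gamma(\bC)$, which guarantees $\gamma^{-1}(U_0)\neq\emptyset$. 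Here the quasi-finiteness hypothesis on $\xi^o$ (stronger than generic finiteness) is what ensures, after the base change in the previous paragraph, that the reduced family still yields a generically finite classifying map and thus Theorem \ref{thm_big_Higgs_sheaf} applies.

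We now derive a contradiction via Ahlfors--Schwarz. For each $R>0$, restrict the inequality above to $\Delta_R\subset\bC$ and compare $u(z):=|\gamma_\ast(\partial/\partial z)|^2_{h_\gamma}$ with the Poincar\'e metric of $\Delta_R$. A standard maximum principle argument (applied to $u\cdot(R^2-|z|^2)^2$, which vanishes on $\partial\Delta_R$) yields a uniform bound of the form $\gamma^\ast\Theta_{h_\sL}(\sL)\leq C\,\omega_{\Delta_R}^{\rm Poinc}$ on $\Delta_{R/2}$. Letting $R\to\infty$ the right-hand side tends to zero on any fixed compact set, forcing $\gamma^\ast\Theta_{h_\sL}(\sL)\equiv 0$ on $\bC$. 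Since $\Theta_{h_\sL}(\sL)$ is a positive $(1,1)$-form on $S$ and $\gamma(\bC)$ is Zariski dense, this forces $d\gamma\equiv 0$, contradicting the non-constancy of $\gamma$. Hence no such $\gamma$ exists, and $S^o$ is Brody hyperbolic.

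The main obstacle is to ensure that the Finsler pseudometric $h_\gamma$ does not degenerate along the image of $\gamma$. Without the quasi-finiteness of $\xi^o$ (only generic finiteness) one merely obtains pseudo Kobayashi hyperbolicity modulo a proper Zariski closed subset $Z\subset S^o$, and entire curves may a priori lie in $Z$. Under the quasi-finiteness hypothesis, the reduction to a Zariski-dense image in the first paragraph makes Lemma \ref{lem_rho1_generic_inj} applicable directly, so the Finsler pseudometric becomes non-degenerate along $\gamma$ and the Ahlfors--Schwarz argument closes cleanly. This point---propagating quasi-finiteness through the Zariski closure and desingularization step so that the hypotheses of Theorem \ref{thm_big_Higgs_sheaf} remain intact on the reduced family---is the technical heart of the proof.
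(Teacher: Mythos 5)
The paper proves Brody hyperbolicity by a short deduction from results already established earlier in Section 5: Theorem \ref{thm_main_proof} (Borel hyperbolicity, a consequence of the big Picard theorem) shows that any holomorphic $\gamma:\bC\to S^o$ extends to an algebraic morphism $\overline{\gamma}:\bP^1\to S$; pulling back the family to $\bC$ and invoking Theorem \ref{thm_VH} (Viehweg hyperbolicity), since $\bC$ is not of log general type, the classifying map $\xi^o\circ\gamma$ is constant; quasi-finiteness of $\xi^o$ then forces $\gamma$ to be constant. Your proposal instead attempts a direct Ahlfors--Schwarz argument on $\bC$, which is a genuinely different route, but as written it has a gap at the decisive step.

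The gap is in the Ahlfors--Schwarz step. You have the curvature inequality
$\partial\bar\partial\log u\geq \tfrac{1}{n}\gamma^\ast\Theta_{h_\sL}(\sL)$
for $u=|\gamma_\ast(\partial/\partial z)|^2_{h_\gamma}$, together with $u\not\equiv 0$. This does \emph{not} suffice to conclude $\gamma^\ast\Theta_{h_\sL}(\sL)\equiv 0$. The classical Ahlfors--Schwarz maximum principle (the device $u\cdot(R^2-|z|^2)^2$ that you invoke) gives a Poincar\'e-type \emph{upper} bound on $u$ precisely when the right-hand side is a positive multiple of $u$ itself, i.e.\ $\partial\bar\partial\log u\geq c\,u$; it does not apply when the lower bound is the pullback of a Kähler form, and the interior-maximum inequality you would get only holds at the maximum point, not on a fixed subdisc. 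A toy obstruction: on $\bC$, with $u(z)=e^{|z|^2}$ and $\gamma^\ast\omega$ the Euclidean form, one has $\partial\bar\partial\log u\geq\gamma^\ast\omega$ with $u\not\equiv 0$ and yet $\gamma^\ast\omega\neq 0$. To run a Liouville/Ahlfors--Schwarz argument one needs extra input --- e.g.\ that $u$ is bounded above on $\bC$, equivalently that the Finsler pseudometric $h_\gamma$ is dominated by a fixed (smooth) Finsler metric on $T_S(-\log D)$ and that $\gamma$ is a Brody curve for that metric. You do not establish this, and it is not automatic: $h_\gamma$ is built from the Hodge metric $h_Q$ on a prolongation $\pi_\ast({_{<E}}H^{\dots})$, and controlling it along a curve that accumulates at $D$ requires additional work that the paper deliberately sidesteps. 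Note that the hypothesis of Theorem \ref{thm_big_Picard_LSZ}, which you are implicitly relying on, produces the \emph{big Picard} conclusion for maps from $\Delta^\ast$ (holomorphic extension across the puncture), not a Brody-type conclusion for entire curves; these are not interchangeable.

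Two further points. First, the reduction to a Zariski-dense curve and the propagation of quasi-finiteness are fine and mirror the argument in the big Picard proof, but they do not by themselves remove the analytic obstacle above. Second, the paper's actual route also more cleanly explains why quasi-finiteness (not just generic finiteness) is needed: it is used at the very end to pass from ``the composition $\xi^o\circ\gamma$ is constant'' to ``$\gamma$ is constant,'' rather than to control the degeneracy locus of a Finsler metric. If you wish to pursue a direct Finsler-metric proof of Brody hyperbolicity, you should either bound $u$ from above (and then Liouville on the subharmonic function $\log u$ gives the conclusion) or replace the curvature lower bound by one of the form $\partial\bar\partial\log u\geq c\,u$, as in the pseudo-Kobayashi argument following Deng; in the latter case you would then still need to rule out that $\gamma$ lands in the degeneracy locus.
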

	\begin{proof}
		Assume that there is a holomorphic map $h:\bC\to S^o$. By Theorem \ref{thm_main_proof}, $h$ can be extended to an algebraic morphism $\overline{h}:\bP^1\to S$. Taking the base change of the family $f^o$ via the map $h$, we obtain an admissible (algebraic) family $f^o|_{\bC}$ of $(d,\Phi_c,\Gamma,\sigma)$-lc stable minimal models over $\bC$. Since $\bC$ is not of log general type, Theorem \ref{thm_VH} implies that the classifying map $\xi^o\circ h$ of $f^o|_{\bC}$ must be constant. Consequently, $h$ is constant due to the quasi-finiteness of $\xi^o$. This proves the theorem.
	\end{proof}
	\subsection{Stratified hyperbolicity}
	Based on the results in the previous sections, we introduce the following notion.
	\begin{defn}
		Let $f:(X,B),A\to S$ be a family of lc stable minimal models over a quasi-projective variety $S$ such that the coefficients of $B$ lie in $[0,1)$. An \emph{admissible stratification} of $S$ with respect to $f$ is a filtration of Zariski closed subsets
		$$\emptyset=S_{-1}\subset S_0\subset\cdots\subset S_d=S$$
		satisfying the following conditions:
		\begin{itemize}
			\item $d=\dim S$,
			\item  $S_i\backslash S_{i-1}$ is a (possibly disconnected) smooth variety  of dimension $i$ for each $i=0,\dots,d$,
			\item each $\overline{S_i}\backslash S_i$ is a disjoint union of strata of $\{S_i\}$, and
			\item the pullback family of $f$ over $S_i\backslash S_{i-1}$ is admissible for each $i=0,\dots,d$.
		\end{itemize}
	\end{defn}
	The following lemma shows that an admissible stratification always exists.
	\begin{lem}
		Let $f:(X,B),A\to S$ be a family of lc stable minimal models over a quasi-projective variety $S$ such that the coefficients of $B$ lie in $[0,1)$. Then there is a dense Zariski open subset $U\subset S_{\rm reg}$ such that $f|_U$ is admissible.
	\end{lem}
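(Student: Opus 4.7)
The plan is to construct $U$ in successive steps, securing the defining conditions of admissibility one at a time: smoothness of the base, existence of a log smooth birational model, and the coefficient condition on $B$.

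First, since $S$ is a variety, the regular locus $S_{\rm reg}$ is a dense Zariski open subset. By generic flatness applied to $f$, we may shrink further to a dense open $U_1 \subset S_{\rm reg}$ over which $f$ is flat.

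Next, take a log resolution $\mu:X' \to X$ of the pair $(X, {\rm supp}(A+B))$, so that $X'$ is smooth, $\mu$ is a projective birational morphism, and $\mu^{-1}({\rm supp}(A+B)) \cup {\rm Exc}(\mu)$ has simple normal crossings. Define $\Delta' := \mu_{*}^{-1}(A+B)$, the strict transform, and set $g := \mu^{-1}:X' \dashrightarrow X$; then $g_{*}\Delta' = A+B$ as required in Definition \ref{defn_admissible}. To make the induced family $(X'|_{(f\mu)^{-1}(U)}, \Delta'|_{(f\mu)^{-1}(U)}) \to U$ log smooth, we need $f \circ \mu$ smooth over $U$ and every stratum of $\Delta'$ smooth over $U$. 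Generic smoothness applied to $X' \to S$, and to each of the finitely many strata of $\Delta'$ (considered as subvarieties of $X'$ mapped to $S$), guarantees these conditions over dense Zariski opens. Intersection with $U_1$ then yields a dense open $U_2 \subset U_1$ over which a log smooth birational model of $f|_{U_2}$ exists.

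Finally, write $B = \sum_i b_i B_i$ with prime $B_i$ and $b_i \in c\bZ^{\geq 0} \cap (0,1]$. For each $i$ with $b_i = 1$ such that $B_i$ does not dominate $S$, the image $f(B_i)$ is a proper closed subset of $S$; removing the (finite) union of these images from $U_2$ leaves a dense open. Any coefficient-$1$ component dominating $S$ would force the generic fiber $(X_s, B_s)$ to be non-klt along $B_{i,s}$, and is excluded after a further dense open restriction via openness of the klt locus inside $\sM_{\rm lc}$. The resulting dense open $U$ satisfies all three conditions of admissibility. The main obstacle is precisely this coefficient step: handling hypothetical coefficient-$1$ components of $B$ that could dominate $S$; the log resolution and generic smoothness ingredients are routine.
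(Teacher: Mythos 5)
Your core construction (take a log resolution of $(X,A+B)$ and invoke generic smoothness to obtain a dense open $U\subset S_{\rm reg}$ over which the resolved family is log smooth) is exactly the paper's proof; that part is correct and matches. The added generic flatness step is harmless but not needed.

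However, your final paragraph contains a genuine gap, and it concerns precisely the part you flag as the main obstacle. You propose to dispose of a coefficient-$1$ component $B_i$ of $B$ that dominates $S$ by observing that the generic fiber $(X_s,B_s)$ would then be non-klt along $B_{i,s}$, and then ``excluding'' this via openness of the klt locus inside $\sM_{\rm lc}$. This does not work. The stack $\sM_{\rm lc}$ parametrizes \emph{lc} stable minimal models, not klt ones; a fiber $(X_s,B_s)$ with a reduced boundary component is perfectly allowed, so there is nothing to exclude. More importantly, if $B_i$ has coefficient $1$ and dominates $S$, then \emph{every} dense open $U\subset S$ still meets $f(B_i)$, so $B|_{f^{-1}(U)}$ retains that coefficient-$1$ component; no Zariski shrinking of the base can remove a horizontal component of $B$. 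Openness of klt-ness only says the klt locus is open in $S$; it does not say it is nonempty or dense, and in the situation you describe it would typically be empty. So your argument neither excludes the bad case nor salvages the statement when the bad case occurs.

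For comparison: the paper's own one-line proof also does not touch the coefficient condition at all; it only verifies the existence of a log smooth birational model over a dense open. The coefficient requirement ``$B$ has coefficients in $(0,1)$'' is effectively treated as a standing hypothesis in the one place the lemma is used (Theorem \ref{thm_stratified_hyperbolic} explicitly assumes it). The correct reaction, had you wanted to engage with this point, would have been to observe that the coefficient condition is not a shrinking-of-$U$ statement at all (it holds for the given $B$ or it does not) and to note that the lemma is implicitly invoked under that hypothesis, rather than to attempt a klt-openness argument which cannot succeed.
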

	\begin{proof}
		Take a log resolution $\pi:X'\to X$ of the pair $(X,\operatorname{supp}(A+B))$. By generic smoothness, there is a dense Zariski open subset $U\subset S_{\rm reg}$ such that $(X',\pi^{\ast}(A+B))$ is a log smooth family over $U$ and $\pi|_{X'_s}:X'_s\rightarrow X_s$ is a birational morphism for every $s\in U(\bC)$. This proves the lemma.
	\end{proof}
	The following theorem is a direct consequence of Theorem \ref{thm_main}.
	\begin{thm}\label{thm_stratified_hyperbolic}
		Let $f:(X,B),A\to S$ be a family of $(d,\Phi_c,\Gamma,\sigma)$-lc stable minimal models over a quasi-projective variety $S$ such that the coefficients of $B$ lie in $[0,1)$. Assume that $f$ induces a quasi-finite morphism $S\to M_{\rm lc}(d,\Phi_c,\Gamma,\sigma)$. Let $$\emptyset=S_{-1}\subset S_0\subset\cdots\subset S_{\dim S}=S$$ be an admissible stratification of $S$ with respect to $f$. Then the following hold for each $i=0,\dots,\dim S$. 
		\begin{itemize}
			\item\emph{(big Picard theorem)} $(S_i,S_{i-1})$ is a Picard pair.
			\item\emph{(Borel hyperbolicity)} Any holomorphic map from an algebraic variety to $S_i\backslash S_{i-1}$ is algebraic. 
			\item\emph{(Viehweg hyperbolicity)} $S_i\backslash S_{i-1}$ is of log general type.
			\item\emph{(pseudo Kobayashi hyperbolicity)} $S_i\backslash S_{i-1}$ is Kobayashi hyperbolic away from a proper Zariski closed subset.
			\item\emph{(Brody hyperbolicity)} $S_i\backslash S_{i-1}$ is Brody hyperbolic.
		\end{itemize}
	\end{thm}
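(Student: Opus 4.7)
The plan is to apply Theorem \ref{thm_main} stratum by stratum, noting that admissibility of the restricted families is built into the definition of admissible stratification. Concretely, fix $i \in \{0,\dots,\dim S\}$ and let $f_i$ denote the pullback of $f$ over the locally closed stratum $S_i \setminus S_{i-1}$. By the last bullet in the definition of an admissible stratification, $f_i$ is an admissible family of $(d,\Phi_c,\Gamma,\sigma)$-lc stable minimal models over the smooth quasi-projective variety $S_i \setminus S_{i-1}$. Since the classifying map $\xi : S \to M_{\rm lc}(d,\Phi_c,\Gamma,\sigma)$ is quasi-finite by hypothesis, so is its restriction $\xi_i := \xi|_{S_i \setminus S_{i-1}}$; hence all the hypotheses of Theorem \ref{thm_main} are satisfied by $f_i$ for every $i$.

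The Borel hyperbolicity, pseudo Kobayashi hyperbolicity, and Brody hyperbolicity of $S_i \setminus S_{i-1}$ are then verbatim consequences of the corresponding items of Theorem \ref{thm_main} applied to $f_i$. For the log-general-type statement, I would invoke Hironaka to pick a smooth projective compactification $T_i \supset S_i \setminus S_{i-1}$ with boundary $T_i \setminus (S_i \setminus S_{i-1})$ a reduced simple normal crossing divisor, and then apply the Viehweg hyperbolicity bullet of Theorem \ref{thm_main} to conclude that $\omega_{T_i}(T_i \setminus (S_i \setminus S_{i-1}))$ is big, i.e. $S_i \setminus S_{i-1}$ is of log general type.

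The only bullet requiring slightly more bookkeeping is the big Picard theorem, for which Theorem \ref{thm_main} is phrased using a compact ambient space. I would fix once and for all a projective compactification $\overline{S}$ of the quasi-projective $S$, take $\overline{S_i}$ to be the closure of $S_i$ in $\overline{S}$, and observe that $S_i \setminus S_{i-1}$ sits inside $\overline{S_i}$ as a Zariski open subset. Theorem \ref{thm_main} then yields that $(\overline{S_i},\, \overline{S_i} \setminus (S_i \setminus S_{i-1}))$ is a Picard pair; since $S_{i-1} \subseteq \overline{S_i} \setminus (S_i \setminus S_{i-1})$, this immediately implies the stratified statement that $(S_i, S_{i-1})$ is a Picard pair in the sense of Theorem \ref{thm_main}.

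There is no genuine obstacle to this argument: the notion of admissible stratification has been engineered precisely so that the restricted family over each stratum satisfies the hypotheses of Theorem \ref{thm_main}, and the whole proof reduces to this observation together with one application of resolution of singularities (to produce the compactification $T_i$) and one fixed choice of projective compactification of $S$ (to produce $\overline{S_i}$).
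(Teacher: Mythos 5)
Your proposal is correct and takes exactly the same approach as the paper, which records this theorem as a direct consequence of Theorem \ref{thm_main} applied stratum by stratum: the definition of an admissible stratification guarantees the restricted families are admissible, and quasi-finiteness of the classifying map passes to each locally closed stratum. You have merely written out the routine compactification bookkeeping that the paper leaves implicit; note only that the paper's phrase ``$(S_i,S_{i-1})$ is a Picard pair'' should be read, as you do, via a projective compactification $\overline{S_i}$ of $S_i\setminus S_{i-1}$ (since the paper's Definition of a Picard pair requires the ambient space to be compact), and the extension $\overline{\gamma}$ lands in $\overline{S_i}$ rather than in $S_i$ itself.
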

	\bibliographystyle{plain}
	\bibliography{Hyper_Moduli}
\end{document}